\newtheorem{theorem}{Theorem}
\newtheorem{proposition}[theorem]{Proposition}
\newtheorem{corollary}[theorem]{Corollary}
\newtheorem{lemma}[theorem]{Lemma}
\theoremstyle{definition}
\newtheorem{definition}[theorem]{Definition}
\newtheorem{example}[theorem]{Example}
\newtheorem{remark}[theorem]{Remark}
\newtheorem{observation}[theorem]{Observation}
\newcommand{\laa}{\mathfrak{a}}
\newcommand{\lab}{\mathfrak{b}}
\newcommand{\lac}{\mathfrak{c}}
\newcommand{\lae}{\mathfrak{e}}
\newcommand{\laf}{\mathfrak{f}}
\newcommand{\lag}{\mathfrak{g}}
\newcommand{\lah}{\mathfrak{h}}
\newcommand{\lai}{\mathfrak{i}}
\newcommand{\laj}{\mathfrak{j}}
\newcommand{\lal}{\mathfrak{l}}
\newcommand{\lap}{\mathfrak{p}}
\newcommand{\laq}{\mathfrak{q}}
\newcommand{\las}{\mathfrak{s}}
\newcommand{\lat}{\mathfrak{t}}
\newcommand{\laz}{\mathfrak{z}}
\newcommand{\lagl}{\mathfrak{gl}}
\newcommand{\lasl}{\mathfrak{sl}}
\newcommand{\laso}{\mathfrak{so}}
\newcommand{\lasp}{\mathfrak{sp}}
\DeclareMathOperator{\ess}{ess}
\DeclareMathOperator{\diag}{diag}
\DeclareMathOperator{\bbC}{\mathbb{C}}
\DeclareMathOperator{\bbZ}{\mathbb{Z}}
\DeclareMathOperator{\GL}{GL}
\DeclareMathOperator{\SL}{SL}
\DeclareMathOperator{\adj}{ad}
\DeclareMathOperator{\Hom}{Hom}
\DeclareMathOperator{\reg}{\mathrm{reg}}
\newcommand{\ssslash}{{/\!\!/\!\!/}}
\newcommand{\dslash}{{/\!\!/}}
\title[Hyperk\"ahler slices and spherical geometry]{An application of spherical geometry to \\ hyperk\"ahler slices}
\author[Peter Crooks]{Peter Crooks}
\address[Crooks]{Department of Mathematics, Northeastern University, 360 Huntington Ave., Boston, MA 02115, USA}
\email{~~~peter.d.crooks@gmail.com}
\author[Maarten van Pruijssen]{Maarten van Pruijssen}
\address[van Pruijssen]{Institut f\"ur Mathematik, University of Paderborn, Warburger Str. 100, 33098 Paderborn, Germany}
\email{~~~vanpruijssen@math.upb.de}
\subjclass[2010]{20G20 (primary); 53C26, 14M17 (secondary)}
\keywords{hyperk\"ahler quotient, Slodowy slice, spherical geometry}
\begin{document}

\maketitle

\begin{abstract}
This work is concerned with Bielawski's hyperk\"ahler slices in the cotangent bundles of homogeneous affine varieties. One can associate such a slice to the data of a complex semisimple Lie group $G$, a reductive subgroup $H\subseteq G$, and a Slodowy slice $S\subseteq\mathfrak{g}:=\mathrm{Lie}(G)$, defining it to be the hyperk\"ahler quotient of $T^*(G/H)\times (G\times S)$ by a maximal compact subgroup of $G$. This hyperk\"ahler slice is empty in some of the most elementary cases (e.g. when $S$ is regular and $(G,H)=(\SL_{n+1},\GL_{n})$, $n\geq 3$), prompting us to seek necessary and sufficient conditions for non-emptiness.

We give a spherical-geometric characterization of the non-empty hyperk\"ahler slices that arise when $S=S_{\text{reg}}$ is a regular Slodowy slice, proving that non-emptiness is equivalent to the so-called $\mathfrak{a}$\textit{-regularity} of $(G,H)$. This $\mathfrak{a}$-regularity condition is formulated in several equivalent ways, one being a concrete condition on the rank and complexity of $G/H$. We also provide a classification of the $\laa$-regular pairs $(G,H)$ in which $H$ is a reductive spherical subgroup. Our arguments make essential use of Knop's results on moment map images and Losev's algorithm for computing Cartan spaces.
\end{abstract}

\tableofcontents

\section{Introduction}
\subsection{Context}
A smooth manifold is called \textit{hyperk\"ahler} if it comes equipped with three K\"ahler structures that determine the same Riemannian metric, and whose underlying complex structures satisfy certain quaternionic identities. Such manifolds are known to be holomorphic symplectic and Calabi--Yau, and they are ubiquitous in modern algebraic and symplectic geometry. Prominent examples include the cotangent bundles \cite{Kronheimer} and (co-)adjoint orbits \cite{Kovalev,KronheimerNilpotent,KronheimerSemisimple,Biquard} of complex semisimple Lie groups, moduli spaces of Higgs bundles over compact Riemann surfaces \cite{HitchinSelf}, and Nakajima quiver varieties \cite{Nakajima1,Nakajima2}. Many examples arise via the \textit{hyperk\"ahler quotient} construction\cite{Hitchin}, an analogue of symplectic reduction for a hyperk\"ahler manifold endowed with a structure-preserving Lie group action and a hyperk\"ahler moment map. However, one always has the preliminary problem of determining whether the given hyperk\"ahler quotient is non-empty.    

While the above-described \textit{emptiness problem} is likely intractable in the generality described above, one might hope to solve it for particular classes of hyperk\"ahler quotients. It is in this context that one might consider Bielawski's \textit{hyperk\"ahler slices}\cite{Bielawski,BielawskiComplex}, which require fixing a compact, connected, semisimple Lie group $K$ with complexification $G:=K_{\mathbb{C}}$. Each $\mathfrak{sl}_2$-triple $\tau=(\xi,h,\eta)$ in $\mathfrak{g}:=\mathrm{Lie}(G)$ determines a Slodowy slice $S_{\tau}:=\xi+\mathrm{ker}(\adj_{\eta})\subseteq\mathfrak{g}$, and hence also an affine variety $G\times S_{\tau}$. This variety is a hyperk\"ahler manifold carrying a tri-Hamiltonian action of $K$, and its symplectic geometry is reasonably well-studied (see \cite{Bielawski, CrooksRayan, AbeCrooks, CrooksBulletin}). Now suppose that $K$ acts in a tri-Hamiltonian fashion on a hyperk\"ahler manifold $M$, and that this action extends to a holomorphic, Hamiltonian $G$-action with respect to the holomorphic symplectic structure on $M$. The \textit{hyperk\"ahler slice} for $M$ and $\tau$ is then defined to be $(M\times (G\times S_{\tau}))\ssslash K$, the hyperk\"ahler quotient of $M\times (G\times S_{\tau})$ by $K$. Several well-known hyperk\"ahler manifolds are realizable as hyperk\"ahler slices, as discussed in the introduction of \cite{BielawskiComplex}.  

In light of the preceding discussion, one might consider the following special case of the emptiness problem: classify those pairs $(M,\tau)$ for which the hyperk\"ahler slice $(M\times (G\times S_{\tau}))\ssslash K$ is non-empty. An initial objection is that no particular assumptions have been made about $M$ and $\tau$, so that this problem likely remains too general to be tractable. We thus note that the best studied Slodowy slices are those associated to \textit{regular} $\mathfrak{sl}_2$-triples $\tau$ (see \cite{Kostant}), i.e. those $\tau=(\xi,h,\eta)$ for which $\xi$ is a regular element of $\mathfrak{g}$. At the same time, some of the best understood hyperk\"ahler manifolds take the form of $T^*(G/H)$ for $H\subseteq G$ a closed, reductive subgroup (see \cite{Dancer}). We therefore study the emptiness problem for hyperk\"ahler slices when $\tau$ is regular and $M=T^*(G/H)$. 

Having decided to study hyperk\"ahler slices in $T^*(G/H)$, we are naturally led to examine the Hamiltonian geometry of $T^*(G/H)$. The works of Knop \cite{KnopWeyl, KnopAsymptotic} encode this Hamiltonian geometry in the spherical geometry of $G/H$, by which we mean the $B$-orbit structure of $G/H$ for a Borel subgroup $B\subseteq G$. Fix such a subgroup $B\subseteq G$ and a maximal torus $T\subseteq B$ having Lie algebra $\lat\subseteq\lag$. Knop uses the \textit{Cartan space} $\mathfrak{a}_{G/H}^*\subseteq\mathfrak{t}^*$ to describe the (closure of the) moment map image of $T^*(G/H)$. This is complemented by Losev's work \cite{MR2362821}, which gives an algorithm for calculating the Cartan space of any given affine homogeneous $G$-variety \cite{MR2362821}. It is thus reasonable to imagine that spherical-geometric ideas are relevant to our specific emptiness problem.

\subsection{Description of results}\label{Subsection: Description of results}
Let all notation be as set in the previous subsection, and write $S_{\text{reg}}$ for the Slodowy slice determined by a regular $\mathfrak{sl}_2$-triple $\tau$ in $\mathfrak{g}$. Use the Killing form to identify $\mathfrak{g}^*$ with $\mathfrak{g}$, and let $\mu:T^*(G/H)\rightarrow\mathfrak{g}$ be the moment map of the Hamiltonian $G$-action on $T^*(G/H)$. We note the existence of a non-negative, $K$-invariant potential function for the first K\"ahler triple on $T^*(G/H)$ (Proposition \ref{Proposition: Kahler potential}), which by Bielawski's results \cite{Bielawski} implies that $(T^*(G/H)\times (G\times S_{\text{reg}}))\ssslash K$ and $\mu^{-1}(S_{\text{reg}})$ are canonically isomorphic as holomorphic symplectic manifolds. This isomorphism is subsequently used to prove that $(T^*(G/H)\times (G\times S_{\text{reg}}))\ssslash K\neq\emptyset$ if and only if $\mathfrak{h}^{\perp}$ contains a regular element of $\mathfrak{g}$ (Proposition \ref{Proposition: Non-empty slice}), where $\mathfrak{h}^{\perp}\subseteq\mathfrak{g}$ denotes the annihilator of $\mathfrak{h}:=\mathrm{Lie}(H)$ under the Killing form. The emptiness problem for $(T^*(G/H)\times (G\times S_{\text{reg}}))\ssslash K$ thus reduces to classifying the pairs $(G,H)$ for which $\mathfrak{h}^{\perp}$ contains a regular element. This is the stage at which spherical geometry becomes relevant, as we explain below. 

Inside of $G$, fix a maximal torus $T$ and a Borel subgroup $B$ satisfying $T\subseteq B$. These choices allow us to form the \textit{Cartan space} of $G/H$, denoted $\mathfrak{a}_{G/H}\subseteq\mathfrak{t}:=\mathrm{Lie}(T)$. We refer to the pair $(G,H)$ as being $\mathfrak{a}$\textit{-regular} if $\mathfrak{a}_{G/H}$ contains a regular element of $\mathfrak{g}$, and we use
Knop's description of the moment map image $\mu(T^*(G/H))$ to prove the following equivalences (see Proposition \ref{prop: regular element}, Corollary \ref{cor: gen stab}, and Corollary \ref{cor: num crit}): \begin{align}\begin{split}\label{Equation: Equivalences}(G,H)\text{ is }\mathfrak{a}\text{-regular } & \Longleftrightarrow\text{ }\mathfrak{h}^{\perp}\text{ contains a regular element }\\ & \Longleftrightarrow\text{ }Z_G(\mathfrak{a}_{G/H})=T\\ & \Longleftrightarrow \text{the identity component of }H_{*}\text{ is abelian}\\& \Longleftrightarrow c_{G}(G/H)+\mathrm{rk}_{G}(G/H)+\dim H=\dim B,\end{split}\end{align} where $Z_G(\mathfrak{a}_{G/H})$ is the subgroup consisting of all elements in $G$ that fix $\mathfrak{a}_{G/H}$ pointwise, $H_{*}$ is the generic stabilizer for the $H$-representation $\mathfrak{h}^{\perp}$ (see \ref{subsection: The cotangent bundle of a homogeneous space}), $c_G(G/H)$ is the complexity of $G/H$, and $\mathrm{rk}_G(G/H)$ is the rank of $G/H$. The first equivalence further reduces our emptiness problem to one of classifying the $\mathfrak{a}$-regular pairs $(G,H)$, thereby connecting our work to Losev's results \cite{MR2362821}. We then classify all such pairs $(G,H)$ (i.e. we solve the emptiness problem for $(T^*(G/H)\times (G\times S_{\text{reg}}))\ssslash K)$ in each of the following three cases:

\begin{itemize}
\item $G$ is semisimple and $H$ is a Levi subgroup of $G$ (\ref{sss: Levi});  
\item $G$ is semisimple and $H$ is a symmetric subgroup of $G$ (\ref{sss: sym}); 
\item $G$ is semisimple and $H$ is a reductive, spherical, non-symmetric subgroup of $G$ (\ref{sss: sph}).
\end{itemize}

In each case, we reduce to the study of strictly indecomposable (see \ref{Subsection: The Cartan space of a homogeneous affine space}) pairs $(G,H)$. It is in the last two cases that we obtain the most explicit results, and where we provide tables of all $\mathfrak{a}$-regular pairs $(G,H)$ that are strictly indecomposable.

\subsection{Organization}
Section \ref{Section: Preliminaries} establishes some of our conventions regarding symplectic and hyperk\"ahler geometry. Section \ref{Section: The hyperkahler geometry of T*(G/H)} then uses \cite{Dancer}, \cite{Kronheimer}, and \cite{MayrandTG} to develop the hyperk\"ahler-geometric features of $T^*(G/H)$ needed for the subsequent discussion of hyperk\"ahler slices. This leads to Section \ref{Section: The hyperkahler slice construction}, which reviews Bielawski's hyperk\"ahler slice construction and reduces the non-emptiness of $(T^*(G/H)\times (G\times S_{\text{reg}}))\ssslash K$ to the condition that $\mathfrak{h}^{\perp}$ contain a regular element. Section \ref{Section: The spherical geometry of T*(G/H)} then forms the spherical-geometric part of our paper, where we prove the equivalences \eqref{Equation: Equivalences} and subsequently obtain our classification results.       

\subsection*{Acknowledgements}
The central themes of this paper were developed at the Hausdorff Research Institute for Mathematics (HIM), while both authors took part in the HIM--sponsored program \textit{Symplectic geometry and representation theory}. We gratefully acknowledge the HIM for its hospitality and stimulating atmosphere. We also wish to recognize Steven Rayan and Markus R\"oser for enlightening conversations. The first author is supported by the Natural Sciences and Engineering Research Council of Canada [516638--2018].   

\section{Preliminaries}\label{Section: Preliminaries}

\subsection{Symplectic varieties and quotients}\label{Subsection: Symplectic varieties and quotients} Let $(X,\omega)$ be a symplectic variety, which for us shall always mean that $X$ is a smooth affine algebraic variety over $\bbC$ equipped with an algebraic symplectic form $\omega\in\Omega^2(X)$. Suppose that $X$ is acted upon algebraically by a connected complex reductive algebraic group $G$ having Lie algebra $\mathfrak{g}$. We recall that this action is called \textit{Hamiltonian} if it preserves $\omega$ and admits a moment map, i.e. a $G$-equivariant variety morphism $\mu:X\rightarrow\mathfrak{g}^*$ satisfying the following condition:
$$d(\mu^{z})=\iota_{\tilde{z}}\omega$$ for all $z\in\mathfrak{g}$, where $\mu^z:X\rightarrow\mathbb{C}$ is defined by $\mu^z(x):=(\mu(x))(z)$, $x\in X$, and $\tilde{z}$ is the fundamental vector field on $X$ associated to $z$. If the $G$-action is also free, then 
\begin{equation}\label{Equation: Holomorphic symplectic quotient}
X\dslash G:=\mu^{-1}(0)/G:=\mathrm{Spec}_{\text{max}}(\mathbb{C}[\mu^{-1}(0)]^G)
\end{equation}
is a smooth affine variety whose points are precisely the $G$-orbits in $\mu^{-1}(0)$. The quotient variety $X\dslash G$ then carries a symplectic form $\overline{\omega}$ that is characterized by the condition $\pi^*(\overline{\omega})=j^*(\omega)$, where $\pi:\mu^{-1}(0)\rightarrow X\dslash G$ is the quotient map and $j: \mu^{-1}(0)\rightarrow X$ is the inclusion. The symplectic variety $(X\dslash G,\overline{\omega})$ is called the \textit{symplectic quotient} of $X$ by $G$. 

\subsection{Hyperk\"ahler manifolds}
Recall that a smooth manifold $M$ is called \textit{hyperk\"ahler} if it comes equipped with three (integrable) complex structures $I_1$, $I_2$, and $I_3$, three (real) symplectic forms $\omega_1$, $\omega_2$, and $\omega_3$, and a single Riemannian metric $b$,
subject the following conditions: 
\begin{itemize}
\item $(I_{\ell},\omega_{\ell},b)$ is a K\"ahler triple for each $\ell=1,2,3$, i.e. $\omega_{\ell}(\cdot,\cdot)=b(I_{\ell}(\cdot),\cdot)$;
\item $I_1$, $I_2$, and $I_3$ satisfy the quaternionic identities $I_1I_2=I_3=-I_2I_1$, $I_1I_3=-I_2=-I_3I_1$, $I_2I_3=I_1=-I_3I_2$.  
\end{itemize}
One may construct new examples from existing ones via the \textit{hyperk\"ahler quotient} construction, which we now recall. Let $K$ be a compact connected Lie group acting freely on a hyperk\"ahler manifold $M$, and let $\mathfrak{k}$ be the Lie algebra of $K$. Assume that the $K$-action is \textit{tri-Hamiltonian}, meaning that $K$ preserves each K\"ahler triple $(I_{\ell},\omega_{\ell},b)$ and acts in a Hamiltonian fashion with respect to each symplectic form $\omega_{\ell}$. One thus has a \textit{hyperk\"ahler moment map}, i.e. a map $\mu_{\text{HK}}=(\mu_1,\mu_2,\mu_3):M\rightarrow\mathfrak{k}^*\oplus\mathfrak{k}^*\oplus\mathfrak{k}^*$ with the property that $\mu_{\ell}:M\rightarrow\mathfrak{k}^*$ is a moment map for the $K$-action with respect to $\omega_{\ell}$, $\ell=1,2,3$. The smooth manifold $$M\ssslash K:=\mu_{\text{HK}}^{-1}(0)/K=(\mu_1^{-1}(0)\cap\mu_2^{-1}(0)\cap\mu_3^{-1}(0))/K$$ is then canonically hyperk\"ahler (see \cite[Theorem 3.2]{Hitchin}), and it is called the \textit{hyperk\"ahler quotient} of $M$ by $K$. We shall let $(\overline{I}_{\ell},\overline{\omega}_{\ell},\overline{b})$, $\ell=1,2,3$, denote the three K\"ahler triples that constitute the hyperk\"ahler structure on $M\ssslash K$. It will be advantageous to note that \begin{equation}\label{Equation: Symplectic form relations}\pi^*(\overline{\omega}_{\ell})=j^*(\omega_{\ell}),\quad \ell=1,2,3,\end{equation} where $\pi:\mu^{-1}(0)\rightarrow M\ssslash K$ is the quotient map and $j:\mu^{-1}(0)\rightarrow M$ is the inclusion. 

Let $M$ be a hyperk\"ahler manifold and consider the complex symplectic $2$-form $\omega_{\mathbb{C}}:=\omega_2+i\omega_3$. One can verify that $\omega_{\mathbb{C}}$ is holomorphic with respect to $I_1$, and we will refer to $(M,I_1,\omega_{\mathbb{C}})$ as the \textit{underlying holomorphic symplectic manifold}. This leads to the following definition, which will apply to many situations of interest in our paper.

\begin{definition}\label{Definition: (G,K)-hyperkahler variety}
Let $K$ be a compact connected Lie group with complexification $G:=K_{\mathbb{C}}$. We define a $(G,K)$\textit{-hyperk\"ahler variety} is a to be a hyperk\"ahler manifold $M$ satisfying the following conditions:
\begin{itemize}
\item[(i)] the underlying holomorphic symplectic manifold is a symplectic variety (as defined in \ref{Subsection: Symplectic varieties and quotients}), and this variety is equipped with a Hamiltonian action of $G$;
\item[(ii)] the $G$-action restricts to a tri-Hamiltonian action of $K$ on $M$.  
\end{itemize}
\end{definition}  

Consider the hyperk\"ahler moment map $\mu_{\text{HK}}=(\mu_1,\mu_2,\mu_3):M\rightarrow\mathfrak{k}^*\oplus\mathfrak{k}^*\oplus\mathfrak{k}^*$ on a $(G,K)$-hyperk\"ahler variety $M$. Define the \textit{complex moment map} by $$\mu_{\mathbb{C}}:=\mu_2+i\mu_3:M\rightarrow\mathfrak{k}^*\otimes_{\mathbb{R}}\mathbb{C}=\mathfrak{g}^*,$$ which turns out to be the moment map for the Hamiltonian $G$-action on $M$. Now assume that this $G$-action is free. The inclusion $\mu_{\text{HK}}^{-1}(0)\subseteq\mu_{\mathbb{C}}^{-1}(0)$ then induces a map \begin{equation}\label{Equation: Hyperkahler and holomorphic symplectic quotients} \varphi:M\ssslash K\rightarrow M\dslash G,\end{equation} where we recall that $M\dslash G$ is defined via \eqref{Equation: Holomorphic symplectic quotient}. This map defines a diffeomorphism from $M\ssslash K$ to its image, the open subset $(G\cdot \mu_{\text{HK}}^{-1}(0))/G$ of $\mu_{\mathbb{C}}^{-1}(0)/G=M\dslash G$. Furthermore, $\varphi$ is an embedding of holomorphic symplectic manifolds with respect to the underlying holomorphic symplectic structure on $M\ssslash K$.

\section{The hyperk\"ahler geometry of $T^*(G/H)$}\label{Section: The hyperkahler geometry of T*(G/H)}
It will be convenient to standardize some of the Lie-theoretic notation used in this paper. Let $K$ be a compact connected semisimple Lie group, and fix a closed subgroup $L\subseteq K$. We will also let $G:=K_{\mathbb{C}}$ and $H:=L_{\mathbb{C}}$ denote the complexifications of $K$ and $L$, respectively, noting that $H$ is a closed reductive subgroup of $G$. Let $\mathfrak{k}$, $\mathfrak{l}$, $\mathfrak{g}$, and $\mathfrak{h}$ be the Lie algebras of $K$, $L$, $G$, and $H$, respectively, so that $\mathfrak{g}=\mathfrak{k}\otimes_{\mathbb{R}}\mathbb{C}$ and $\mathfrak{h}=\mathfrak{l}\otimes_{\mathbb{R}}\mathbb{C}$. Each of these Lie algebras comes equipped with the adjoint representation of the corresponding group, e.g. $\mathrm{Ad}:G\rightarrow\GL(\mathfrak{g})$, $g\mapsto\mathrm{Ad}_g$. The symbol ``$\mathrm{Ad}$'' will be used for all of the aforementioned adjoint representations, as context will always clarify any ambiguities that this abuse of notation may cause.   

Let $\langle\cdot,\cdot\rangle:\mathfrak{g}\otimes_{\mathbb{C}}\mathfrak{g}\rightarrow\mathbb{C}$ denote the Killing form on $\mathfrak{g}$, which is $G$-invariant and non-degenerate. It follows that 
\begin{equation}\label{Equation: Killing isomorphism} \mathfrak{g}\rightarrow\mathfrak{g}^*,\quad x\mapsto x^{\vee}:=\langle x,\cdot\rangle,\quad x\in\mathfrak{g}\end{equation} defines an isomorphism between the adjoint and coadjoint representations of $G$. With this in mind, we will sometimes take the moment map for a Hamiltonian $G$-action to be $\mathfrak{g}$-valued. 

\subsection{The cotangent bundle of $G$}
Note that left and right multiplication give the commuting actions
\begin{subequations}
\begin{align}
& g\cdot h:=gh,\quad g,h\in G\label{Equation: Left multiplication}\\
& g\cdot h:=hg^{-1},\quad g,h\in G\label{Equation: Right multiplication}
\end{align}
\end{subequations}
of $G$ on itself, and that these lift to commuting Hamiltonian actions of $G$ on $T^*G$. To be more explicit about this point, we shall use the left trivialization of $T^*G$ and the Killing form to identify $T^*G$ with $G\times\mathfrak{g}$. The lifts of \eqref{Equation: Left multiplication} and \eqref{Equation: Right multiplication} then become
\begin{subequations}
\begin{align}
& g\cdot (h,x)=(gh,x),\quad g\in G\text{ },(h,x)\in G\times\mathfrak{g},\label{Equation: Left lift}\\
& g\cdot (h,x)=(hg^{-1},\mathrm{Ad}_g(x)),\quad g\in G\text{ },(h,x)\in G\times\mathfrak{g}\label{Equation: Right lift},
\end{align}
\end{subequations}
respectively, while the induced symplectic form on $G\times\mathfrak{g}$ is defined on each tangent space $T_{(g,x)}(G\times\mathfrak{g})=T_gG\oplus\mathfrak{g}$ as follows (see \cite[Section 5, Equation (14L)]{Marsden}):
\begin{equation}\label{Equation: Left structure}(\Omega_{L})_{(g,x)}\bigg((d_eL_g(y_1),z_1),(d_eL_g(y_2),z_2)\bigg)=\langle y_1,z_2\rangle-\langle y_2,z_1\rangle+\langle x,[y_1,y_2]\rangle,\quad y_1,y_2,z_1,z_2\in\mathfrak{g},\end{equation}
where $L_g:G\rightarrow G$ denotes left multiplication by $g$ and $d_eL_g:\mathfrak{g}\rightarrow T_gG$ is the differential of $L_g$ at the identity $e\in G$ (see .
One can then verify that
\begin{subequations}
\begin{align}
& \phi_L:G\times\mathfrak{g}\rightarrow\mathfrak{g},\quad (g,x)\mapsto\mathrm{Ad}_g(x),\quad (g,x)\in G\times\mathfrak{g}\label{Equation: Left moment map},\\
& \phi_R:G\times\mathfrak{g}\rightarrow\mathfrak{g},\quad (g,x)\mapsto -x,\quad (g,x)\in G\times\mathfrak{g}\label{Equation: Right moment map}
\end{align}
\end{subequations}
are moment maps for \eqref{Equation: Left lift} and \eqref{Equation: Right lift}, respectively. 

\subsection{Kronheimer's hyperk\"ahler structure on $T^*G$}\label{Subsection: Kronheimer's structure}
Let $\mathbb{H}$ denote the quaternions, to be identified as a vector space with $\mathbb{R}^4$ via the usual basis $\{1,i,j,k\}$. Now consider the real vector space $C^{\infty}([0,1],\mathfrak{k})$ of all smooth maps $[0,1]\rightarrow\mathfrak{k}$. A choice of $K$-invariant inner product $\langle\cdot,\cdot\rangle_{\mathfrak{k}}$ on $\mathfrak{k}$ makes $\mathcal{M}:=C^{\infty}([0,1],\mathfrak{k})\otimes_{\mathbb{R}}\mathbb{H}=C^{\infty}([0,1],\mathfrak{k})^{\oplus 4}$ into a Banach space with an infinite-dimensional hyperk\"ahler manifold structure. This space carries the following hyperk\"ahler structure-preserving action of $\mathcal{G}:=C^{\infty}([0,1],K)$, the \textit{gauge group} of smooth maps $[0,1]\rightarrow K$ with pointwise multiplication as the group operation:
\begin{equation}\label{Equation: Gauge group action}\gamma\cdot(T_0,T_1,T_2,T_3):=(\mathrm{Ad}_{\gamma}(T_0)-\theta_R(\dot{\gamma}),\mathrm{Ad}_{\gamma}(T_1), \mathrm{Ad}_{\gamma}(T_2), \mathrm{Ad}_{\gamma}(T_3)),\quad \gamma\in\mathcal{G},\text{ }(T_0,T_1,T_2,T_3)\in\mathcal{M},
\end{equation}
where $\theta_R\in\Omega^1(K;\mathfrak{k})$ is the right-invariant Mauer--Cartan form on $K$. The subgroup $$\mathcal{G}_0:=\{\gamma\in\mathcal{G}:\gamma(0)=e=\gamma(1)\}\subseteq\mathcal{G}$$ then acts freely on $\mathcal{M}$ with a hyperk\"ahler moment map that can be written in the form $\Phi:\mathcal{M}\rightarrow C^{\infty}([0,1],\mathfrak{k})^{\oplus 3}$. It turns out that $\Phi^{-1}(0)$ consists of the solutions to \textit{N\"ahm's equations} (as defined in \cite[Proposition 1]{Dancer}, for example), and that Kronheimer constructed an explicit diffeomorphism
\begin{equation}\label{Equation: Explicit diffeomorphism}G\times\mathfrak{g}\cong \mathcal{M}\ssslash\mathcal{G}_0=\Phi^{-1}(0)/\mathcal{G}_0
\end{equation}
(cf. \cite[Proposition 1]{Kronheimer}). The smooth manifold $G\times\mathfrak{g}$ thereby inherits a hyperk\"ahler structure $(I_{\ell},\omega_{\ell},b)$, $\ell=1,2,3$. We note that $\omega_2+i\omega_3$ equals the form $\Omega_L$ from \eqref{Equation: Left structure}, while $I_1$ is the usual complex structure on $G\times\mathfrak{g}$ (see \cite[Section 2]{Kronheimer}).

Kronheimer's diffeomorphism \ref{Equation: Explicit diffeomorphism} has some important equivariance properties that we now discuss. Note that $\mathcal{G}_0$ is the kernel of
$$\mathcal{G}\rightarrow K\times K,\quad \gamma\mapsto(\gamma(0),\gamma(1)),\quad\gamma\in\mathcal{G},$$
so that we may identify $\mathcal{G}/\mathcal{G}_0$ and $K\times K$ as Lie groups. The $\mathcal{G}$-action on $\mathcal{M}$ induces a residual action of $\mathcal{G}/\mathcal{G}_0=K\times K$ on $\mathcal{M}\ssslash\mathcal{G}_0$, and this residual action is known to be tri-Hamiltonian (see \cite[Lemma 2]{Dancer}). Under \eqref{Equation: Explicit diffeomorphism}, the action of $K=\{e\}\times K\subseteq K\times K$ on $\mathcal{M}\ssslash\mathcal{G}_0$ corresponds to the $K$-action \eqref{Equation: Left lift} on $G\times\mathfrak{g}$. The diffeomorphism also intertwines the action of $K=K\times\{e\}\subseteq K\times K$ on $M\ssslash\mathcal{G}_0$ with the $K$-action \ref{Equation: Right lift} on $G\times\mathfrak{g}$.

The group $\operatorname{SO}_3(\mathbb{R})$ also has a natural manifestation in our setup. Given a point $(T_0,T_1,T_2,T_3)\in\mathcal{M}=C^{\infty}([0,1],\mathfrak{k})^{\oplus 4}$ and a matrix $A=(a_{pq})\in \operatorname{SO}_3(\mathbb{R})$, let us set $$T_{p}':=\sum_{q=1}^3a_{pq}T_q,\quad p=1,2,3\quad\text{ and }\quad A\cdot (T_0,T_1,T_2,T_3):=(T_0,T_1',T_2',T_3').$$ This action of $\operatorname{SO}_3(\mathbb{R})$ on $\mathcal{M}$ descends to an isometric action on the hyperk\"ahler quotient $\mathcal{M}\ssslash\mathcal{G}_0$. One can use \eqref{Equation: Explicit diffeomorphism} to interpret this as an isometric action of $\operatorname{SO}_3(\mathbb{R})$ on the hyperk\"ahler manifold $G\times\mathfrak{g}$, and it is not difficult to check that this action commutes with the $K$-actions \eqref{Equation: Left lift} and \eqref{Equation: Right lift}. It is important to note that $\operatorname{SO}_3(\mathbb{R})$ does not preserve all of the hyperk\"ahler structure on $G\times\mathfrak{g}$, in contrast to the $K$-actions. However, one can find a circle subgroup of $\operatorname{SO}_3(\mathbb{R})$ that preserves the K\"ahler triple $(I_3,\omega_3,b)$ on $G\times\mathfrak{g}$. A more explicit statement is that one can find an element $\theta\in\mathfrak{so}_3(\mathbb{R})$ whose fundamental vector field $\tilde{\theta}$ on $G\times\mathfrak{g}$ satisfies the following properties: $\mathcal{L}_{\tilde{\theta}}\omega_1=\omega_2$, $\mathcal{L}_{\tilde{\theta}}\omega_2=-\omega_1$, and $\tilde{\theta}$ generates a circle action on $G\times\mathfrak{g}$ that preserves $(I_3,\omega_3,b)$. This circle subgroup acts by rotations on $\mathrm{span}_{\mathbb{R}}\{\omega_1,\omega_2\}$, and the following is (the $\theta$-component of) a moment map for its Hamiltonian action on $(G\times\mathfrak{g},\omega_3)$:
\begin{equation}\label{Equation: First potential}\rho:G\times\mathfrak{g}\rightarrow\mathbb{R},\quad [(T_0,T_1,T_2,T_3)]\mapsto\frac{1}{2}\int_{0}^1\bigg(\langle T_1,T_1\rangle_{\mathfrak{k}}+\langle T_2,T_2\rangle_{\mathfrak{k}}\bigg)dt,\end{equation}
where $[(T_0,T_1,T_2,T_3)]$ denotes the point in the $\Phi^{-1}(0)/\mathcal{G}_0\cong G\times\mathfrak{g}$ represented by $(T_0,T_1,T_2,T_3)\in\Phi^{-1}(0)$ (see \cite[Section 4]{Dancer}). This leads to the following lemma.

\begin{lemma}\label{Lemma: Invariance}
The function $\rho$ is invariant under each of the $K$-actions \eqref{Equation: Left lift} and \eqref{Equation: Right lift} on $G\times\mathfrak{g}$.
\end{lemma}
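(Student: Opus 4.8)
The plan is to prove the stronger statement that the right-hand side of \eqref{Equation: First potential}, viewed as a function on the space $\Phi^{-1}(0)$ of solutions to N\"ahm's equations, is invariant under the \emph{entire} gauge group $\mathcal{G}=C^\infty([0,1],K)$, and then to deduce the lemma by passing to the quotient $\mathcal{M}\ssslash\mathcal{G}_0\cong G\times\mathfrak{g}$. This is the natural route because $\rho$ is written in terms of representatives $(T_0,T_1,T_2,T_3)\in\Phi^{-1}(0)$, so it is cleanest to exploit the explicit gauge-group action \eqref{Equation: Gauge group action} before descending. Write $\tilde{\rho}\colon\Phi^{-1}(0)\to\mathbb{R}$ for this upstairs function; its $\mathcal{G}_0$-invariance is what makes it descend to $\rho$, and we will upgrade this to $\mathcal{G}$-invariance.

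First I would record that the full (not merely based) gauge group $\mathcal{G}$ preserves $\Phi^{-1}(0)$ — this is a standard feature of the N\"ahm-equations picture underlying \eqref{Equation: Explicit diffeomorphism} — so that $\tilde{\rho}$ is defined on a $\mathcal{G}$-stable set and the formula \eqref{Equation: First potential} makes sense for every representative. Next, observe from \eqref{Equation: Gauge group action} that for $\gamma\in\mathcal{G}$ the last three components transform by pointwise conjugation, namely $(\gamma\cdot(T_0,T_1,T_2,T_3))_p(t)=\mathrm{Ad}_{\gamma(t)}(T_p(t))$ for $p=1,2,3$ and $t\in[0,1]$, while $\tilde{\rho}$ depends only on $T_1$ and $T_2$. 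Since the inner product $\langle\cdot,\cdot\rangle_{\mathfrak{k}}$ was chosen $K$-invariant, hence $\mathrm{Ad}$-invariant, we get $\langle\mathrm{Ad}_{\gamma(t)}(T_p(t)),\mathrm{Ad}_{\gamma(t)}(T_p(t))\rangle_{\mathfrak{k}}=\langle T_p(t),T_p(t)\rangle_{\mathfrak{k}}$ for all $t$ and $p=1,2$; the integrand of \eqref{Equation: First potential} is therefore pointwise unchanged, and so $\tilde{\rho}\circ\gamma=\tilde{\rho}$ on $\Phi^{-1}(0)$ for every $\gamma\in\mathcal{G}$.

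It remains to transfer this to $G\times\mathfrak{g}$. The residual action of $\mathcal{G}/\mathcal{G}_0=K\times K$ on $\mathcal{M}\ssslash\mathcal{G}_0$ is by construction induced by the $\mathcal{G}$-action on $\mathcal{M}$, so the $\mathcal{G}$-invariance just established forces the descended function $\rho$ to be invariant under all of $K\times K$. As recalled in the paragraph following \eqref{Equation: Explicit diffeomorphism}, Kronheimer's diffeomorphism intertwines the actions of the subgroups $\{e\}\times K$ and $K\times\{e\}$ of $K\times K$ with the $K$-actions \eqref{Equation: Left lift} and \eqref{Equation: Right lift} on $G\times\mathfrak{g}$, respectively. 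Invariance of $\rho$ under these two $K$-actions is now immediate.

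I do not anticipate a serious obstacle: the argument rests entirely on the pointwise $\mathrm{Ad}$-invariance of $\langle\cdot,\cdot\rangle_{\mathfrak{k}}$ together with the transparent form of \eqref{Equation: Gauge group action}. The only points meriting a line of justification are (i) that the full gauge group $\mathcal{G}$, and not just $\mathcal{G}_0$, preserves $\Phi^{-1}(0)$, and (ii) that the residual $K\times K$-action is genuinely realized by arbitrary $\mathcal{G}$-lifts — both standard in this setting. If one preferred to bypass the infinite-dimensional description, an alternative would be to use that $\rho$ is (a component of) a moment map for the circle action on $G\times\mathfrak{g}$ generated by $\tilde{\theta}$, that this circle action commutes with both $K$-actions, and that a moment-map component for a circle action commuting with a Hamiltonian action of a compact connected semisimple group is automatically invariant under that action; but the direct computation above is shorter and more self-contained.
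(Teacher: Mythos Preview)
Your proof is correct and follows essentially the same route as the paper: use the $\mathrm{Ad}$-invariance of $\langle\cdot,\cdot\rangle_{\mathfrak{k}}$ to show the integral formula is invariant under the full gauge group $\mathcal{G}$, then descend to obtain $(K\times K)$-invariance of $\rho$ on $G\times\mathfrak{g}$. The only cosmetic difference is that the paper checks $\mathcal{G}$-invariance of the integrand on all of $\mathcal{M}$ rather than on $\Phi^{-1}(0)$, which sidesteps your point (i) about $\mathcal{G}$ preserving the zero set; otherwise the arguments are identical.
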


\begin{proof}
Since $\langle\cdot,\cdot\rangle_{\mathfrak{k}}$ is a $K$-invariant inner product, the function
$$\mathcal{M}\rightarrow\mathbb{R},\quad (T_0,T_1,T_2,T_3)\mapsto\frac{1}{2}\int_{0}^1\bigg(\langle T_1,T_1\rangle_{\mathfrak{k}}+\langle T_2,T_2\rangle_{\mathfrak{k}}\bigg)dt$$ is invariant under the action \eqref{Equation: Gauge group action} of $\mathcal{G}$. This function therefore descends to a $\mathcal{G}/\mathcal{G}_0$-invariant function on the hyperk\"ahler quotient $\mathcal{M}\ssslash\mathcal{G}_0$. The descended function is exactly $\rho$ once we identify $\mathcal{M}\ssslash\mathcal{G}_0$ with $G\times\mathfrak{g}$ via \ref{Equation: Explicit diffeomorphism}. Now recall that the $\mathcal{G}/\mathcal{G}_0$-action on $\mathcal{M}\ssslash\mathcal{G}_0$ corresponds to a $(K\times K)$-action on $G\times\mathfrak{g}$, meaning that $\rho$ is a $(K\times K)$-invariant function on $G\times\mathfrak{g}$. It just remains to recall that the $K$-action \eqref{Equation: Left lift} (resp. \eqref{Equation: Right lift}) is the action of $K=\{e\}\times K\subseteq K\times K$ (resp. $K=K\times\{e\}\subseteq K\times K$). 
\end{proof}

\subsection{The hyperk\"ahler structure on $T^*(G/H)$}\label{Subsection: The hyperkahler structure on the cotangent bundle}
Let $G$ act on $G/H$ via left multiplication, and consider the canonical lift to a Hamiltonian action of $G$ on $T^*(G/H)$. Note also that $(\mathfrak{g}/\mathfrak{h})^*$ is a representation of $H$, and let $G\times_H(\mathfrak{g}/\mathfrak{h})^*$ denote the quotient of $G\times(\mathfrak{g}/\mathfrak{h})^*$ by the following action of $H$: $$h\cdot (g,\phi):=(gh^{-1},h\cdot\phi)\quad h\in G, (g,\phi)\in G\times(\mathfrak{g}/\mathfrak{h})^*.$$ We then have a canonical $G$-equivariant isomorphism
$T^*(G/H)\cong G\times_H(\mathfrak{g}/\mathfrak{h})^*$, where $G$ acts on the latter variety via left multiplication on the first factor. At the same time, the $H$-representation $(\mathfrak{g}/\mathfrak{h})^*$ is canonically isomorphic to the annihilator $\mathfrak{h}^{\perp}\subseteq\mathfrak{g}$ of $\mathfrak{h}$ under the Killing form. We thus have a $G$-equivariant isomorphism \begin{equation}\label{Equation: Cotangent} T^*(G/H)\cong G\times_H\mathfrak{h}^{\perp},\end{equation} with $G\times_H\mathfrak{h}^{\perp}$ defined analogously to $G\times_H(\mathfrak{g}/\mathfrak{h})^*$. 

Now consider the restriction of \eqref{Equation: Right lift} to an action of $H\subseteq G$ on $G\times\mathfrak{g}$, noting that this restricted action is Hamiltonian with respect to $\Omega_L$. The moment map for this $H$-action is obtained by composing the $\mathfrak{g}^*$-valued version of $\phi_R:G\times\mathfrak{g}\rightarrow\mathfrak{g}$ with the projection $\mathfrak{g}^*\rightarrow\mathfrak{h}^*$. It follows that the preimage of $0$ under the new moment map is $G\times\mathfrak{h}^{\perp}\subseteq G\times\mathfrak{g}$. The symplectic quotient of $G\times\mathfrak{g}$ by $H$ is therefore given by
$$(G\times\mathfrak{g})\dslash H=G\times_H\mathfrak{h}^{\perp}.$$ It is straightforward to verify that the induced symplectic structure on $G\times_H\mathfrak{h}^{\perp}$ renders \eqref{Equation: Cotangent} a $G$-equivariant isomorphism of symplectic varieites. It is also straightforward to check that \begin{equation}\label{Equation: Complex moment map}\nu_H:G\times_H\mathfrak{h}^{\perp}\rightarrow\mathfrak{g},\quad [(g,x)]\mapsto \mathrm{Ad}_g(x),\quad (g,x)\in G\times\mathfrak{h}^{\perp}\end{equation} is a moment map for the Hamiltonian action of $G$ on $G\times_H\mathfrak{h}^{\perp}$.

The above-defined holomorphic symplectic structure and Hamiltonian $G$-action on $G\times_H\mathfrak{h}^{\perp}$ turn out to come from a $(G,K)$-hyperk\"ahler variety structure (see Definition \ref{Definition: (G,K)-hyperkahler variety}), which we now discuss. Accordingly, recall that \eqref{Equation: Left lift} and \eqref{Equation: Right lift} define commuting, tri-Hamiltonian actions of $K$ on $G\times\mathfrak{g}$. Let us restrict the latter action to the subgroup $L\subseteq K$ fixed in the introduction to Section \ref{Section: The hyperkahler geometry of T*(G/H)}, and then consider the associated hyperk\"ahler quotient $(G\times\mathfrak{g})\ssslash L$. Note that \eqref{Equation: Left lift} then descends to a tri-Hamiltonian action of $K$ on $(G\times\mathfrak{g})\ssslash L$. At the same time, \eqref{Equation: Hyperkahler and holomorphic symplectic quotients} takes the form of a $K$-equivariant map\begin{equation}\label{Equation: Special case} (G\times\mathfrak{g})\ssslash L\rightarrow (G\times\mathfrak{g})\dslash H=G\times_H\mathfrak{h}^{\perp}.\end{equation}
One can then invoke \cite[Section 2]{Dancer} and/or \cite[Theorem 3.1]{MayrandTG} to deduce the following fact.

\begin{theorem}
The map \eqref{Equation: Special case} is a $K$-equivariant isomorphism of holomorphic symplectic manifolds.
\end{theorem}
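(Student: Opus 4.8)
The plan is to recognize \eqref{Equation: Special case} as a special case of the general map \eqref{Equation: Hyperkahler and holomorphic symplectic quotients}, and then to promote the resulting open embedding to an isomorphism using the completeness of Kronheimer's hyperk\"ahler metric on $G\times\mathfrak{g}$. First I would observe that $G\times\mathfrak{g}$, carrying Kronheimer's hyperk\"ahler structure together with the right $K$-action \eqref{Equation: Right lift}, is an $(H,L)$-hyperk\"ahler variety in the sense of Definition \ref{Definition: (G,K)-hyperkahler variety}: its underlying holomorphic symplectic manifold $(G\times\mathfrak{g},I_1,\Omega_L)$ is a symplectic variety on which $H\subseteq G$ acts Hamiltonianly via the restriction of \eqref{Equation: Right lift}, and this action restricts to the tri-Hamiltonian $L$-action used to define $(G\times\mathfrak{g})\ssslash L$. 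Both the $H$- and $L$-actions are free, since the induced action on the $G$-factor, $h\mapsto hg^{-1}$, is free. Thus \eqref{Equation: Hyperkahler and holomorphic symplectic quotients}, applied with $(G,K)$ replaced by $(H,L)$, produces exactly the map \eqref{Equation: Special case}; by the properties of \eqref{Equation: Hyperkahler and holomorphic symplectic quotients} recorded in Section \ref{Section: Preliminaries}, this map is a diffeomorphism onto the open subset $(H\cdot\mu_{\text{HK}}^{-1}(0))/H$ of $(G\times\mathfrak{g})\dslash H=G\times_H\mathfrak{h}^{\perp}$, an embedding of holomorphic symplectic manifolds, and---since the residual left $K$-action \eqref{Equation: Left lift} commutes with the right $H$-action---$K$-equivariant.

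It then remains only to show that this open subset exhausts $G\times_H\mathfrak{h}^{\perp}$, i.e. that $H\cdot\mu_{\text{HK}}^{-1}(0)=\mu_{\mathbb{C}}^{-1}(0)=G\times\mathfrak{h}^{\perp}$, equivalently that every $H$-orbit in the complex zero-level set meets the real zero-level set $\mu_1^{-1}(0)$. This is the one genuinely analytic ingredient, and it is precisely what Dancer's study of Nahm's equations \cite[Section 2]{Dancer} provides; a more axiomatic version is Mayrand's criterion \cite[Theorem 3.1]{MayrandTG} for when a hyperk\"ahler quotient coincides with the corresponding complex-symplectic (GIT) quotient. The mechanism I would appeal to is the Kempf--Ness principle: the free $H$-action on $G\times\mathfrak{h}^{\perp}$ is proper with closed orbits, Kronheimer's metric on $G\times\mathfrak{g}$ is complete, and hence the function $hL\mapsto\lVert\mu_1(h\cdot p)\rVert^2$ on $H/L$ is proper and convex along the geodesics of the associated symmetric space for every $p\in\mu_{\mathbb{C}}^{-1}(0)$, so it attains a minimum, which is forced to be a zero of $\mu_1$. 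Rather than reproduce this argument I would simply cite \cite{Dancer} and \cite{MayrandTG}.

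Finally, a bijective embedding of holomorphic symplectic manifolds is an isomorphism of holomorphic symplectic manifolds, and $K$-equivariance has already been established in Step 1; this yields the theorem. The main obstacle is the surjectivity step: the rest is bookkeeping within the framework of Section \ref{Section: Preliminaries}, but showing that the Kempf--Ness open embedding exhausts the complex-symplectic quotient genuinely requires the completeness built into Kronheimer's construction, which is why the statement is deduced from \cite{Dancer} and \cite{MayrandTG} rather than proved from scratch.
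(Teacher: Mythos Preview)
Your proposal is correct and matches the paper's approach exactly: the paper does not give a self-contained proof but simply invokes \cite[Section 2]{Dancer} and \cite[Theorem 3.1]{MayrandTG}, which is precisely what you do after unpacking why those references apply. Your sketch in fact supplies more detail than the paper, spelling out the $(H,L)$-hyperk\"ahler variety structure, the open-embedding property from \eqref{Equation: Hyperkahler and holomorphic symplectic quotients}, and the Kempf--Ness/completeness mechanism behind surjectivity that the cited works provide.
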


Let $(I_{\ell}^H,\omega_{\ell}^H,b^H)$, $\ell=1,2,3$, denote the hyperk\"ahler manifold structure on $G\times_H\mathfrak{h}^{\perp}$ for which \eqref{Equation: Special case} is an isomorphism of hyperk\"ahler manifolds, which by the preceding discussion makes $G\times_H\mathfrak{h}^{\perp}$ into a $(G,K)$-hyperk\"ahler variety. To help investigate this $(G,K)$-hyperk\"ahler structure, we use Lemma \ref{Lemma: Invariance} to see that $\rho$ descends to a $K$-invariant function $\rho^H:(G\times\mathfrak{g})\ssslash L\rightarrow\mathbb{R}$. Note that since \eqref{Equation: Special case} is $K$-equivariant, we may regard $\rho^H$ as a $K$-invariant function on $G\times_H\mathfrak{h}^{\perp}$.          

\begin{proposition}\label{Proposition: Kahler potential}
The function $\rho^H:G\times_H\mathfrak{h}^{\perp}\rightarrow\mathbb{R}$ is a $K$-invariant potential for the K\"ahler manifold $(G\times_H\mathfrak{h}^{\perp},I_1^H,\omega_1^H,b^H)$, i.e. $\omega_1^H=2i\partial\overline{\partial}\rho^H$ for the Dolbeault operators $\partial$ and $\overline{\partial}$ associated with $I_1^H$.
\end{proposition}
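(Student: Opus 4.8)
The plan is to transport the statement back to the infinite-dimensional hyperk\"ahler manifold $\mathcal{M}$ via Kronheimer's diffeomorphism \eqref{Equation: Explicit diffeomorphism} and the isomorphism \eqref{Equation: Special case}, where the K\"ahler potential is visibly given by the $L^2$-type expression appearing in $\rho$. Concretely, on $\mathcal{M}=C^{\infty}([0,1],\mathfrak{k})\otimes_{\mathbb{R}}\mathbb{H}$ with its flat hyperk\"ahler structure, the function $(T_0,T_1,T_2,T_3)\mapsto\frac{1}{2}\int_0^1(\langle T_1,T_1\rangle_{\mathfrak{k}}+\langle T_2,T_2\rangle_{\mathfrak{k}})\,dt$ is, up to the usual constant, a K\"ahler potential for the first K\"ahler structure $(I_1,\omega_1,b)$ on $\mathcal{M}$ — this is because, in the identification of $I_1$ with the complex structure making $T_1+iT_2$ a holomorphic coordinate (compatible with $\omega_2+i\omega_3=\Omega_L$ being $I_1$-holomorphic, as noted in \ref{Subsection: Kronheimer's structure}), the flat metric has $\|T_1\|^2+\|T_2\|^2$ as its standard potential on the $(T_1,T_2)$-plane, while the $(T_0,T_3)$-directions do not contribute to $\omega_1$ in the relevant way. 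The point of invoking the circle action is precisely that $\rho$ was already identified in \eqref{Equation: First potential} as the $\theta$-component of a moment map for a circle in $\operatorname{SO}_3(\mathbb{R})$ satisfying $\mathcal{L}_{\tilde\theta}\omega_1=\omega_2$ and $\mathcal{L}_{\tilde\theta}\omega_2=-\omega_1$, and a standard lemma (due essentially to Hitchin--Karlhede--Lindstr\"om--Ro\v{c}ek, and used by Dancer--Swann) says that such a moment map is automatically a K\"ahler potential for $(I_1,\omega_1)$: from $d\rho=\iota_{\tilde\theta}\omega_3$ together with $\mathcal{L}_{\tilde\theta}\omega_1=\omega_2$ one computes $dd^c\rho$ in terms of $\omega_1$.

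The key steps, in order, are as follows. First, recall the HKLR-type statement: if $X$ is a hyperk\"ahler manifold with a circle action generated by $\tilde\theta$ preserving $(I_3,\omega_3,b)$ and rotating $\operatorname{span}_{\mathbb{R}}\{\omega_1,\omega_2\}$, i.e. $\mathcal{L}_{\tilde\theta}\omega_1=\omega_2$, $\mathcal{L}_{\tilde\theta}\omega_2=-\omega_1$, and if $f$ is a moment map for this action with respect to $\omega_3$ (so $df=\iota_{\tilde\theta}\omega_3$), then $\omega_1=2i\partial_1\bar\partial_1 f$, where $\partial_1,\bar\partial_1$ are the Dolbeault operators for $I_1$. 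The proof is a short Cartan-calculus computation: using $\mathcal{L}_{\tilde\theta}=d\iota_{\tilde\theta}+\iota_{\tilde\theta}d$, the closedness of the $\omega_\ell$, and the K\"ahler identities $\omega_\ell(\cdot,\cdot)=b(I_\ell\cdot,\cdot)$ together with the quaternionic relations, one rewrites $d(I_1\,df)$ (which equals $2i\partial_1\bar\partial_1 f$ up to sign conventions) as $\mathcal{L}_{\tilde\theta}\omega_1=\omega_2$... — one must be slightly careful, and the cleanest route is: $d^c f = -df\circ I_1$, and $\iota_{\tilde\theta}\omega_1 = b(I_1\tilde\theta,\cdot) = -\omega_3(I_1\tilde\theta, I_1\cdot)\cdot\ldots$; in practice one shows $\iota_{\tilde\theta}\omega_1 = -df\circ I_1 \cdot(\text{const})$ and then $dd^c f = \mathcal{L}_{\tilde\theta}\omega_1 \cdot(\text{const}) = \omega_1$. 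Second, apply this to $M=G\times_H\mathfrak{h}^{\perp}$ with its $(G,K)$-hyperk\"ahler structure $(I_\ell^H,\omega_\ell^H,b^H)$: by the construction in \ref{Subsection: The hyperkahler structure on the cotangent bundle}, the circle action on $G\times\mathfrak{g}$ descends to the hyperk\"ahler quotients $(G\times\mathfrak{g})\ssslash L$ and hence to $G\times_H\mathfrak{h}^{\perp}$ (it commutes with the $K$-actions, in particular with $L$), and by the general relation \eqref{Equation: Symplectic form relations} the descended forms still satisfy $\mathcal{L}_{\tilde\theta^H}\omega_1^H=\omega_2^H$, $\mathcal{L}_{\tilde\theta^H}\omega_2^H=-\omega_1^H$, while $\rho^H$ descends the moment map $\rho$ and hence is a moment map for the descended circle action with respect to $\omega_3^H$. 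Third, conclude $\omega_1^H = 2i\partial\bar\partial\rho^H$ from the lemma of the first step; the $K$-invariance of $\rho^H$ is already established in the paragraph preceding the proposition.

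The main obstacle is bookkeeping rather than conceptual: one must verify that the circle action genuinely descends through both the $\mathcal{G}_0$-reduction and the $L$-reduction with the moment-map and Lie-derivative identities intact, and that $\rho^H$ really is the $\omega_3^H$-moment map for the \emph{descended} circle (not merely the descent of a function). For the $\mathcal{G}_0$-step this is exactly the content of the discussion around \eqref{Equation: First potential} (citing \cite[Section 4]{Dancer}); for the $L$-step one uses that reduction at level $0$ by a group commuting with the circle carries an invariant moment map to a moment map on the quotient, and that $\mathcal{L}_{\tilde\theta}\omega_\ell$ is computed before reduction and then pulled back/pushed down via \eqref{Equation: Symplectic form relations}. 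A secondary point requiring care is fixing the normalization constant so that the factor is exactly $2i$ and the potential is exactly $\rho^H$ (not a scalar multiple); this is pinned down by checking the formula on the flat model $\mathcal{M}$, where $\omega_1$ restricted to the $(T_1,T_2)$-plane is the standard form and $\frac12(\|T_1\|^2+\|T_2\|^2)$ is its standard potential, matching the $\frac12$ in \eqref{Equation: First potential}.
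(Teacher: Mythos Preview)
Your proposal is correct and follows essentially the same route as the paper: verify that the $S^1\subseteq\operatorname{SO}_3(\mathbb{R})$-action descends through the $L$-reduction, that $\rho^H$ is the $\omega_3^H$-moment map for the descended circle, and that the Lie-derivative identities $\mathcal{L}_{\tilde{\tilde\theta}}\omega_1^H=\omega_2^H$, $\mathcal{L}_{\tilde{\tilde\theta}}\omega_2^H=-\omega_1^H$ persist on the quotient, then invoke the HKLR calculation (which the paper simply cites as \cite[Section 3(E)]{Hitchin}). Your opening remarks about checking the flat model on $\mathcal{M}$ directly are not needed for the argument and the paper does not pursue them, but they do no harm as a normalization sanity check.
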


\begin{proof}
Let $(\mu_1,\mu_2,\mu_3):G\times\mathfrak{g}\rightarrow(\mathfrak{k}^*)^{\oplus 3}$ denote the hyperk\"ahler moment map for the tri-Hamiltonian $K$-action \eqref{Equation: Right lift}, and let $(\mu_1^H,\mu_2^H,\mu_3^H):G\times\mathfrak{g}\rightarrow(\mathfrak{l}^*)^{\oplus 3}$ be the induced hyperk\"ahler moment map for the action of $L\subseteq K$. Consider the action of $\operatorname{SO}_3(\mathbb{R})$ on $G\times\mathfrak{g}$, recalling our description of a specific subgroup $S^1\subseteq\operatorname{SO}_3(\mathbb{R})$ and its action on $G\times\mathfrak{g}$ (see \ref{Subsection: Kronheimer's structure}). This description implies that $S^1$ preserves $\mu_{3}$ and acts by rotations on $\mathrm{span}_{\mathbb{R}}\{\mu_{1},\mu_{2}\}$. We conclude that $S^1$ preserves $\mu_3^H$ and acts by rotations on $\mathrm{span}_{\mathbb{R}}\{\mu_{1}^H,\mu_{2}^H\}$, so that the submanifold $(\mu_{1}^H)^{-1}(0)\cap(\mu_{2}^H)^{-1}(0)\cap(\mu_{3}^H)^{-1}(0)\subseteq G\times\mathfrak{g}$ is necessarily $S^1$-invariant. Observe that the actions of $S^1$ and $L$ on this submanifold commute, owing to the fact that the action of $\operatorname{SO}_3(\mathbb{R})$ on $G\times\mathfrak{g}$ commutes with the $K$-action \eqref{Equation: Right lift}. The quotient $$\big((\mu_{1}^H)^{-1}(0)\cap(\mu_{2}^H)^{-1}(0)\cap(\mu_{3}^H)^{-1}(0)\big)/L=(G\times\mathfrak{g})\ssslash L$$ therefore carries a residual $S^1$-action, so that we may use the hyperk\"ahler isomorphism \eqref{Equation: Special case} to equip $G\times_H\mathfrak{h}^{\perp}$ with a corresponding $S^1$-action. The relations \eqref{Equation: Symplectic form relations} then imply that $S^1$ preserves $\omega_3^H$. Now consider the element $\theta\in\mathfrak{so}_3(\mathbb{R})$ discussed in \ref{Subsection: Kronheimer's structure}, recalling that $\rho$ is the $\theta$-component of a moment map for the $S^1$-action on $G\times\mathfrak{g}$. It is then straightforward to check that $\rho^H$ is the $\theta$-component of a moment map for the $S^1$-action that preserves $\omega_3^H$. Note also that the identities $\mathcal{L}_{\tilde{\theta}}\omega_1=\omega_2$ and $\mathcal{L}_{\tilde{\theta}}\omega_2=-\omega_1$ give $$\mathcal{L}_{\tilde{\tilde{\theta}}}\omega_1^H=\omega_2^H\quad \text{ and }\quad \mathcal{L}_{\tilde{\tilde{\theta}}}\omega_2^H=-\omega_1^H,$$ where $\tilde{\tilde{\theta}}$ is the fundamental vector field on $G\times_H\mathfrak{h}^{\perp}$ associated to $\theta$. These last two sentences give exactly the ingredients needed to reproduce a calculation from \cite[Section 3(E)]{Hitchin}, to the effect that $\rho^H$ is a K\"ahler potential for $(I_1^H,\omega_1^H,b^H)$. 
\end{proof}

\section{The hyperk\"ahler slice construction}\label{Section: The hyperkahler slice construction}

\subsection{The slice as a symplectic variety}\label{Subsection: The slice as a symplectic variety}
Recall the notation established in the introduction to Section \ref{Section: The hyperkahler geometry of T*(G/H)}, and let $$\adj:\mathfrak{g}\rightarrow\mathfrak{gl}(\mathfrak{g}),\quad x\mapsto\adj_x,\quad x\in\mathfrak{g}$$ denote the adjoint representation of $\mathfrak{g}$. One calls $\tau=(\xi,h,\eta)\in\mathfrak{g}^{\oplus 3}$ an $\mathfrak{sl}_2$-\textit{triple} if $[\xi,\eta]=h$, $[h,\xi]=2\xi$, and $[h,\eta]=-2\eta$, in which case there is an associated \textit{Slodowy slice} $$S_{\tau}:=\xi+\mathrm{ker}(\mathrm{ad}_{\eta})\subseteq\mathfrak{g}.$$ We will make extensive use of the affine variety $G\times S_{\tau}$, some geometric features of which we now develop. 

Consider the isomorphisms $T^*G\cong G\times\mathfrak{g}^*\cong G\times\mathfrak{g}$ induced by the right trivialization of $T^*G$ and the Killing form. The symplectic form on $T^*G$ thereby corresponds to such a form $\Omega_R$ on $G\times\mathfrak{g}$, described as follows on the tangent space $T_{(g,x)}(G\times\mathfrak{g})=T_gG\oplus\mathfrak{g}$ (see \cite[Section 5, Equation (14R)]{Marsden}:
\begin{equation}\label{Equation: Right trivialized form}(\Omega_R)_{(g,x)}\bigg((d_eR_g(y_1),z_1),(d_eR_g(y_2),z_2)\bigg)=\langle y_1,z_2\rangle-\langle y_2,z_1\rangle-\langle x,[y_1,y_2]\rangle\end{equation}
for all $y_1,y_2,z_1,z_2\in\mathfrak{g}$, where $R_g:G\rightarrow G$ is right multiplication by $g$, $d_eR_g$ is its differential at $e$. It turns out that $G\times S_{\tau}$ is a symplectic subvariety of $(G\times\mathfrak{g},\Omega_R)$. The $G$-action $$g\cdot (h,x)=(hg^{-1},x),\quad g\in G,\text{ }(h,x)\in G\times S_{\tau}$$ is then Hamiltonian and $$\mu_{\tau}:G\times S_{\tau}\rightarrow\mathfrak{g},\quad (g,x)\mapsto -\mathrm{Ad}_{g^{-1}}(x),\quad (g,x)\in G\times S_{\tau}$$ is a moment map.

\begin{remark}
Bielawski's paper \cite{Bielawski} uses $\Omega_R$ to realize $G\times S_{\tau}$ as a symplectic subvariety $G\times\mathfrak{g}$, as opposed to using the other symplectic form $\Omega_L$ (see \eqref{Equation: Left structure}). It is for the sake of consistency with Bielawski's work that we are using the same convention. However, this is the only case in which we use $\Omega_R$ preferentially to $\Omega_L$.        
\end{remark}

Now let $X$ be a symplectic variety endowed with a Hamiltonian $G$-action and moment map $\mu:X\rightarrow\mathfrak{g}$. The diagonal action of $G$ on $X\times (G\times S_{\tau})$ is then Hamiltonian and admits a moment map of $$\tilde{\mu}:X\times (G\times S_{\tau})\rightarrow\mathfrak{g},\quad (x,(g,y))\mapsto \mu(x)+\mu_{\tau}(g,y),\quad x\in X,\text{ }(g,y)\in G\times S_{\tau}.$$ Noting that this diagonal action is free, one has the symplectic quotient $$(X\times (G\times S_{\tau}))\dslash G=\tilde{\mu}^{-1}(0)/G.$$

\begin{proposition}\label{Proposition: Variety isomorphism}
Let $(X,\omega)$ be a symplectic variety on which $G$ acts in a Hamiltonian fashion with moment map $\mu:X\rightarrow\mathfrak{g}$, and let $\tau$ be an $\mathfrak{sl}_2$-triple. The following statements then hold. 
\begin{itemize}
\item[(i)] There is a canonical isomorphism of affine varieties
$\mu^{-1}(S_{\tau})\cong (X\times (G\times S_{\tau}))\dslash G$.
\item[(ii)] Under the isomorphism from \textnormal{(i)}, the symplectic form on $(X\times (G\times S_{\tau}))\dslash G$ corresponds to the restriction of $\omega$ to $\mu^{-1}(S_{\tau})$.
\item[(iii)] $\mu^{-1}(S_{\tau})$ is a symplectic subvariety of $X$.
\end{itemize}
\end{proposition}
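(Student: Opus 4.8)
The plan is to isolate parts~(i) and~(ii) as the real content and then deduce~(iii) formally. The key construction will be a pair of mutually inverse morphisms relating $\mu^{-1}(S_\tau)$ to $\tilde\mu^{-1}(0)/G$, and the one non-formal ingredient is the observation that $\Omega_R$ restricts to the zero $2$-form on $\{e\}\times S_\tau\subseteq G\times\mathfrak g$. For~(i) I would define $\Psi\colon\tilde\mu^{-1}(0)\to X$ by $\Psi(x,g,y):=g\cdot x$. Equivariance of $\mu$ together with the defining equation $\mu(x)+\mu_\tau(g,y)=0$ of $\tilde\mu^{-1}(0)$ --- which reads $\mu(x)=\mathrm{Ad}_{g^{-1}}(y)$ --- gives $\mu(\Psi(x,g,y))=\mathrm{Ad}_g(\mu(x))=y\in S_\tau$, so $\Psi$ has image in $\mu^{-1}(S_\tau)$. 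Since the diagonal action is $g'\cdot(x,g,y)=(g'\cdot x,\,g(g')^{-1},\,y)$, applying $\Psi$ returns $(g(g')^{-1})\cdot(g'\cdot x)=g\cdot x$, so $\Psi$ is $G$-invariant and descends to a morphism $\overline\Psi\colon\tilde\mu^{-1}(0)/G\to\mu^{-1}(S_\tau)$; here $\mu^{-1}(S_\tau)$ is a closed, hence affine, subvariety of $X$, and the quotient is categorical. I would build the inverse from the section $\iota\colon\mu^{-1}(S_\tau)\to\tilde\mu^{-1}(0)$, $z\mapsto(z,e,\mu(z))$, which is well defined since $\mu_\tau(e,\mu(z))=-\mu(z)$ forces $\tilde\mu(\iota(z))=0$; then $\overline\Psi([\iota(z)])=e\cdot z=z$, and conversely every $G$-orbit in $\tilde\mu^{-1}(0)$ meets the image of $\iota$, since translating $(x,g,y)$ by $g'=g$ yields $(g\cdot x,e,y)$ with $y=\mu(g\cdot x)$. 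Hence $z\mapsto[\iota(z)]$ and $\overline\Psi$ are mutually inverse morphisms of affine varieties, which is~(i).

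For~(ii), write $\sigma:=\mathrm{pr}_X^*\omega+\mathrm{pr}_{G\times S_\tau}^*(\Omega_R|_{G\times S_\tau})$ for the symplectic form on $X\times(G\times S_\tau)$, let $j\colon\tilde\mu^{-1}(0)\hookrightarrow X\times(G\times S_\tau)$ be the inclusion, and let $\Lambda\colon\mu^{-1}(S_\tau)\to\tilde\mu^{-1}(0)/G$, $z\mapsto[\iota(z)]$, be the isomorphism from~(i). Using the defining relation $\pi^*\overline\omega=j^*\sigma$ of the reduced form, I get $\Lambda^*\overline\omega=\iota^*\pi^*\overline\omega=\iota^*j^*\sigma=(j\circ\iota)^*\sigma$. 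Now $j\circ\iota$ is the map $z\mapsto(z,e,\mu(z))$: its $X$-component is the inclusion $\mu^{-1}(S_\tau)\hookrightarrow X$, contributing $\omega|_{\mu^{-1}(S_\tau)}$, while its $(G\times S_\tau)$-component $z\mapsto(e,\mu(z))$ factors through the embedding $\{e\}\times S_\tau\hookrightarrow G\times\mathfrak g$. The remaining point is that $\Omega_R$ vanishes on $\{e\}\times S_\tau$: at $(e,x)$ with $x\in S_\tau$, every tangent vector to $\{e\}\times S_\tau$ has zero $G$-component, so all three terms of~\eqref{Equation: Right trivialized form} vanish. Therefore $(j\circ\iota)^*\sigma=\omega|_{\mu^{-1}(S_\tau)}$, which is the assertion of~(ii).

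Finally~(iii) is immediate: the diagonal $G$-action on $X\times(G\times S_\tau)$ is free --- indeed it is already free on the $G\times S_\tau$ factor, since $g'\cdot(g,y)=(g(g')^{-1},y)$ --- so by~\ref{Subsection: Symplectic varieties and quotients} the quotient $(X\times(G\times S_\tau))\dslash G$ is a smooth affine variety with non-degenerate symplectic form $\overline\omega$; via the isomorphism of~(i), $\mu^{-1}(S_\tau)$ is smooth, and by~(ii) the form $\omega|_{\mu^{-1}(S_\tau)}$ is the pullback of $\overline\omega$ under an isomorphism, hence non-degenerate, so the closed subvariety $\mu^{-1}(S_\tau)\subseteq X$ is a symplectic subvariety. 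I do not anticipate a genuine obstacle: the only real geometric input is the vanishing of $\Omega_R$ on $\{e\}\times S_\tau$, and everything else is bookkeeping. The traps to avoid are the sign convention in $\mu_\tau$ --- needed for $\iota$ to land in the zero level set --- and working consistently with $\Omega_R$ in place of $\Omega_L$, in line with the Remark preceding the proposition.
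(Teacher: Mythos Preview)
Your proof is correct and follows essentially the same approach as the paper: the maps $\iota$ and $\Psi$ are precisely the paper's $\varphi$ and $\psi$, and your computation for~(ii) --- pulling back the reduced form along $\iota$ and observing that the $\Omega_R$-contribution vanishes because the $G$-component of the tangent vector is zero --- is exactly the paper's argument, as is the deduction of~(iii) from the non-degeneracy of $\overline\omega$. The only minor addition you make is spelling out explicitly why the diagonal action is free, which the paper states without justification in the setup.
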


\begin{proof}
To prove (i), note that $(x,(e,\mu(x)))\in \tilde{\mu}^{-1}(0)$ for all $x\in\mu^{-1}(S_{\tau})$. We may therefore consider the morphism \begin{equation}\label{Equation: Lifted map}\varphi:\mu^{-1}(S_{\tau})\rightarrow \tilde{\mu}^{-1}(0), \quad x\mapsto (x,(e,\mu(x))),\quad x\in \mu^{-1}(S_{\tau}),\end{equation} and its composition with the quotient map $\pi:\tilde{\mu}^{-1}(0)\rightarrow \tilde{\mu}^{-1}(0)/G=(X\times (G\times S_{\tau}))\dslash G$, i.e.
$$\overline{\varphi}:\mu^{-1}(S_{\tau})\rightarrow (X\times (G\times S_{\tau}))\dslash G, \quad x\mapsto [(x,(e,\mu(x)))],\quad x\in \mu^{-1}(S_{\tau}).$$
At the same time, it is straightforward to check that $g\cdot x\in\mu^{-1}(S_{\tau})$ for all $(x,(g,y))\in\tilde{\mu}^{-1}(0)$. We thus have the morphism $$\psi:\tilde{\mu}^{-1}(0)\rightarrow\mu^{-1}(S_{\tau}),\quad (x,(g,y))\mapsto g\cdot x,\quad (x,(g,y))\in\tilde{\mu}^{-1}(0),$$ which is easily seen to be $G$-invariant. It follows that $\psi$ descends to the quotient $\tilde{\mu}^{-1}(0)/G=(X\times (G\times S_{\tau}))\dslash G$, thereby giving a morphism $\overline{\psi}:(X\times (G\times S_{\tau}))\dslash G\rightarrow\mu^{-1}(S_{\tau})$. Furthermore, it is a straightforward calculation that $\overline{\varphi}$ and $\overline{\psi}$ are inverses. This proves (i).

In preparation for (ii), let $\overline{\omega}$ denote the symplectic form on $(X\times (G\times S_{\tau}))\dslash G$ and consider the inclusions $j:G\times S_{\tau}\rightarrow G\times\mathfrak{g}$ and $k:\tilde{\mu}^{-1}(0)\rightarrow X\times (G\times S_{\tau})$. Note that $\pi^*(\overline{\omega})$ is the restriction to $\tilde{\mu}^{-1}(0)$ of the symplectic form on $X\times (G\times S_{\tau})$. This last symplectic form is $\omega\oplus j^*(\Omega_R)$, so that we have \begin{equation}\label{Equation: Pullback identity}\pi^*(\overline{\omega})=k^*(\omega\oplus j^*(\Omega_R)).\end{equation}

Our objective is to prove that $\overline{\varphi}^*(\overline{\omega})=\ell^*(\omega)$, where $\ell:\mu^{-1}(S_{\tau})\rightarrow X$ is the inclusion. Accordingly, note that
\begin{align*}
\overline{\varphi}^*(\overline{\omega}) & =\varphi^*(\pi^*(\overline{\varphi}))\hspace{200pt}[\text{since }\overline{\varphi}=\pi\circ\varphi]\\
& =(k\circ\varphi)^*(\omega\oplus j^*(\Omega_R))\hspace{200pt}[\text{by }\eqref{Equation: Pullback identity}].
\end{align*}
It follows that
\begin{equation}\label{Equation: First pullback}\big(\overline{\varphi}^*(\overline{\omega})\big)_x(v_1,v_2)=\big(\omega_x\oplus (\Omega_R)_{(e,\mu(x))}\big)(d_x\varphi(v_1),d_x\varphi(v_2))
\end{equation}
for all $x\in \mu^{-1}(S_{\tau})$ and $v_1,v_2\in T_x(\mu^{-1}(S_{\tau}))$. At the same time, \eqref{Equation: Lifted map} implies the identity $$d_x\varphi(v_i)=(v_i,(0,d_x\mu(v_i)))$$ for $i=1,2$ in the tangent space $$T_{(x,(e,\mu(x)))}(\tilde{\mu}^{-1}(0))\subseteq T_{(x,(e,\mu(x)))}(X\times (G\times S_{\tau}))=T_xX\oplus (\mathfrak{g}\oplus T_{\mu(x)}S_{\tau}).$$ By incorporating this into \eqref{Equation: First pullback}, we obtain
\begin{align*}\big(\overline{\varphi}^*(\overline{\omega})\big)_x(v_1,v_2) & = \omega_x(v_1,v_2)+(\Omega_R)_{(e,\mu(x))}\big((0,d_x\mu(v_1)),(0,d_x\mu(v_2)\big)\\
& =  \omega_x(v_1,v_2) \hspace{200pt}[\text{by }\eqref{Equation: Right trivialized form}].
\end{align*}
We conclude that $\overline{\varphi}^*(\overline{\omega})=\ell^*(\omega)$, proving (ii).

It remains only to prove (iii), i.e. the claim that $\ell^*(\omega)$ is non-degenerate. However, this follows immediately from (i), (ii), and the fact that $\overline{\omega}$ is non-degenerate. 
\end{proof}  

\subsection{Bielawski's construction}\label{Subsection: Bielawski's construction}
We now review the pertinent hyperk\"ahler-geometric features of $\mu^{-1}(S_{\tau})$, which are largely due to Bielawski's work \cite{Bielawski}. The following $(G,K)$-hyperk\"ahler variety will play an essential role.

\begin{theorem}
If $\tau$ is an $\mathfrak{sl}_2$-triple, then $G\times S_{\tau}$ is canonically a $(G,K)$-hyperk\"ahler variety. The Hamiltonian $G$-action and underlying holomorphic symplectic structure on $G\times S_{\tau}$ associated with this $(G,K)$-hyperk\"ahler structure are precisely those described in \ref{Subsection: The slice as a symplectic variety}. 
\end{theorem}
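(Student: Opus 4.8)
The plan is to realize $G\times S_{\tau}$ as a finite-dimensional hyperk\"ahler quotient in the style of Kronheimer's construction recalled in \ref{Subsection: Kronheimer's structure}, but with a pole (Nahm) boundary condition at one endpoint of $[0,1]$ dictated by the $\mathfrak{sl}_2$-triple $\tau$; this is precisely Bielawski's construction \cite{Bielawski, BielawskiComplex}, and the proof will amount to recalling it and checking that its output matches the conventions fixed in \ref{Subsection: The slice as a symplectic variety}. First I would fix an $\mathfrak{su}_2$-triple $(\beta_1,\beta_2,\beta_3)$ in $\mathfrak{k}$ whose complexification is $G$-conjugate to $\tau$; such a triple exists and is unique up to $K$-conjugacy by the Jacobson--Morozov theorem together with Kostant's theory of $\mathfrak{sl}_2$-triples, which is what makes the construction canonical. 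Then I would introduce the affine Banach manifold $\mathcal{M}_{\tau}$ of quadruples $(T_0,T_1,T_2,T_3)$ of $\mathfrak{k}$-valued functions on $(0,1]$ for which $T_0$ extends smoothly across $t=0$ and each $T_{\ell}$, $\ell=1,2,3$, has a simple pole at $t=0$ with residue $\beta_{\ell}$ (in suitable weighted Sobolev spaces), equipped with the $L^2$-metric and the three complex structures induced from $\mathbb{H}$, together with the gauge group $\mathcal{G}_0^{\tau}$ of maps $[0,1]\to K$ equal to $e$ at $t=1$ and compatible with the residue data at $t=0$. As in \ref{Subsection: Kronheimer's structure}, the hyperk\"ahler moment map for $\mathcal{G}_0^{\tau}$ is cut out by Nahm's equations, and $\mathcal{M}_{\tau}\ssslash\mathcal{G}_0^{\tau}$ is a finite-dimensional hyperk\"ahler manifold carrying a residual tri-Hamiltonian action of $\mathcal{G}^{\tau}/\mathcal{G}_0^{\tau}\cong K$ (gauge transformations evaluated at $t=1$).

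Next I would recall Bielawski's diffeomorphism $\Psi:\mathcal{M}_{\tau}\ssslash\mathcal{G}_0^{\tau}\xrightarrow{\ \sim\ }G\times S_{\tau}$, obtained by first solving the $I_1$-holomorphic (complex) Nahm equation and then using the remaining real equation to normalize the gauge, exactly as in \cite{Bielawski}. The identifications to extract from that analysis are: (a) the complex structure $I_1$ transported by $\Psi$ is the standard complex-algebraic structure on the affine variety $G\times S_{\tau}\subseteq G\times\mathfrak{g}$; (b) the holomorphic symplectic form $\omega_2+i\omega_3$ transported by $\Psi$ equals $j^*(\Omega_R)$, where $j:G\times S_{\tau}\hookrightarrow G\times\mathfrak{g}$ is the inclusion and $\Omega_R$ is the right-trivialized form \eqref{Equation: Right trivialized form}; and (c) the residual $K$-action at $t=1$ corresponds, after complexification, to the $G$-action $g\cdot(h,x)=(hg^{-1},x)$ of \ref{Subsection: The slice as a symplectic variety}, with complex moment map equal to $\mu_{\tau}$. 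Granting (a)--(c), the hypotheses of Definition \ref{Definition: (G,K)-hyperkahler variety} are all verified: the underlying holomorphic symplectic manifold is the affine symplectic variety $(G\times S_{\tau},\,j^*\Omega_R)$ equipped with the Hamiltonian $G$-action and moment map $\mu_{\tau}$ described in \ref{Subsection: The slice as a symplectic variety}, and this $G$-action restricts to the tri-Hamiltonian residual $K$-action coming from the hyperk\"ahler quotient. Canonicity follows because $\Psi$ is determined once $\tau$ is fixed: independence of the auxiliary $\mathfrak{su}_2$-triple is a consequence of the $K$-equivariance of the whole construction under conjugating the residue data.

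The main obstacle --- and the part I would cite Bielawski for rather than reproduce --- is the analysis underlying the pole boundary condition: the correct choice of weighted function spaces near $t=0$, the freeness and properness of the $\mathcal{G}_0^{\tau}$-action, smoothness of $\mu_{\mathrm{HK}}^{-1}(0)/\mathcal{G}_0^{\tau}$, and above all the proof that solving the complex Nahm equation with residue $\beta_1+i\beta_2$ (say) produces exactly an element of $S_{\tau}=\xi+\ker(\mathrm{ad}_{\eta})$, so that the complex structure and complex symplectic form manufactured by the gauge construction are literally the algebraic ones on $G\times S_{\tau}$ with form $\Omega_R$. All of this is established in \cite{Bielawski} (see also \cite{BielawskiComplex}), so in the write-up I would phrase the proof as a recollection of those results, the only genuinely new task being to confirm that Bielawski's output is consistent with our conventions --- in particular the use of $\Omega_R$ rather than $\Omega_L$, as flagged in the preceding Remark --- and hence that $G\times S_{\tau}$ is a $(G,K)$-hyperk\"ahler variety whose holomorphic symplectic data are exactly those of \ref{Subsection: The slice as a symplectic variety}.
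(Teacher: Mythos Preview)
The paper does not supply a proof of this theorem at all: it is stated as a known result and attributed to Bielawski \cite{Bielawski}, with the subsequent text immediately moving on to use it. Your proposal is therefore not competing with any argument in the paper; rather, you are sketching the content of the cited reference. As such a sketch it is accurate: Bielawski's construction is indeed the moduli space of Nahm data on $(0,1]$ with a simple pole at $t=0$ whose residues are an $\mathfrak{su}_2$-triple complexifying to (a conjugate of) $\tau$, modulo a gauge group fixing the boundary data, and the identification with $G\times S_{\tau}$ together with its $I_1$-complex structure, its holomorphic symplectic form $j^*\Omega_R$, and the residual $K$-action at $t=1$ are exactly what \cite{Bielawski} establishes. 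Your caveat that the analytic ingredients (weighted spaces, properness, smoothness, and the matching of the complex Nahm solution with $S_{\tau}$) should be cited rather than reproduced is the right call and matches the paper's own stance.
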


Now let $X$ be any $(G,K)$-hyperk\"ahler variety. Given an $\mathfrak{sl}_2$-triple $\tau$, note that product manifold $X\times (G\times S_{\tau})$ is naturally hyperk\"ahler and carries a free, diagonal $G$-action. It is then not difficult to check that $X\times (G\times S_{\tau})$ is a $(G,K)$-hyperk\"ahler variety, with underlying holomorphic symplectic structure equal to the natural product holomorphic symplectic structure on $X\times (G\times S_{\tau})$. With this in mind, we can define \textit{hyperk\"ahler slices} as follows. 

\begin{definition}\label{Definition: Hyperkahler slice}
Given a $(G,K)$-hyperk\"ahler variety $X$ and an $\mathfrak{sl}_2$-triple $\tau$, we refer to $(X\times(G\times S_{\tau}))\ssslash K$ as the \textit{hyperk\"ahler slice} for $X$ and $\tau$.  
\end{definition}

This construction can be used to produce a number of well-studied hyperk\"ahler manifolds, some of which are mentioned in the introduction of \cite{BielawskiComplex}. For several of these examples, there is a particularly concrete description of the underlying holomorphic symplectic manifold. Indeed, let $X$ and $\tau$ be as described in the definition above. Note that \eqref{Equation: Hyperkahler and holomorphic symplectic quotients} manifests as a map
\begin{equation}\label{Equation: Second special case}(X\times(G\times S_{\tau}))\ssslash K\rightarrow (X\times(G\times S_{\tau}))\dslash G,\end{equation} which features in the following rephrased version of \cite[Theorem 1]{Bielawski}.
 
\begin{theorem}[Bielawski]\label{Theorem: Hyperkahler slice theorem}
Let $\tau$ be an $\mathfrak{sl}_2$-triple, and let $(X,(I_{\ell},\omega_{\ell},b)_{\ell=1}^3)$ be a $(G,K)$-hyperk\"ahler variety with complex moment map $\mu:X\rightarrow\mathfrak{g}$. Consider the map \begin{equation}\label{Equation: Holomorphic symplectic isomorphism}(X\times(G\times S_{\tau}))\ssslash K \rightarrow \mu^{-1}(S_{\tau})\end{equation} obtained by composing \eqref{Equation: Second special case} with the isomorphism $(X\times(G\times S_{\tau}))\dslash G\xrightarrow{\cong}\mu^{-1}(S_{\tau})$ from Proposition \ref{Proposition: Variety isomorphism}(i). If the K\"ahler manifold $(X,I_{1},\omega_{1},b)$ has a $K$-invariant potential that is bounded from below on each $G$-orbit, then \eqref{Equation: Holomorphic symplectic isomorphism} is an isomorphism of holomorphic symplectic manifolds. 
\end{theorem}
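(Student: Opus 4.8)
The plan is to recognize \eqref{Equation: Holomorphic symplectic isomorphism} as an \emph{open} embedding of holomorphic symplectic manifolds coming from Section~\ref{Section: Preliminaries} and Proposition~\ref{Proposition: Variety isomorphism}, and then to upgrade it to a bijection by a Kempf--Ness argument powered by the hypothesized K\"ahler potential. Write $M:=X\times(G\times S_\tau)$, let $\mu_{\mathbb C}^M\colon M\to\mathfrak g$, $(x,(g,y))\mapsto\mu(x)+\mu_\tau(g,y)$, be its complex moment map, and let $\mu_{\mathrm{HK}}^M=(\mu_1^M,\mu_2^M,\mu_3^M)$ be its hyperk\"ahler moment map, so that $\mu_{\mathbb C}^M=\mu_2^M+i\mu_3^M$. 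By the discussion following \eqref{Equation: Hyperkahler and holomorphic symplectic quotients}, the map \eqref{Equation: Second special case} identifies $M\ssslash K$ with the \emph{open} subset $\bigl(G\cdot(\mu_{\mathrm{HK}}^M)^{-1}(0)\bigr)/G\subseteq(\mu_{\mathbb C}^M)^{-1}(0)/G=M\dslash G$, and does so as an embedding of holomorphic symplectic manifolds; composing with the holomorphic symplectic isomorphism $M\dslash G\xrightarrow{\ \cong\ }\mu^{-1}(S_\tau)$ of Proposition~\ref{Proposition: Variety isomorphism}(i)--(ii), the map \eqref{Equation: Holomorphic symplectic isomorphism} is an open embedding of holomorphic symplectic manifolds. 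It therefore suffices to prove surjectivity, i.e. that $G\cdot(\mu_{\mathrm{HK}}^M)^{-1}(0)=(\mu_{\mathbb C}^M)^{-1}(0)$; and since $\mu_{\mathbb C}^M$ is $G$-equivariant, $\mu_2^M$ and $\mu_3^M$ vanish on the whole $G$-orbit of any point of $(\mu_{\mathbb C}^M)^{-1}(0)$, so the problem reduces to showing that every $G$-orbit contained in $(\mu_{\mathbb C}^M)^{-1}(0)$ meets $(\mu_1^M)^{-1}(0)$.

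To run Kempf--Ness, equip $M$ with the $K$-invariant K\"ahler potential $\rho^M$ for $(I_1^M,\omega_1^M,b^M)$ given by the sum of the pullbacks along the two projections of the hypothesized $K$-invariant potential $\rho_X$ on $X$ and of the natural non-negative $K$-invariant $\omega_1$-potential $\rho_S$ on $G\times S_\tau$ (the analogue, for the Nahm-equations description of $G\times S_\tau$, of Proposition~\ref{Proposition: Kahler potential}; cf.\ \cite{Bielawski}). Fix $m=(x_0,(g_0,y_0))\in(\mu_{\mathbb C}^M)^{-1}(0)$ and set $\psi_m\colon G\to\mathbb R$, $\psi_m(g):=\rho^M(g\cdot m)$. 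As $\rho^M$ is $K$-invariant, $\psi_m$ is left-$K$-invariant and descends to $K\backslash G$, which the Cartan decomposition $G=K\cdot\exp(i\mathfrak k)$ identifies with the Riemannian symmetric space of noncompact type on $\exp(i\mathfrak k)$, whose geodesics through $Kg$ are the curves $t\mapsto K\exp(ti\eta)g$ with $\eta\in\mathfrak k$. The identity $\omega_1^M=2i\partial\overline\partial\rho^M$, together with the fact that the fundamental vector field of $i\eta\in\mathfrak g$ equals $I_1^M$ applied to that of $\eta\in\mathfrak k$, then yields the two standard features of this picture: \textbf{(a)} along each such geodesic $\tfrac{d^2}{dt^2}\big|_{t=0}\psi_m=b^M(\tilde\eta,\tilde\eta)\ge 0$, strictly positive by freeness of the $K$-action, so $\psi_m$ is strictly geodesically convex; and \textbf{(b)} $\tfrac{d}{dt}\big|_{t=0}\psi_m(\exp(ti\eta)g)=\pm\,(\mu_1^M(g\cdot m))(\eta)$ for every $\eta\in\mathfrak k$, so that $Kg$ is a critical point of $\psi_m$ exactly when $\mu_1^M(g\cdot m)=0$.

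It remains to produce a critical point, which we do by showing $\psi_m$ attains its infimum. Restricting to the $G$-orbit of $m$ and using $G$-equivariance of the two projections, $\psi_m(g)=\rho_X(g\cdot x_0)+\rho_S(g\cdot(g_0,y_0))$; on $K\backslash G$ the first summand is bounded below (the hypothesis on $\rho_X$, applied to the orbit $G\cdot x_0$), while the second is non-negative and \emph{proper} on $K\backslash G$ — this last being the one genuinely analytic input, namely Bielawski's estimate for solutions of Nahm's equations on $[0,1]$ with the singularity determined by $\tau$ \cite{Bielawski}. Hence $\psi_m$ is bounded below with compact sublevel sets, so it attains a minimum at some $Kg_\ast$; by \textbf{(b)}, $\mu_1^M(g_\ast\cdot m)=0$, and together with $\mu_2^M(g_\ast\cdot m)=\mu_3^M(g_\ast\cdot m)=0$ this gives $g_\ast\cdot m\in(\mu_{\mathrm{HK}}^M)^{-1}(0)$, so $m\in G\cdot(\mu_{\mathrm{HK}}^M)^{-1}(0)$. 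As $m$ was an arbitrary point of $(\mu_{\mathbb C}^M)^{-1}(0)$, we get $G\cdot(\mu_{\mathrm{HK}}^M)^{-1}(0)=(\mu_{\mathbb C}^M)^{-1}(0)$, and with Paragraph~1 this shows \eqref{Equation: Holomorphic symplectic isomorphism} is a bijective open embedding of holomorphic symplectic manifolds, hence an isomorphism of such.

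The formal skeleton above is routine; the main obstacle is supplying the two facts about $G\times S_\tau$ used in the previous paragraph — that it carries a $K$-invariant $\omega_1$-potential at all, and that this potential is proper along each orbit $G\times\{y\}$ — both of which rest on Bielawski's analysis of Nahm's equations with an irregular singularity, and which are precisely what lets the hypothesis on $X$ be weakened to mere boundedness below on $G$-orbits (boundedness below of $\rho_X$ plus properness of $\rho_S$ already force compact sublevel sets). A secondary care point is the sign bookkeeping in \textbf{(b)} relating the geodesic derivative of $\rho^M$ to the first real moment map $\mu_1^M$, together with verifying that the $G\times S_\tau$-factor genuinely contributes the asserted potential to the product, for which the computation underlying Proposition~\ref{Proposition: Kahler potential} serves as the template.
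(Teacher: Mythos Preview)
The paper does not supply its own proof of this statement: it is presented as a rephrasing of \cite[Theorem 1]{Bielawski} and is simply cited, with no argument given beyond the surrounding setup (Proposition~\ref{Proposition: Variety isomorphism} and the map \eqref{Equation: Second special case}). So there is no ``paper's proof'' to compare against; your proposal is effectively a reconstruction of Bielawski's original argument.

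That reconstruction is accurate in outline. The reduction to surjectivity via the open embedding \eqref{Equation: Hyperkahler and holomorphic symplectic quotients} is exactly what the paper's Section~\ref{Section: Preliminaries} sets up, and the Kempf--Ness mechanism you describe --- geodesic convexity of the potential on $K\backslash G$, identification of critical points with zeros of $\mu_1^M$, and existence of a minimum forced by boundedness-below of $\rho_X$ plus properness of $\rho_S$ --- is precisely Bielawski's method. You are also right to flag that the substantive content lies entirely in the two facts about $G\times S_\tau$: the existence of a $K$-invariant $\omega_1$-potential, and its properness on $G$-orbits. These are the heart of \cite{Bielawski} and come from growth estimates for solutions of Nahm's equations with a pole prescribed by $\tau$; your sketch treats them as a black box, which is appropriate here but means the proposal is a reduction to \cite{Bielawski} rather than an independent proof. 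One small point: strict convexity in \textbf{(a)} uses freeness of the $K$-action on $M$, which holds because $G$ (hence $K$) already acts freely on the $G\times S_\tau$ factor --- you might state this explicitly rather than leaving it implicit.
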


\begin{remark}
Bielawski speaks of hyperk\"ahler slices only when the hypotheses of Theorem \ref{Theorem: Hyperkahler slice theorem} are satisfied (see \cite[Section 1]{BielawskiComplex}). He then defines a hyperk\"ahler slice to be a hyperk\"ahler manifold of the form $\mu^{-1}(S_{\tau})$, where $\mu^{-1}(S_{\tau})$ is equipped with the hyperk\"ahler structure induced through the isomorphism \eqref{Equation: Holomorphic symplectic isomorphism}. In particular, Definition \ref{Definition: Hyperkahler slice} mildly generalizes Bielawski's original notion.  
\end{remark}

Let us briefly consider the hyperk\"ahler slice construction for $(G,K)$-hyperk\"ahler varieties of the form $(G\times_H\mathfrak{h}^{\perp},(I_{\ell}^H,\omega_{\ell}^H,b^H)_{\ell=1}^3)$, as introduced in \ref{Subsection: The hyperkahler structure on the cotangent bundle}. Accordingly, recall the notation adopted in \ref{Subsection: The hyperkahler structure on the cotangent bundle}. The function $\rho^H$ is bounded from below on all of $G\times_H\mathfrak{h}^{\perp}$ (see \eqref{Equation: First potential}), while we recall that $\rho^H$ is a $K$-invariant potential for the K\"ahler manifold $(G\times_H\mathfrak{h}^{\perp},(I_1^H,\omega_1^H,b^H))$ (see Proposition \ref{Proposition: Kahler potential}). It then follows from Theorem \ref{Theorem: Hyperkahler slice theorem} that
\begin{equation}\label{Equation: Second holomorphic symplectic isomorphism}\left((G\times_H\mathfrak{h}^{\perp})\times (G\times S_{\tau})\right)\ssslash K\cong \nu_H^{-1}(S_{\tau})\end{equation} as holomorphic symplectic manifolds for all $\mathfrak{sl}_2$-triples $\tau$ in $\mathfrak{g}$. We exploit this fact in what follows.      

\subsection{The regular Slodowy slice}

Recall that $\dim(\mathrm{ker}(\adj_x))\geq r$ for all $x\in\mathfrak{g}$, and that $x$ is called \textit{regular} if equality holds. Let $\mathfrak{g}_{\text{reg}}\subseteq\mathfrak{g}$ denote the set of all regular elements, which is known to be a $G$-invariant, open, dense subvariety of $\mathfrak{g}$. This leads to the notion of a \textit{regular} $\mathfrak{sl}_2$-triple, i.e. an $\mathfrak{sl}_2$-triple $\tau=(\xi,h,\eta)$ in $\mathfrak{g}$ for which $\xi\in\mathfrak{g}_{\text{reg}}$. Fix one such triple $\tau$ for the duration of this paper, and let $S_{\text{reg}}:=S_{\tau}$ denote the associated Slodowy slice. The slice $S_{\text{reg}}$ is known to be contained in $\mathfrak{g}_{\text{reg}}$, and to be a fundamental domain for the action of $G$ on $\mathfrak{g}_{\text{reg}}$ (see \cite[Theorem 8]{Kostant}). Note that this last sentence may be rephrased as follows: $x\in\mathfrak{g}$ belongs to $\mathfrak{g}_{\text{reg}}$ if and only if $x$ is $G$-conjugate to a point in $S_{\text{reg}}$, in which case $x$ is $G$-conjugate to a unique point in $S_{\text{reg}}$. 

As discussed in the \ref{Subsection: Description of results}, we wish to study the emptiness problem for hyperk\"ahler slices of the form $\left((G\times_H\mathfrak{h}^{\perp})\times (G\times S_{\text{reg}})\right)\ssslash K$. The following result is a crucial first step.  

\begin{proposition}\label{Proposition: Non-empty slice}
The hyperk\"ahler slice $\left((G\times_H\mathfrak{h}^{\perp})\times (G\times S_{\emph{reg}})\right)\ssslash K$ is non-empty if and only if $\mathfrak{h}^{\perp}\cap\mathfrak{g}_{\emph{reg}}\neq\emptyset$. 
\end{proposition}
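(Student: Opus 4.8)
The plan is to reduce everything to the holomorphic symplectic isomorphism \eqref{Equation: Second holomorphic symplectic isomorphism}. Since that isomorphism is in particular a bijection of sets, the hyperk\"ahler slice $\left((G\times_H\mathfrak{h}^{\perp})\times (G\times S_{\text{reg}})\right)\ssslash K$ is non-empty if and only if $\nu_H^{-1}(S_{\text{reg}})$ is non-empty, so the whole statement reduces to the assertion that $\nu_H^{-1}(S_{\text{reg}})\neq\emptyset$ if and only if $\mathfrak{h}^{\perp}\cap\mathfrak{g}_{\text{reg}}\neq\emptyset$. First I would unwind the relevant definitions: by \eqref{Equation: Complex moment map}, $\nu_H([(g,x)])=\mathrm{Ad}_g(x)$ for $(g,x)\in G\times\mathfrak{h}^{\perp}$, so that $\nu_H^{-1}(S_{\text{reg}})$ is non-empty exactly when there exist $g\in G$ and $x\in\mathfrak{h}^{\perp}$ with $\mathrm{Ad}_g(x)\in S_{\text{reg}}$.

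For the forward direction, I would argue as follows. Given $g$ and $x$ as above, the fact that $S_{\text{reg}}\subseteq\mathfrak{g}_{\text{reg}}$ together with the $G$-invariance of $\mathfrak{g}_{\text{reg}}$ forces $x=\mathrm{Ad}_{g^{-1}}\big(\mathrm{Ad}_g(x)\big)\in\mathfrak{g}_{\text{reg}}$; since also $x\in\mathfrak{h}^{\perp}$, this exhibits a point of $\mathfrak{h}^{\perp}\cap\mathfrak{g}_{\text{reg}}$, which is therefore non-empty.

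For the converse, I would use Kostant's structure theorem. If $x\in\mathfrak{h}^{\perp}\cap\mathfrak{g}_{\text{reg}}$, then because $S_{\text{reg}}$ is a fundamental domain for the $G$-action on $\mathfrak{g}_{\text{reg}}$ (see \cite[Theorem 8]{Kostant}), $x$ is $G$-conjugate to a point of $S_{\text{reg}}$; choosing $g\in G$ with $\mathrm{Ad}_g(x)\in S_{\text{reg}}$, the class $[(g,x)]\in G\times_H\mathfrak{h}^{\perp}$ lies in $\nu_H^{-1}(S_{\text{reg}})$, which is thus non-empty, and hence so is the hyperk\"ahler slice.

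I do not anticipate a real obstacle here: the substantive inputs --- the identification \eqref{Equation: Second holomorphic symplectic isomorphism} and Kostant's fundamental domain theorem --- are already available, and the remaining work is bookkeeping. The only place where a little care is needed is in correctly identifying the set $\nu_H^{-1}(S_{\text{reg}})$ in terms of the associated bundle $G\times_H\mathfrak{h}^{\perp}$ and the formula for $\nu_H$, so that both implications genuinely match this set-theoretic description.
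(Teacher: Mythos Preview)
Your proposal is correct and follows essentially the same approach as the paper: both arguments use the isomorphism \eqref{Equation: Second holomorphic symplectic isomorphism} to reduce non-emptiness of the hyperk\"ahler slice to the condition that the image $G\cdot\mathfrak{h}^{\perp}$ of $\nu_H$ meets $S_{\text{reg}}$, and then invoke Kostant's fundamental domain theorem together with the $G$-invariance of $\mathfrak{g}_{\text{reg}}$ to identify this with $\mathfrak{h}^{\perp}\cap\mathfrak{g}_{\text{reg}}\neq\emptyset$. The only cosmetic difference is that the paper phrases the intermediate step in terms of the moment-map image rather than the preimage, but the content is identical.
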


\begin{proof}
Using \eqref{Equation: Second holomorphic symplectic isomorphism}, we conclude that $\left((G\times_H\mathfrak{h}^{\perp})\times (G\times S_{\text{reg}})\right)\ssslash K\neq\emptyset$ if and only if the image of $\nu_H$ meets $S_{\text{reg}}$. This image is precisely $G\cdot\mathfrak{h}^{\perp}\subseteq\mathfrak{g}$ (see \eqref{Equation: Complex moment map}), reducing our task to one of proving that $G\cdot\mathfrak{h}^{\perp}\cap S_{\text{reg}}\neq\emptyset$ if and only if $\mathfrak{h}^{\perp}\cap\mathfrak{g}_{\text{reg}}\neq\emptyset$. To prove this, we simply appeal to the discussion of $S_{\text{reg}}$ above and note that $x\in\mathfrak{h}^{\perp}$ belongs to $\mathfrak{g}_{\text{reg}}$ if and only if $x$ is $G$-conjugate to a point in $S_{\text{reg}}$. 
\end{proof}

\section{The spherical geometry of $G/H$}\label{Section: The spherical geometry of T*(G/H)}

\subsection{The image of the moment map.}\label{subsection: image of moment map}

Let us continue with the notation set in the introduction of Section \ref{Section: The hyperkahler geometry of T*(G/H)}. Choose opposite Borel subgroups $B,B_{-}\subseteq G$, declaring the former to be the positive Borel and the latter to be the negative Borel. It follows that $T:=B\cap B_{-}$ is a maximal torus of $G$, and we shall let $\mathfrak{b}$, $\mathfrak{b}_{-}$, and $\mathfrak{t}$ denote the Lie algebras of $B$, $B_{-}$, and $T$, respectively. We thus have a weight lattice $\Lambda\subseteq\mathfrak{t}^*$ and canonical group isomorphisms $\Lambda\cong\mathrm{Hom}(T,\mathbb{C}^{\times})\cong\mathrm{Hom}(B,\mathbb{C}^{\times})$, where $\mathrm{Hom}$ is taken in the category of algebraic groups. We also have sets of roots $\Delta\subseteq\Lambda$, positive roots $\Delta_{+}\subseteq\Delta$, negative roots $\Delta_{-}\subseteq\Delta$, and simple roots $\Pi\subseteq\Delta_{+}$. Note that by definition
$$\mathfrak{b}=\mathfrak{t}\oplus\bigoplus_{\alpha\in\Delta_{+}}\mathfrak{g}_{\alpha}\quad \text{ and }\quad \mathfrak{b}_{-}=\mathfrak{t}\oplus\bigoplus_{\alpha\in\Delta_{-}}\mathfrak{g}_{\alpha},$$ where $\mathfrak{g}_{\alpha}$ is the root space associated to $\alpha\in\Delta$.
 
We now establish two important conventions. To this end, recall the isomorphism \eqref{Equation: Killing isomorphism} between the adjoint and coadjoint representations of $G$. Our first convention is to use $(\cdot)^{\vee}$ for both \eqref{Equation: Killing isomorphism} and its inverse, so that the inverse will presented as  $$\mathfrak{g}^*\xrightarrow{\cong}\mathfrak{g},\quad \phi\mapsto\phi^{\vee},\quad\phi\in\mathfrak{g}^*.$$ As for our second convention, note that the map $\mathfrak{g}^*\rightarrow\mathfrak{t}^*$ restricts to an isomorphism from the image of $\mathfrak{t}$ under \eqref{Equation: Killing isomorphism} to $\mathfrak{t}^*$. We will use this isomorphism to regard $\mathfrak{t}^*$ as belonging to $\mathfrak{g}^*$.

Now let $Y$ be a smooth, irreducible $G$-variety having field of rational functions $\mathbb{C}(Y)$, noting that $\mathbb{C}(Y)$ is then a $G$-module. A non-zero $f\in\mathbb{C}(Y)$ is called a $B$\textit{-semi-invariant rational function of weight} $\lambda\in\Lambda$ if 
$b\cdot f=\lambda(b)f$ for all $b\in B$. Those $\lambda$ admitting such an $f$ form the \textit{weight lattice of} $Y$, i.e.
$$\Lambda_Y:=\{\lambda\in\Lambda:\exists\text{ a }B\text{-semi-invariant rational function on $Y$ of weight }\lambda\}.$$
The weight lattice of $Y$ can also be viewed as the character lattice of a quotient of $T$, once we appeal to Knop's local structure theorem \cite[Theorem 4.7]{TimashevBook}. This theorem gives a parabolic subgroup $P\subseteq G$ that contains $B$, has a Levi decomposition $P=P_{u}L$ with $T\subseteq L$, and satisfies the following property: there exists a locally closed affine $P$-stable subvariety $Z\subseteq Y$ such that $P_{u}\times Z\to Y$ maps surjectively onto an open affine subset $Y_{0}$ of Y. One also has $[L,L]\subseteq L_0\subseteq L$, where $L_0$ is the kernel of the $L$-action on $Z$.
The quotient $A_{Y}:=L/L_{0}$ is a torus that acts freely on $Z$, and there exists an affine variety $C$ with a trivial $L$-action such that $Z\cong A_Y\times C$ as $L$-varieties. 
It follows that $\Lambda_{Y}=\Hom(A_{Y},\bbC^{\times})$.

The subspace $\laa^{*}_{Y}:=\Lambda_{Y}\otimes_{\bbZ}\bbC\subseteq\mathfrak{t}^*$ is sometimes called the \textit{Cartan space} of the $G$-variety $Y$.
Let $\Lambda_Y^{\vee}\subseteq\mathfrak{t}$ and $\laa_{Y}\subseteq\mathfrak{t}$ denote the preimage and image of $\Lambda_Y$ and $\laa^{*}_{Y}$ under \eqref{Equation: Killing isomorphism}, respectively, noting that \begin{equation}\label{Equation: Subtorus} \widetilde{A}_Y:=\Lambda_Y^{\vee}\otimes_{\mathbb{Z}}\mathbb{C}^{\times}\end{equation} is a subtorus of $T$ with Lie algebra $\laa_{Y}$. We shall also refer to $\mathfrak{a}_Y$ as the Cartan space of $Y$.

\begin{example}\label{Example: Cartan space of G/T}
In what follows, we compute the Cartan space of $G/T$. Let $\Lambda_{+}\subseteq\mathfrak{t}^*$ denote the set of dominant weights of $G$, and let $V_{\lambda}$ be the irreducible $G$-module of highest weight $\lambda\in\Lambda_{+}$. Recall the following classical fact about $\bbC[G/T]$, the coordinate ring of $G/T$:
$$\bbC[G/T]\cong\bigoplus_{\Lambda\in\Lambda_{+}}(V_{\lambda}^*)^{\oplus d_{\lambda}}$$
as $G$-modules, where $d_{\lambda}:=\dim((V_{\lambda})^T)$ and $(V_{\lambda})^T$ is the subspace of $T$-fixed vectors in $V_{\lambda}$. Note that $d_{\lambda}\neq 0$ if and only if $\lambda$ lies in the root lattice $Q\subseteq\mathfrak{t}^*$. Note also that $(V_{\lambda})^*\cong V_{-w_0\lambda}$ as $G$-modules, where $w_0$ is the longest element of the Weyl group $W:=N_G(T)/T$. It follows that for $\lambda\in\Lambda_{+}$, $V_{\lambda}$ is an irreducible summand of $\bbC[G/T]$ if and only if $\lambda\in -w_0(\Lambda_{+}\cap Q) = \Lambda_{+}\cap Q$. Since $\bbC[G/T]$ is a $G$-submodule of $\bbC(G/T)$, this implies that $\Lambda_{+}\cap Q$ is contained in $\Lambda_{G/T}$. Now observe that $\Lambda_{+}\cap Q$ generates $\lat^*$ over $\mathbb{C}$, yielding $\laa_{G/T}^*=\Lambda_{G/T}\otimes_{\mathbb{Z}}\mathbb{C}=\mathfrak{t}^*$. We also conclude that $\laa_{G/T}=\lat$.
\end{example}

We now recall a key geometric feature of the Cartan space construction. Let $Y$ be any smooth, irreducible $G$-variety and consider the canonical lift of the $G$-action on $Y$ to a $G$-action on $T^*Y$. The latter action is Hamiltonian with respect to the standard symplectic form on $T^*Y$, and there is a distinguished moment map $\mu_Y:T^*Y\rightarrow\mathfrak{g}$. Lemma 3.1 and Corollary 3.3 from \cite{KnopAsymptotic} then combine to give the following equality of closures in $\mathfrak{g}$.

\begin{theorem}[Knop]\label{Theorem: Knop's theorem}
If $Y$ is a smooth, irreducible, quasi-affine $G$-variety, then $\overline{\mu_Y(T^*Y)}=\overline{G\cdot\mathfrak{a}_Y}$.
\end{theorem}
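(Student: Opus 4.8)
The plan is to recognize that both $\overline{\mu_Y(T^*Y)}$ and $\overline{G\cdot\laa_Y}$ are closed, irreducible, $G$-stable, conical subvarieties of $\lag$, so that the asserted equality reduces to proving a single inclusion together with an equality of dimensions. Indeed, $\overline{\mu_Y(T^*Y)}$ is irreducible because $T^*Y$ is (as $Y$ is smooth and irreducible and $G$ is connected), $G$-stable by $G$-equivariance of $\mu_Y$, and conical because the fibrewise dilation action of $\mathbb{C}^\times$ on $T^*Y$ intertwines, via $\mu_Y$, with scaling on $\lag$. Likewise, $\overline{G\cdot\laa_Y}$ is the closure of the image of the irreducible variety $G\times\laa_Y$ under $(g,x)\mapsto\mathrm{Ad}_g(x)$, hence irreducible, $G$-stable, and, since $\laa_Y\subseteq\lat$ is a linear subspace, conical. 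It therefore suffices to prove $\overline{G\cdot\laa_Y}\subseteq\overline{\mu_Y(T^*Y)}$ and $\dim\overline{G\cdot\laa_Y}=\dim\overline{\mu_Y(T^*Y)}$.

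For the inclusion I would invoke the local structure theorem \cite[Theorem 4.7]{TimashevBook} recalled above: there is an open affine $P$-stable $Y_0\subseteq Y$ with an isomorphism $P_u\times Z\xrightarrow{\sim}Y_0$ and $Z\cong A_Y\times C$ as $L$-varieties, where $A_Y=L/L_0$ and $\laa_Y^*=\Lambda_Y\otimes_{\mathbb{Z}}\mathbb{C}$. Since $T^*Y_0$ is open and dense in $T^*Y$, it is enough to exhibit covectors in $T^*Y_0$ whose moment-map images are dense in $G\cdot\laa_Y$. At a point of $P_u\times A_Y\times C$ corresponding to $(e,a,c)$, consider the covectors annihilating the tangent directions along $P_u$ and along $C$; these form a copy of $T^*_aA_Y\cong\laa_Y^*$. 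Because the $L$-action on the $A_Y$-factor is by translations in the torus $A_Y$, whose Lie algebra lies in $\lat$, a direct computation of $\mu_Y$ on such covectors (under the convention $\lat^*\hookrightarrow\lag^*\cong\lag$) identifies their images with the corresponding elements of $\laa_Y$. Letting $a$, $c$, and the $P_u$-coordinate vary, then applying $G$-equivariance, shows $G\cdot\laa_Y\subseteq\mu_Y(T^*Y)$, whence the inclusion after taking closures.

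For the dimension equality, note first that $\dim\overline{G\cdot\laa_Y}=\dim G+\dim\laa_Y-\dim Z_G(\laa_Y)$, since a generic $x\in\laa_Y$ has $Z_G(x)=Z_G(\laa_Y)$ while $G\times\laa_Y\to\lag$ has generic fibres of dimension $\dim Z_G(\laa_Y)$ (the remaining freedom being a finite group, the ``little Weyl group'', acting on $\laa_Y$). It then remains to establish independently that the generic fibre of $\mu_Y:T^*Y\to\lag$ over a point of its image has the complementary dimension $\dim T^*Y-\dim G-\dim\laa_Y+\dim Z_G(\laa_Y)$, so that $\dim\overline{\mu_Y(T^*Y)}=\dim G+\dim\laa_Y-\dim Z_G(\laa_Y)$ as well; combined with irreducibility and the inclusion above, this forces the two varieties to coincide. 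I expect this to be the main obstacle: the conormal covectors of the previous paragraph only detect $G\cdot\laa_Y$ directly, so bounding the image of a \emph{generic} covector — equivalently, proving the reverse inclusion $\mu_Y(T^*Y)\subseteq\overline{G\cdot\laa_Y}$, i.e.\ that the moment image is no larger than expected — requires the full strength of Knop's analysis of the symplectic geometry of $T^*Y$ (the coisotropy of generic moment-map fibres and the invariant collective motion). It is precisely here that Lemma 3.1 and Corollary 3.3 of \cite{KnopAsymptotic} are brought to bear. One also sees why the quasi-affineness hypothesis cannot be dropped: for projective $Y$ such as $G/B$ the conclusion fails, the moment image being the nilpotent cone while $\laa_{G/B}=0$.
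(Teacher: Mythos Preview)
The paper does not supply its own proof of this theorem. It is stated as a result of Knop, and the sentence immediately preceding it says only that the equality follows by combining Lemma~3.1 and Corollary~3.3 of \cite{KnopAsymptotic}. There is therefore no argument in the paper to compare your proposal against.

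Your proposal is consistent with this: you set up a reasonable strategy (one inclusion plus a dimension count between two irreducible closed cones), give a plausible sketch for the inclusion $\overline{G\cdot\laa_Y}\subseteq\overline{\mu_Y(T^*Y)}$ via the local structure theorem, and then explicitly concede that the hard direction---bounding $\overline{\mu_Y(T^*Y)}$ from above, equivalently establishing the dimension equality---requires exactly the results from \cite{KnopAsymptotic} that the paper cites. In that sense your treatment and the paper's converge: both defer the substantive content to Knop.

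Two minor remarks on your outline. First, the inclusion sketch is on the right track but is imprecise at the point where you assert that the moment image of a covector at $(e,a,c)$ annihilating the $P_u$- and $C$-directions lands in $\laa_Y$: one must verify the pairing with \emph{all} of $\lag$, not just with $\lat$, and this uses that $[L,L]\subseteq L_0$ acts trivially on the $A_Y$-factor together with the behaviour of the $P_u$-directions under the moment map. Second, your closing observation about $G/B$ (moment image the nilpotent cone, $\laa_{G/B}=0$) is correct and is a good illustration of why quasi-affineness is essential.
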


\subsection{$\mathfrak{a}$-regularity}\label{subsection: The cotangent bundle of a homogeneous space}

Recall the notation set in the introduction to Section \ref{Section: The hyperkahler geometry of T*(G/H)}, which we now use together with the notation of \ref{subsection: image of moment map}. It is then not difficult to prove that $\mathfrak{a}_{G/H}$ depends only on the pair $(\lag,\lah)$. For this reason, we set $\laa(\lag,\lah)^{*}:=\laa_{G/H}^*$ and $\laa(\lag,\lah):=\laa_{G/H}$. We will sometimes denote $\laa(\lag,\lah)$ (resp. $\laa(\lag,\lah)^*$) by $\mathfrak{a}$ (resp. $\laa^*$) when the underlying pair $(\lag,\lah)$ is clear from context. 

\begin{definition}\label{Definition: a-regular}
We say that the pair $(G,H)$ or the corresponding pair $(\lag,\lah)$ of Lie algebras is $\laa$-\textit{regular} if $\laa(\lag,\lah)$ contains a regular element of $\lag$.
\end{definition}

We now give a few characterizations of $\mathfrak{a}$-regularity. In what follows, $\widetilde{A}_{G/H}$ is the subtorus of $T$ defined by setting $Y=G/H$ in \eqref{Equation: Subtorus} and $Z_G(\widetilde{A}_{G/H})$ consists of all $g\in G$ that commute with every element of $\widetilde{A}_{G/H}$. We also let $Z_G(\mathfrak{a})$ be the subgroup of all $g\in G$ that fix $\mathfrak{a}$ pointwise, and we let $\laz_{\mathfrak{g}}(\mathfrak{a})$ be the subspace of all $x\in\mathfrak{g}$ that commute with every element of $\mathfrak{a}$.

\begin{proposition}\label{prop: regular element}
With all notation as described above, the following conditions are equivalent.
\begin{itemize}
\item[(i)] $(G,H)$ is $\laa$-regular;
\item[(ii)] $\mathfrak{h}^{\perp}\cap\mathfrak{g}_{\emph{reg}}\neq\emptyset$; 
\item[(iii)] $Z_{G}(\mathfrak{a})=T$.
\end{itemize}
\end{proposition}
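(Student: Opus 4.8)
The plan is to establish the chain of implications (iii) $\Rightarrow$ (i) $\Rightarrow$ (ii) $\Rightarrow$ (iii), using Knop's theorem (Theorem \ref{Theorem: Knop's theorem}) as the central tool that connects the Cartan space $\mathfrak{a}=\mathfrak{a}_{G/H}$ to the moment map image. The equivalence (i) $\Leftrightarrow$ (ii) will actually follow from combining the other two, but let me sketch the direct route. For (i) $\Rightarrow$ (ii): if $x\in\mathfrak{a}$ is regular, then since $\mathfrak{a}\subseteq\mathfrak{t}$ corresponds (under the Killing identification) to a subspace of $\mathfrak{t}^*$ sitting inside $\mathfrak{g}^*$, and $\mathfrak{a}$ is precisely the Cartan space, one knows $\mathfrak{a}\subseteq\mathfrak{a}_{G/H}$ is contained in the closure of $\mu_{G/H}(T^*(G/H))=\overline{G\cdot\mathfrak{a}}$; but more to the point, the moment map $\nu_H$ of Equation \eqref{Equation: Complex moment map} has image $G\cdot\mathfrak{h}^{\perp}$, and Theorem \ref{Theorem: Knop's theorem} gives $\overline{G\cdot\mathfrak{h}^{\perp}}=\overline{\mu_{G/H}(T^*(G/H))}=\overline{G\cdot\mathfrak{a}}$. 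Since $\mathfrak{g}_{\text{reg}}$ is open and $G$-invariant, $\mathfrak{a}\cap\mathfrak{g}_{\text{reg}}\neq\emptyset$ forces $\overline{G\cdot\mathfrak{a}}\cap\mathfrak{g}_{\text{reg}}\neq\emptyset$, hence $(G\cdot\mathfrak{h}^{\perp})\cap\mathfrak{g}_{\text{reg}}\neq\emptyset$, and by $G$-invariance of $\mathfrak{g}_{\text{reg}}$ we get $\mathfrak{h}^{\perp}\cap\mathfrak{g}_{\text{reg}}\neq\emptyset$.

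For (ii) $\Rightarrow$ (i), I would run this in reverse: if $\mathfrak{h}^{\perp}$ meets $\mathfrak{g}_{\text{reg}}$, then $G\cdot\mathfrak{h}^{\perp}$ meets $\mathfrak{g}_{\text{reg}}$, so by Theorem \ref{Theorem: Knop's theorem} the closure $\overline{G\cdot\mathfrak{a}}$ meets the open set $\mathfrak{g}_{\text{reg}}$, hence $G\cdot\mathfrak{a}$ itself meets $\mathfrak{g}_{\text{reg}}$ (an open dense subset meets any set whose closure it intersects — more carefully, $G\cdot\mathfrak{a}$ is a constructible $G$-invariant set that is dense in an irreducible variety containing a regular element in its closure, and one needs that $G\cdot\mathfrak{a}$ contains a dense open subset of its closure). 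Then $G$-invariance of $\mathfrak{g}_{\text{reg}}$ gives $\mathfrak{a}\cap\mathfrak{g}_{\text{reg}}\neq\emptyset$. I should be slightly careful that $G\cdot\mathfrak{a}$, not merely its closure, meets $\mathfrak{g}_{\text{reg}}$; the standard fact that the image of a morphism of varieties is constructible and hence contains a dense open subset of its closure handles this, together with irreducibility.

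The equivalence with (iii) is where the torus-centralizer bookkeeping enters. Recall $\mathfrak{a}=\mathfrak{a}_{G/H}\subseteq\mathfrak{t}$ and $\widetilde{A}_{G/H}=\Lambda_{G/H}^\vee\otimes_{\mathbb{Z}}\mathbb{C}^\times$ is a subtorus of $T$ with Lie algebra $\mathfrak{a}$. Since $\widetilde A_{G/H}$ is a torus, $Z_G(\mathfrak a)=Z_G(\widetilde A_{G/H})$, and this is a connected reductive subgroup of $G$ containing $T$; in particular $T\subseteq Z_G(\mathfrak a)$ always. The key observation is that $Z_G(\mathfrak a)$ contains a maximal torus $T$, so $Z_G(\mathfrak a)=T$ if and only if $Z_G(\mathfrak a)$ has rank equal to its dimension, i.e.\ is a torus, i.e.\ $\mathfrak{z}_{\mathfrak g}(\mathfrak a)=\mathfrak t$. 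Now for (iii) $\Rightarrow$ (i): if $Z_G(\mathfrak a)=T$ then $\mathfrak{z}_{\mathfrak g}(\mathfrak a)=\mathfrak t$, and I claim a generic element $x$ of $\mathfrak a$ then has centralizer $\mathfrak{z}_{\mathfrak g}(x)=\mathfrak t$: indeed $\mathfrak{z}_{\mathfrak g}(x)\supseteq\mathfrak t$ (as $x\in\mathfrak t$), and $\mathfrak{z}_{\mathfrak g}(x)=\mathfrak t\oplus\bigoplus_{\alpha(x)=0}\mathfrak g_\alpha$, so $\mathfrak{z}_{\mathfrak g}(x)=\mathfrak t$ precisely when $\alpha(x)\neq 0$ for all roots $\alpha$; each condition $\alpha(x)=0$ cuts out a proper subspace of $\mathfrak a$ unless $\alpha|_{\mathfrak a}\equiv 0$, and $\alpha|_{\mathfrak a}\equiv 0$ for all $\alpha$ would force $\mathfrak a\subseteq\mathfrak z_{\mathfrak g}(\mathfrak a)$'s would-be extra root spaces — more precisely it would force $\mathfrak{z}_{\mathfrak g}(\mathfrak a)\supsetneq\mathfrak t$, contradicting (iii). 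Hence on a dense open subset of $\mathfrak a$ we have $\mathfrak{z}_{\mathfrak g}(x)=\mathfrak t$, so $\dim\mathfrak{z}_{\mathfrak g}(x)=\mathrm{rk}\,\mathfrak g$, i.e.\ $x$ is regular, giving (i). Conversely (i) $\Rightarrow$ (iii): if $x\in\mathfrak a$ is regular then $\mathfrak{z}_{\mathfrak g}(x)$ is abelian of dimension $\mathrm{rk}\,\mathfrak g$; since $x\in\mathfrak t$ and $\mathfrak t\subseteq\mathfrak{z}_{\mathfrak g}(x)$ with both of dimension $\mathrm{rk}\,\mathfrak g$, we get $\mathfrak{z}_{\mathfrak g}(x)=\mathfrak t$, hence $\mathfrak z_{\mathfrak g}(\mathfrak a)\subseteq\mathfrak z_{\mathfrak g}(x)=\mathfrak t$, forcing $\mathfrak z_{\mathfrak g}(\mathfrak a)=\mathfrak t$; since $Z_G(\mathfrak a)=Z_G(\widetilde A_{G/H})$ is connected with Lie algebra $\mathfrak t$ and contains $T$, it equals $T$.

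The main obstacle, I expect, is the careful handling in (ii) $\Rightarrow$ (i) of passing from "the closure $\overline{G\cdot\mathfrak a}$ meets the open set $\mathfrak g_{\text{reg}}$" to "$G\cdot\mathfrak a$ itself meets $\mathfrak g_{\text{reg}}$" — this needs constructibility of images of morphisms (Chevalley) plus the observation that $\mathfrak g_{\text{reg}}$ is open \emph{and dense}, so it meets the dense open subset of $\overline{G\cdot\mathfrak a}$ contained in $G\cdot\mathfrak a$. A secondary subtlety is confirming $Z_G(\mathfrak a)$ is connected (it is the centralizer of a torus in a connected group, hence connected) so that the Lie-algebra condition $\mathfrak z_{\mathfrak g}(\mathfrak a)=\mathfrak t$ upgrades to the group-level statement $Z_G(\mathfrak a)=T$. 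Everything else is the standard root-space description of centralizers of semisimple elements.
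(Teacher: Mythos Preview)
Your proposal is correct and follows essentially the same approach as the paper: Knop's theorem (Theorem~\ref{Theorem: Knop's theorem}) plus constructibility of $G\cdot\mathfrak{a}$ for (i)~$\Leftrightarrow$~(ii), and the root-space description of $\mathfrak{z}_{\mathfrak{g}}(\mathfrak{a})$ together with connectedness of $Z_G(\mathfrak{a})=Z_G(\widetilde{A}_{G/H})$ for (i)~$\Leftrightarrow$~(iii). The paper organizes (i)~$\Leftrightarrow$~(iii) contrapositively (not $\mathfrak{a}$-regular $\Leftrightarrow$ $\mathfrak{a}$ lies in some root hyperplane $\Leftrightarrow$ $\mathfrak{z}_{\mathfrak{g}}(\mathfrak{a})\supsetneq\mathfrak{t}$, using irreducibility of $\mathfrak{a}$), whereas you argue the direct implications, but the content is the same; note also that your step ``$\overline{G\cdot\mathfrak{a}}\cap\mathfrak{g}_{\text{reg}}\neq\emptyset$ hence $(G\cdot\mathfrak{h}^{\perp})\cap\mathfrak{g}_{\text{reg}}\neq\emptyset$'' in (i)~$\Rightarrow$~(ii) needs the same constructibility argument you spell out for the reverse direction.
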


\begin{proof}
We begin by proving that $\mathfrak{h}^{\perp}\cap\mathfrak{g}_{\text{reg}}\neq\emptyset$ if and only if $(G,H)$ is $\mathfrak{a}$-regular. To show the forward implication, assume that $\mathfrak{h}^{\perp}\cap\mathfrak{g}_{\text{reg}}\neq\emptyset$. Identifying $T^*(G/H)$ with $G\times_H\mathfrak{h}^{\perp}$ and recalling the moment map $\nu_H$ (see \eqref{Equation: Complex moment map}), Theorem \ref{Theorem: Knop's theorem} implies that
$$\overline{\nu_H(G\times_H\mathfrak{h}^{\perp})}=\overline{G\cdot\mathfrak{a}}.$$ This amounts to the statement that
$$\overline{G\cdot\mathfrak{h}^{\perp}}=\overline{G\cdot\mathfrak{a}}.$$
Since $\mathfrak{h}^{\perp}\cap\mathfrak{g}_{\text{reg}}\neq\emptyset$ by hypothesis, we must have $\overline{G\cdot\mathfrak{a}}\cap\mathfrak{g}_{\text{reg}}\neq\emptyset$. Note also that $G\cdot\mathfrak{a}$ is a constructible subset of $\mathfrak{g}$, so that $G\cdot \mathfrak{a}$ intersects every non-empty open subset of $\overline{G\cdot\mathfrak{a}}$. These last two sentences imply that $G\cdot \mathfrak{a}\cap\mathfrak{g}_{\text{reg}}\neq\emptyset$, which is equivalent to $\mathfrak{a}\cap\mathfrak{g}_{\text{reg}}\neq\emptyset$. We conclude that $(G,H)$ is $\mathfrak{a}$-regular. In an analogous way, one argues that $(G,H)$ being $\mathfrak{a}$-regular implies $\mathfrak{h}^{\perp}\cap\mathfrak{g}_{\text{reg}}\neq\emptyset$.

We are reduced to establishing that $(G,H)$ is $\mathfrak{a}$-regular if and only if $Z_{G}(\mathfrak{a})=T$. Accordingly, recall that an element of $\mathfrak{t}$ is regular if and only if it does not lie on any root hyperplane. It follows that $(G,H)$ is not $\mathfrak{a}$-regular if and only if $\mathfrak{a}$ belongs to the union of all root hyperplanes. Since $\mathfrak{a}$ is irreducible, this is equivalent to $\mathfrak{a}$ being contained in a particular root hyperplane, i.e. $\mathfrak{a}\subseteq\mathrm{ker}(\alpha)$ for some $\alpha\in\Delta$. This holds if and only if $\mathfrak{g}_{\alpha}\subseteq \laz_{\mathfrak{g}}(\mathfrak{a})$ for some $\alpha\in\Delta$. Now note that $\laz_{\mathfrak{g}}(\mathfrak{a})$ is a $T$-invariant subspace of $\mathfrak{g}$ containing $\mathfrak{t}$, meaning that $$\laz_{\mathfrak{g}}(\mathfrak{a})=\mathfrak{t}\oplus\bigoplus_{\alpha\in S}\mathfrak{g}_{\alpha}$$ for some subset $S\subseteq\Delta$. It follows that $\mathfrak{g}_{\alpha}\subseteq \laz_{\mathfrak{g}}(\mathfrak{a})$ for some $\alpha\in\Delta$ if and only if $\laz_{\mathfrak{g}}(\mathfrak{a})\neq\mathfrak{t}$. The second of these conditions is equivalent to having $Z_G(\mathfrak{a})\neq T$, if one knows $Z_G(\mathfrak{a})$ to be connected and have a Lie algebra of $\laz_{\mathfrak{g}}(\mathfrak{a})$. Connectedness follows from the observation that $Z_G(\mathfrak{a})=Z_G(\widetilde{A}_{G/H})$ (see \cite[Theorem 24.4.8]{Tauvel}), together with the fact that centralizers of tori are connected (see \cite[Proposition 28.3.1]{Tauvel}).  At the same time, it is clear that $\laz_{\mathfrak{g}}(\mathfrak{a})$ is the Lie algebra of $Z_G(\mathfrak{a})$ (cf. \cite[Proposition 24.3.6]{Tauvel}). This completes the proof. 
\end{proof}

Let $H$ act on a complex algebraic variety $X$. A subgroup $\widetilde{H}\subseteq H$ is called a \textit{generic stabilizer} for this action if there exists a non-empty open dense subset $U\subseteq X$ with the following property: the $H$-stabilizer of every $x\in U$ is conjugate to $\widetilde{H}$. A generic stabilizer is known to exist if $X$ is a linear representation of $H$ \cite{MR0294336}. We therefore have a generic stabilizer for the $H$-action on $\lah^{\perp}$, and we denote it by $H_{*}$. This group is known to be reductive (see \cite[Theorem 9.1]{TimashevBook}).

\begin{remark}\label{Remark: Generic stabilizer} A generic stabilizer is unique up to conjugation, meaning that $H_{*}$ more appropriately denotes a conjugacy class of subgroups in $H$. However, we shall always take $H_{*}$ to be a fixed subgroup in this conjugacy class.
\end{remark} 

Now recall our discussion of the the local structure theorem for a smooth, irreducible $G$-variety $Y$, as well as the notation introduced in that context (see \ref{subsection: image of moment map}). If $Y=G/H$, then the group $L_0$ turns out to be precisely $H_{*}$ (see \cite[Section 8]{KnopWeyl}).

\begin{corollary}\label{cor: gen stab}
The pair $(G,H)$ is $\laa$-regular if and only if the connected component of the identity in $H_{*}$ is abelian. 
\end{corollary}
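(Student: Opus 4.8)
The plan is to reduce Corollary \ref{cor: gen stab} to Proposition \ref{prop: regular element} by relating the abelianness of $H_*^{\circ}$ to the condition $Z_G(\mathfrak{a}) = T$. The key input is the identification $L_0 = H_*$ coming from \cite[Section 8]{KnopWeyl}, together with the fact (recorded just before Definition \ref{Definition: a-regular}) that the quotient torus $A_{G/H} = L/L_0$ has character lattice $\Lambda_{G/H}$, so that $\dim A_{G/H} = \dim \mathfrak{a}$ and $L/H_*$ is a torus of this dimension.

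First I would analyze the Levi subgroup $L$ furnished by the local structure theorem. Because $T \subseteq L$ and $[L,L] \subseteq L_0 = H_*$, the group $L$ has abelian reductive quotient $L/H_*$, and $H_*^{\circ}$ is a normal subgroup of $L$ containing $[L,L]$. From this I would extract the structure of $L^{\circ}$: it is generated by $T$ and $H_*^{\circ}$, and since $L$ is a Levi subgroup of $G$ containing $T$, its roots form a subsystem $\Delta_L \subseteq \Delta$ with $\mathfrak{l} = \mathfrak{t} \oplus \bigoplus_{\alpha \in \Delta_L}\mathfrak{g}_\alpha$. The key point is to show $\Delta_L$ is exactly the set of roots vanishing on $\mathfrak{a}$, i.e. $\mathfrak{l} = \laz_{\mathfrak{g}}(\mathfrak{a})$ — equivalently $L = Z_G(\widetilde{A}_{G/H})$. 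This should follow from the local structure theorem setup: $A_{G/H}$ acts on $Z$ with weights spanning $\mathfrak{a}^*$, the roots of $L_0 = H_*$ are those orthogonal to $\Lambda_{G/H}$, and $L$ is precisely the Levi whose semisimple part kills the $A_{G/H}$-grading. So $L = Z_G(\mathfrak{a})$ by the connectedness argument already used in the proof of Proposition \ref{prop: regular element}.

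Granting $L = Z_G(\mathfrak{a})$, the corollary becomes a purely structural statement: $(G,H)$ is $\mathfrak{a}$-regular $\iff Z_G(\mathfrak{a}) = T \iff L = T \iff [L,L]$ is trivial and $\dim L = \dim T$. Now $[L,L] \subseteq H_*^{\circ} \subseteq L$, and $L/H_*$ is a torus, so $L$ is abelian if and only if $H_*^{\circ}$ is abelian (if $H_*^{\circ}$ is abelian then $[L,L] \subseteq H_*^{\circ}$ forces $[L,L]$ abelian, but a semisimple group with abelian derived subgroup is trivial, whence $L$ is a torus; conversely if $L$ is a torus then $H_*^{\circ} \subseteq L$ is abelian). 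Finally $L$ abelian and $T \subseteq L$ with $L = Z_G(\mathfrak{a})$ reductive connected forces $L = T$, since a connected abelian reductive group containing the maximal torus $T$ of $G$ must equal $T$. Chaining these equivalences with Proposition \ref{prop: regular element}(i)$\Leftrightarrow$(iii) gives the result.

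The main obstacle I anticipate is justifying the identification $L = Z_G(\mathfrak{a})$ cleanly, i.e. that the Levi from the local structure theorem is the full centralizer of the Cartan subtorus $\widetilde{A}_{G/H}$ rather than just containing it after shrinking. One must be careful that the local structure theorem only guarantees $[L,L] \subseteq L_0 \subseteq L$ and that $L/L_0$ has the right character lattice; pinning down $\mathfrak{l} = \laz_{\mathfrak{g}}(\mathfrak{a})$ exactly may require invoking more precise statements from \cite{KnopWeyl} or \cite{TimashevBook} about how $L$ is defined (it is the centralizer of the image of the "central" torus acting on the slice $Z$). If a direct identification is awkward, an alternative is to bypass it: show directly that $H_*^{\circ}$ abelian implies $[L,L]$ trivial, hence $\mathfrak{l} \subseteq \mathfrak{t}$ is abelian, and then that $\mathfrak{a} \subseteq \mathfrak{t}$ being "large" (its centralizer no bigger than $\mathfrak{l}$, hence equal to $\mathfrak{t}$) is equivalent to $\mathfrak{a}$ meeting $\mathfrak{g}_{\reg}$ — but this still needs the link between $\laz_{\mathfrak{g}}(\mathfrak{a})$ and $\mathfrak{l}$, so the careful bookkeeping around the local structure theorem is unavoidable.
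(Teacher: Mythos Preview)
Your approach is essentially identical to the paper's: reduce via Proposition~\ref{prop: regular element} and $H_*=L_0$ to showing $Z_G(\mathfrak{a})=T\iff L_0^{\circ}$ abelian, then argue structurally using $[L,L]\subseteq L_0\subseteq L$ with $L$ connected reductive and $T\subseteq L$. The obstacle you correctly anticipate---proving $L=Z_G(\mathfrak{a})$---is dispatched in the paper by citing \cite[Definition~8.13 and Proposition~8.14]{TimashevBook}, which give this identification precisely because $G/H$ is affine (hence non-degenerate in Timashev's sense); once you have that reference the rest of your argument goes through verbatim.
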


\begin{proof}
Proposition \ref{prop: regular element} and the fact that $H_{*}=L_0$ reduce our task to one of proving that $Z_G(\laa)=T$ if and only if the identity component in $L_0$ is abelian. To this end, consider \cite[Definition 8.13]{TimashevBook} and \cite[Proposition 8.14]{TimashevBook}. Since $G/H$ is an affine variety, these two statements imply that $L=Z_G(\laa)$. Our task is therefore to prove that $L=T$ if and only if the identity component in $L_0$ is abelian. The forward implication follows immediately from the inclusion $L_0\subseteq L$, so that we only need to verify the opposite implication.

Note that $L$ is a Levi factor of a parabolic subgroup of $G$, as discussed in \ref{subsection: image of moment map}. This means that $L$ is connected and reductive, forcing the derived subgroup $[L,L]$ to be connected as well. The inclusion $[L,L]\subseteq L_0$ thus shows $[L,L]$ to be contained in the identity component in $L_0$. If we now assume that this component is abelian, then $[L,L]$ must also be abelian. It follows that $L$ is itself abelian. Together with the inclusion $T\subseteq L$ (see \ref{subsection: image of moment map}) and the fact that $L$ is a connected, reductive subgroup of $G$, this last sentence implies that $L=T$. The proof is complete.     
\end{proof}

Corollary \ref{cor: gen stab} can be used to easily assess $\mathfrak{a}$-regularity in several examples. To see this, we note that \cite{MR0376965} fully describes the $H$-representation $\mathfrak{h}^{\perp}$ in many cases. Each of these descriptions can be combined with the tables of \`Ela\v{s}vili \cite{MR0304554,MR0304555} to compute $H_{*}$, after which Corollary \ref{cor: gen stab} can be applied. We illustrate this in the following example. 

\begin{example}
Consider the pair $(G,H)=(\SL_{p+q},\mathrm{S}(\GL_{p}\times\GL_{q}))$ with $1\le p\le q$. The vector space $\lah^{\perp}$ is isomorphic to $\left(\bbC^{p}\otimes(\bbC^{q})^{*}\right)\oplus\left((\bbC^{p})^{*}\otimes\bbC^{q}\right)$ as an $H$-representation. The Lie algebra of the generic stabilizer for this action is isomorphic to $\bbC^{p}\oplus\lasl(q-p)$ if $p<q$ and to $\bbC^{p-1}$ if $p=q$. Hence $(G,H)$ is $\laa$-regular if and only if $q-p\le1$. 
\end{example}

We now formulate a numerical criterion for $\mathfrak{a}$-regularity in terms of spherical-geometric invariants.  
Recall that the \textit{rank} $\mathrm{rk}_{G}(Y)$ of a $G$-variety $Y$ is the dimension of $\laa_{Y}$. The \textit{complexity} $c_{G}(Y)$ of $Y$ is the codimension of a generic $B$-orbit in $Y$. We then have the following equalities, which are due to Knop \cite{KnopWeyl}:
\begin{eqnarray}\label{formula invariants}
2c_{G}(G/H)+\mathrm{rk}_{G}(G/H)&=&\dim G-2\dim H+\dim H_{*};\label{formula invariants1}\\
\mathrm{rk}_{G}(G/H)&=&\dim T-\dim T_{*},\label{formula invariants2}
\end{eqnarray}
where $T_{*}$ is a maximal torus of $H_{*}$.

\begin{corollary}\label{cor: num crit}
The pair $(G,H)$ is $\laa$-regular if and only if $c_{G}(G/H)+\mathrm{rk}_{G}(G/H)+\dim H=\dim B$.
\end{corollary}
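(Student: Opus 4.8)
The plan is to derive the numerical criterion directly from Corollary~\ref{cor: gen stab} by translating ``the identity component of $H_*$ is abelian'' into the stated dimension equality, using Knop's formulas \eqref{formula invariants1} and \eqref{formula invariants2}. First I would observe that since $H_*$ is reductive, the identity component $(H_*)^{\circ}$ is abelian if and only if it equals its own maximal torus $T_*$, i.e. if and only if $\dim H_* = \dim T_*$. (Here one uses that a connected reductive group is a torus precisely when it has no semisimple part, which is exactly the condition $\dim H_* = \dim T_*$.)

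Next I would combine the two Knop formulas. Adding \eqref{formula invariants1} and \eqref{formula invariants2} and rearranging gives
\begin{equation*}
2c_G(G/H) + 2\,\mathrm{rk}_G(G/H) = \dim G - 2\dim H + \dim H_* + \dim T - \dim T_*.
\end{equation*}
Now I would impose the $\laa$-regularity condition in the form $\dim H_* = \dim T_*$; the two terms $\dim H_*$ and $-\dim T_*$ cancel, leaving
\begin{equation*}
2c_G(G/H) + 2\,\mathrm{rk}_G(G/H) = \dim G - 2\dim H + \dim T,
\end{equation*}
hence after dividing by $2$ and using $\dim B = \tfrac{1}{2}(\dim G + \dim T)$,
\begin{equation*}
c_G(G/H) + \mathrm{rk}_G(G/H) = \tfrac{1}{2}\dim G + \tfrac{1}{2}\dim T - \dim H = \dim B - \dim H,
\end{equation*}
which is the asserted equation. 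This proves the forward implication.

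For the converse I would run the same computation in reverse: starting from $c_G(G/H) + \mathrm{rk}_G(G/H) + \dim H = \dim B$ and using \eqref{formula invariants1}, \eqref{formula invariants2}, and $\dim B = \tfrac12(\dim G + \dim T)$, one is forced to conclude $\dim H_* = \dim T_*$, whence $(H_*)^{\circ}$ is a torus and in particular abelian; Corollary~\ref{cor: gen stab} then yields $\laa$-regularity. The main obstacle is not any deep argument but rather bookkeeping: one must be careful that $\dim B = \tfrac12(\dim G + \dim T)$ (equivalently $\dim G - \dim T = 2|\Delta_+|$ and $\dim B = \dim T + |\Delta_+|$), and one must correctly justify the elementary fact that a connected reductive group with $\dim H_* = \dim T_*$ is a torus — this is where the reductivity of $H_*$ (noted after Remark~\ref{Remark: Generic stabilizer}) is essential, since without it $\dim H_* = \dim T_*$ would not force solvability, let alone commutativity, of the identity component.
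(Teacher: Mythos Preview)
Your proposal is correct and follows essentially the same approach as the paper's proof: reduce via Corollary~\ref{cor: gen stab} to the condition $\dim H_*=\dim T_*$, then combine Knop's formulas \eqref{formula invariants1} and \eqref{formula invariants2} with $\dim B=\tfrac12(\dim G+\dim T)$. The paper simply states this in one sentence, whereas you have written out the algebraic manipulation and justified the reductivity step explicitly.
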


\begin{proof}
Corollary \ref{cor: gen stab} shows that $(G,H)$ is $\laa$-regular if and only if the identity component in $H_{*}$ is abelian. This is in turn equivalent to $\dim H_{*}=\dim T_{*}$, and the result then follows from \eqref{formula invariants} and \eqref{formula invariants2}.
\end{proof}

The criteria established in Corollaries \ref{cor: gen stab} and \ref{cor: num crit} become effective once we are able to either determine the Cartan space $\laa(\lag,\lah)$ or the generic stabilizer $H_{*}$. The latter is difficult to accomplish in full generality, but Losev's work \cite{MR2362821} makes the former achievable in a systematic way. Losev's method features prominently in the next subsection.  

\subsection{The Cartan space of a homogeneous affine variety}\label{Subsection: The Cartan space of a homogeneous affine space} Continuing with the notation used in \ref{subsection: The cotangent bundle of a homogeneous space}, we recall Losev's algorithm \cite{MR2362821} for determining the Cartan space of $(G,H)$. We begin with the following definition (cf. \cite[Section 10]{TimashevBook}).

\begin{definition}\label{Definition: decomposable}
The pair $(G,H)$ or the corresponding pair $(\lag,\lah)$ is called: 
\begin{itemize}
\item[(i)] \textit{decomposable} if there exist non-zero proper ideals $\lag_{1},\lag_{2}$ in $\lag$ and any ideals $\lah_{1},\lah_{2}$ in $\lah$ such that $\lag=\lag_{1}\oplus\lag_{2}$, $\lah=\lah_{1}\oplus\lah_{2}$, $\lah_{1}\subseteq\lag_{1}$, and $\lah_{2}\subseteq\lag_{2}$;
\item[(ii)] \textit{indecomposable} if it is not decomposable;
\item[(iii)] \textit{strictly indecomposable} if $(\mathfrak{g},[\mathfrak{h},\mathfrak{h}])$ is indecomposable.
\end{itemize}
\end{definition}

We note that the Cartan space of a decomposable pair $(\lag_{1}\oplus\lag_{2},\lah_{1}\oplus\lah_{2})$ is $\laa(\lag_{1},\lah_{1})\oplus \laa(\lag_{2},\lah_{2})$. At the same time, observe that $(x_1,x_2)\in\lag_1\oplus\lag_2$ is a regular element if and only if $x_1$ and $x_2$ are regular elements of $\lag_1$ and $\lag_2$, respectively. These last two sentences imply that $(\lag_{1}\oplus\lag_{2},\lah_{1}\oplus\lah_{2})$ is $\laa$-regular if and only if $(\lag_1,\lah_1)$ and $(\lag_2,\lah_2)$ are $\laa$-regular. Recognizing its relevance to later arguments, we record this conclusion as follows.

\begin{lemma}\label{lem: indecomposable}
Consider a collection of indecomposable pairs $(\mathfrak{g}_i,\mathfrak{h}_i)$, $i=1,\ldots,n$, and suppose that our pair $(\lag,\lah)$ is given by 
\begin{equation}\label{Equation: Decomposition into pairs}(\mathfrak{g},\mathfrak{h})=\bigg(\bigoplus_{i=1}^n\mathfrak{g}_i,\bigoplus_{i=1}^n\mathfrak{h}_i\bigg).\end{equation}
Then $(\mathfrak{g},\mathfrak{h})$ is $\mathfrak{a}$-regular if and only if $(\mathfrak{g}_i,\mathfrak{h}_i)$ is $\mathfrak{a}$-regular for all $i=1,\ldots,n$.
\end{lemma}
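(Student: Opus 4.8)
The plan is to reduce to the binary case and then iterate, since once we know the result for a decomposition $(\lag,\lah) = (\lag_1 \oplus \lag_2, \lah_1 \oplus \lah_2)$ with each factor a nonzero ideal, a straightforward induction on $n$ handles the general statement. So the first step is to observe that the pair in \eqref{Equation: Decomposition into pairs} is either itself indecomposable — in which case there is nothing to prove — or it admits a splitting $(\lag,\lah) = (\lag' \oplus \lag'', \lah' \oplus \lah'')$ into two pairs, each of which is a subsum $\bigoplus_{i \in I} (\lag_i, \lah_i)$ over a partition $\{1,\ldots,n\} = I \sqcup I^c$ into nonempty parts. This follows from the fact that indecomposability of each $(\lag_i,\lah_i)$ means the summands cannot be further refined, so any nontrivial ideal splitting of $\lag$ compatible with $\lah$ must group the given summands. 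Actually, the cleanest route is to skip the structural fuss: I would simply prove the binary statement ``$(\lag_1 \oplus \lag_2, \lah_1 \oplus \lah_2)$ is $\laa$-regular $\iff$ both $(\lag_i,\lah_i)$ are $\laa$-regular'' and then induct, peeling off $(\lag_n, \lah_n)$ at each stage, using associativity of the direct sum.

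The binary statement is exactly what the paragraph preceding the lemma establishes, so the second step is to invoke that discussion verbatim. Concretely: the Cartan space is additive over a decomposition, $\laa(\lag_1 \oplus \lag_2, \lah_1 \oplus \lah_2) = \laa(\lag_1,\lah_1) \oplus \laa(\lag_2,\lah_2)$ (a consequence of $\bbC[G_1/H_1 \times G_2/H_2] \cong \bbC[G_1/H_1] \otimes \bbC[G_2/H_2]$ and the corresponding factorization of $B$-semi-invariants, with $B = B_1 \times B_2$, $T = T_1 \times T_2$); and an element $(x_1,x_2) \in \lag_1 \oplus \lag_2$ is regular in $\lag_1 \oplus \lag_2$ precisely when each $x_i$ is regular in $\lag_i$, since $\dim \ker(\adj_{(x_1,x_2)}) = \dim\ker(\adj_{x_1}) + \dim\ker(\adj_{x_2})$ and $\mathrm{rk}(\lag_1 \oplus \lag_2) = \mathrm{rk}(\lag_1) + \mathrm{rk}(\lag_2)$. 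Combining these, $\laa(\lag_1 \oplus \lag_2, \lah_1 \oplus \lah_2)$ meets the regular locus iff there exist $x_i \in \laa(\lag_i,\lah_i) \cap (\lag_i)_{\mathrm{reg}}$ for $i=1,2$, which is the claim.

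The third step is the induction itself: assuming the result for $n-1$ indecomposable summands, write $(\lag,\lah) = \big(\bigoplus_{i=1}^{n-1}\lag_i \oplus \lag_n,\ \bigoplus_{i=1}^{n-1}\lah_i \oplus \lah_n\big)$, apply the binary case to conclude $\laa$-regularity of $(\lag,\lah)$ is equivalent to $\laa$-regularity of both $\big(\bigoplus_{i=1}^{n-1}\lag_i, \bigoplus_{i=1}^{n-1}\lah_i\big)$ and $(\lag_n,\lah_n)$, and then apply the inductive hypothesis to the first factor. I do not expect any genuine obstacle here; the one point requiring a little care is making sure the direct-sum decomposition in the lemma is of the type covered by Definition \ref{Definition: decomposable} (nonzero proper ideals $\lag_1,\lag_2$, arbitrary ideals $\lah_1,\lah_2$ with $\lah_i \subseteq \lag_i$), so that the additivity of the Cartan space and the regularity criterion genuinely apply — but since each $\lag_i$ is a nonzero ideal and each $\lah_i \subseteq \lag_i$ is an ideal of $\lah$, the partition-into-two-blocks decomposition satisfies this by construction. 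The remaining verifications (additivity of $\laa$, the regularity characterization over a product) are exactly the two sentences the paper inserts before stating the lemma, so the proof is essentially a citation of that paragraph plus a one-line induction.
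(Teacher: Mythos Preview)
Your proposal is correct and matches the paper's approach exactly: the paper does not even give a separate proof of the lemma, instead deriving the binary case in the paragraph immediately preceding it (additivity of the Cartan space and of the regularity condition) and then ``recording'' the general statement, which is precisely your citation-plus-induction. Your observation that the indecomposability hypothesis plays no real role in the argument is also accurate.
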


\begin{remark}\label{Remark: Decomposition remark}
Note that our pair $(\lag,\lah)$ is necessarily expressible in the form \eqref{Equation: Decomposition into pairs}, i.e. there exist indecomposable pairs $(\lag_i,\lah_i)$, $i\in\{1,\ldots,n\}$, such that $\lag_i$ (resp. $\lah_i$) is an ideal in $\lag$ (resp. $\lah$) for all $i$ and \eqref{Equation: Decomposition into pairs} holds. This observation follows from Definition \ref{Definition: decomposable} via a straightforward induction argument, and it will be used implicitly in some of our arguments.
\end{remark}

We now resume the main discussion. Note that for a subalgebra $\laj\subseteq\lah$, we have an inclusion $\laa(\lag,\lah)\subseteq\laa(\lag,\laj)$ of Cartan spaces. It follows that $\laa(\lag,\lah)\subseteq\laa(\lag,\laj)$ for all ideals $\laj\le\lah$, which leads to the following definition (cf. \cite[Definition 1.1]{MR2362821}).

\begin{definition}
A reductive subalgebra $\laj\subseteq\lag$ is called \textit{essential} if for every proper ideal $\lai\le\laj$, the inclusion $\laa(\lag,\laj)\subseteq\laa(\lag,\lai)$ is strict. 
\end{definition}

Now consider the Lie algebra $\mathfrak{h}_{*}$ of $H_{*}$, where $H_{*}$ is the generic stabilizer for the $H$-action on $\mathfrak{h}^{\perp}$ (see \ref{subsection: The cotangent bundle of a homogeneous space}). Losev shows that $\lah_{*}$ generates an ideal $\lah_{\ess}\le\lah$ that is an essential subalgebra of $\lag$. This essential subalgebra is reductive and has the following properties: 
\begin{itemize}
\item $\lah_{\ess}\le\lah$ is the unique ideal of $\mathfrak{h}$ for which $\laa(\lag,\lah)=\laa(\lag,\lah_{\ess})$;
\item $\lah_{\ess}$ is maximal (for inclusion) among the ideals of $\lah$ that are essential subalgebras of $\lag$.
\end{itemize}
In principle, this reduces the computation of $\laa(\lag,\lah)$ to the task of determining $\lah_{\ess}$ and $\laa(\lag,\lah_{\ess})$.

The preceding discussion allows us to sketch the main results of \cite{MR2362821}. Losev classifies the essential subalgebras $\mathfrak{j}\subseteq\mathfrak{g}$ that are semisimple, and in each such case he presents $\mathfrak{a}(\mathfrak{g},\mathfrak{j})$ as the span of certain linear combinations of fundamental weights. This information may also be used to determine the Cartan space when $\mathfrak{j}$ is non-semisimple, provided that one knows the center of $\mathfrak{j}$. To this end, Losev gives an algorithm for calculating the centers of non-semisimple essential subalgebras.

\subsection{Preliminaries for the classifications}\label{Subsection: preliminaries}
We now discuss four items that are crucial to the classifications in \ref{Subsection: The classification}. Our first item is the following elementary observation. 

\begin{observation}
Let $\mathfrak{r}$ be a complex reductive Lie algebra with a reductive ideal $\lai\le\mathfrak{r}$. If $\laj$ is a reductive ideal in $\lai$, then $\laj$ is also an ideal in $\mathfrak{r}$. This follows immediately from the decomposition of a reductive Lie algebra into a direct sum of its center and simple ideals, and it will be used implicitly in some of what follows.
\end{observation}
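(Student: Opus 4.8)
The plan is to reduce the statement to the canonical decomposition of a complex reductive Lie algebra into its center and its simple ideals. Write $\mathfrak{r}=\mathfrak{z}(\mathfrak{r})\oplus\mathfrak{r}_{1}\oplus\cdots\oplus\mathfrak{r}_{k}$, where $\mathfrak{z}(\mathfrak{r})$ is the center and the $\mathfrak{r}_{i}$ are the simple ideals of $\mathfrak{r}$. The crux is the elementary description: \emph{every ideal $\lan\le\mathfrak{r}$ satisfies $\lan=(\lan\cap\mathfrak{z}(\mathfrak{r}))\oplus\bigoplus_{i\in I}\mathfrak{r}_{i}$, where $I=\{i:\mathfrak{r}_{i}\subseteq\lan\}$.} To prove this, observe first that for each $i$ the subspace $\lan\cap\mathfrak{r}_{i}$ is an ideal of the simple algebra $\mathfrak{r}_{i}$, since $[\mathfrak{r}_{i},\lan\cap\mathfrak{r}_{i}]\subseteq[\mathfrak{r}_{i},\lan]\cap\mathfrak{r}_{i}\subseteq\lan\cap\mathfrak{r}_{i}$; hence $\lan\cap\mathfrak{r}_{i}$ is $0$ or $\mathfrak{r}_{i}$. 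Writing an arbitrary $x\in\lan$ as $x=z+x_{1}+\cdots+x_{k}$ along the decomposition of $\mathfrak{r}$, one has $[\mathfrak{r}_{i},x_{i}]=[\mathfrak{r}_{i},x]\subseteq\lan\cap\mathfrak{r}_{i}$, so when $\lan\cap\mathfrak{r}_{i}=0$ the element $x_{i}$ is central in $\mathfrak{r}_{i}$, hence zero. In either case $x_{i}\in\lan$, and therefore $z=x-x_{1}-\cdots-x_{k}\in\lan$ as well; the asserted decomposition of $\lan$ follows, the sum being direct because it is a sub-sum of the decomposition of $\mathfrak{r}$. Exactly the same argument applies with $\mathfrak{r}$ replaced by any complex reductive Lie algebra.

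I would then apply this description twice. Applied to the ideal $\lai\le\mathfrak{r}$, it gives $\lai=\lac\oplus\bigoplus_{i\in M}\mathfrak{r}_{i}$ with $\lac:=\lai\cap\mathfrak{z}(\mathfrak{r})$ and $M\subseteq\{1,\dots,k\}$. Since $\lac$ is central in $\lai$ while $\bigoplus_{i\in M}\mathfrak{r}_{i}$ is semisimple with trivial center, this is the canonical decomposition of the reductive Lie algebra $\lai$: its center is $\lac$ and its simple ideals are exactly the $\mathfrak{r}_{i}$ with $i\in M$ (each such $\mathfrak{r}_{i}$ is indeed an ideal of $\lai$, because $[\lai,\mathfrak{r}_{i}]\subseteq[\mathfrak{r},\mathfrak{r}_{i}]=\mathfrak{r}_{i}$). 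Applying the same description to the ideal $\laj\le\lai$, now inside the reductive Lie algebra $\lai$, yields $\laj=\lad\oplus\bigoplus_{i\in N}\mathfrak{r}_{i}$ for some subspace $\lad\subseteq\mathfrak{z}(\lai)=\lac\subseteq\mathfrak{z}(\mathfrak{r})$ and some subset $N\subseteq M$.

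It then remains only to verify directly that $\laj$ is an ideal of $\mathfrak{r}$. Since $\lad\subseteq\mathfrak{z}(\mathfrak{r})$ we have $[\mathfrak{r},\lad]=0\subseteq\laj$, and for each $i\in N$ the summand $\mathfrak{r}_{i}$ is one of the canonical simple ideals of $\mathfrak{r}$, so $[\mathfrak{r},\mathfrak{r}_{i}]=\mathfrak{r}_{i}\subseteq\laj$; hence $[\mathfrak{r},\laj]\subseteq\laj$. There is no genuine obstacle in this argument. The only point requiring care is the bookkeeping in the second paragraph: one must note that the central part and the simple summands produced when decomposing $\lai$ are not merely internal data of $\lai$ but are literally the corresponding pieces of the fixed decomposition of $\mathfrak{r}$, which is exactly what makes the final verification immediate.
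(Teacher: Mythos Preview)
Your proof is correct and follows precisely the route the paper indicates: the observation is stated in the paper with the one-line justification that it ``follows immediately from the decomposition of a reductive Lie algebra into a direct sum of its center and simple ideals,'' and your argument is a careful unpacking of exactly that. In fact your argument shows slightly more than stated, since the reductivity hypotheses on $\lai$ and $\laj$ are automatic once one knows that every ideal of a complex reductive Lie algebra has the form you describe.
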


We also need the following definition, which serves to formalize a standard idea.

\begin{definition}\label{Definition: Conjugate}
Let $\mathfrak{r}_1$ and $\mathfrak{r}_2$ be complex Lie algebras with respective subalgebras $\mathfrak{s}_1$ and $\mathfrak{s}_2$. We refer to $(\mathfrak{r}_1,\mathfrak{s}_1)$ and $(\mathfrak{r}_2,\mathfrak{s}_2)$ as being \textit{conjugate} if $\mathfrak{r}_1=\mathfrak{r}_2$ and $\mathfrak{s}_1=\phi(\mathfrak{s}_2)$ for some Lie algebra automorphism $\phi:\mathfrak{r}_1\rightarrow\mathfrak{r}_1$. 
\end{definition}

With this in mind, we have the following lemma. 

\begin{lemma}\label{lemma: classification}
Assume that $\mathfrak{g}$ is simple and let $\mathfrak{h}\subseteq\lag$ be a reductive subalgebra.

\begin{itemize}
\item[(i)] If $(\lag,\lai)$ is not conjugate to a pair in Tables 1 or 2 from \cite{MR2362821} for any ideal $\lai\le\lah$, then $\lah_{\ess}=\{0\}$. In this case, $\laa(\lag,\lah)=\mathfrak{t}$ and $(\lag,\lah)$ is $\laa$-regular. 

\item[(ii)] If $\lah_{\ess}\ne\{0\}$, then $(\lag,\lah)$ is $\laa$-regular if and only if $(\lag,[\lah_{\ess},\lah_{\ess}])$ is conjugate to a pair in Table \ref{table: h_ess nontrivial} below. 
\end{itemize}
\end{lemma}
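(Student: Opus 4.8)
The plan is to leverage Losev's structure theory recalled in \ref{Subsection: The Cartan space of a homogeneous affine space}, together with the characterizations of $\laa$-regularity from Corollary \ref{cor: gen stab} and Proposition \ref{prop: regular element}, and reduce everything to a finite check against Losev's Tables 1 and 2. For part (i), the hypothesis is that no pair $(\lag,\lai)$ with $\lai\le\lah$ an ideal appears (up to conjugacy) in Losev's tables of semisimple essential subalgebras. Since $\lah_{\ess}$ is by construction an ideal of $\lah$ that is an essential subalgebra of $\lag$, and since $[\lah_{\ess},\lah_{\ess}]$ is a semisimple ideal of $\lah_{\ess}$ — hence, by the Observation, an ideal of $\lah$ — the hypothesis forces $(\lag,[\lah_{\ess},\lah_{\ess}])$ not to appear in Losev's classification of semisimple essential subalgebras. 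But Losev's tables are exhaustive for semisimple essential subalgebras of a simple $\lag$, and $[\lah_{\ess},\lah_{\ess}]$ must itself be essential (this follows from the definition of essential, since $\lah_{\ess}$ is essential and one can pass to the semisimple part — I would quote the relevant statement from \cite{MR2362821}). The only way to reconcile this is $[\lah_{\ess},\lah_{\ess}]=\{0\}$, i.e.\ $\lah_{\ess}$ is a central torus; but $\lah_{\ess}$ is generated as an ideal by $\lah_{*}=\Li(H_{*})$, and an essential \emph{toral} subalgebra of a simple $\lag$ must be $\{0\}$ (a nonzero central ideal would fail essentiality against its zero ideal, or more directly: the only toral essential subalgebra is trivial — again citable from Losev). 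Hence $\lah_{\ess}=\{0\}$, so $\laa(\lag,\lah)=\laa(\lag,\lah_{\ess})=\laa(\lag,\{0\})=\laa_{G/G}$... more precisely $\laa(\lag,\{0\})=\laa_{G}=\mathfrak{t}$ since $G$ acting on itself by left translation has $T^*G\to\lag$ surjective, matching Example \ref{Example: Cartan space of G/T} in spirit. Then $\mathfrak{t}$ certainly contains regular elements, so $(\lag,\lah)$ is $\laa$-regular.

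For part (ii), assume $\lah_{\ess}\ne\{0\}$. By Corollary \ref{cor: gen stab}, $\laa$-regularity of $(\lag,\lah)$ is equivalent to the identity component of $H_{*}$ being abelian, i.e.\ to $[\lah_{*},\lah_{*}]=\{0\}$. The key point is that $\lah_{*}$ is semisimple-part-equivalent to $[\lah_{\ess},\lah_{\ess}]$: indeed $\lah_{\ess}$ is the ideal of $\lah$ generated by $\lah_{*}$ and is reductive, so $[\lah_{\ess},\lah_{\ess}]$ is the ideal generated by $[\lah_{*},\lah_{*}]$, and conversely these two semisimple algebras determine each other. Consequently $[\lah_{*},\lah_{*}]=\{0\}$ if and only if $[\lah_{\ess},\lah_{\ess}]=\{0\}$. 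Now $[\lah_{\ess},\lah_{\ess}]$ is a semisimple essential subalgebra of the simple $\lag$, so $(\lag,[\lah_{\ess},\lah_{\ess}])$ — when $[\lah_{\ess},\lah_{\ess}]\ne\{0\}$ — is conjugate to one of the pairs in Losev's Tables 1 or 2; for each such pair one can read off $\laa(\lag,[\lah_{\ess},\lah_{\ess}])=\laa(\lag,\lah_{\ess})=\laa(\lag,\lah)$ and check directly whether it contains a regular element. Table \ref{table: h_ess nontrivial} is then \emph{defined} to be the sublist of those pairs in Losev's tables for which the associated Cartan space does contain a regular element (equivalently, for which $\laz_{\lag}(\laa)=\mathfrak{t}$, by Proposition \ref{prop: regular element}(iii)). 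With that definition, $(\lag,\lah)$ is $\laa$-regular $\iff$ $\laa(\lag,\lah)$ contains a regular element $\iff$ $(\lag,[\lah_{\ess},\lah_{\ess}])$ is conjugate to an entry of Table \ref{table: h_ess nontrivial}. The only subtlety is the case $[\lah_{\ess},\lah_{\ess}]=\{0\}$ with $\lah_{\ess}\ne\{0\}$, i.e.\ $\lah_{\ess}$ a nonzero central torus in $\lag$ — impossible since $\lag$ is simple and has trivial center, so this degenerate case does not arise and $\lah_{\ess}\ne\{0\}$ genuinely forces $[\lah_{\ess},\lah_{\ess}]\ne\{0\}$; in particular the ``if and only if'' in (ii) is not vacuously broken.

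The main obstacle I anticipate is \textbf{not} the logical skeleton above but rather the bookkeeping needed to justify passing between $\lah_{*}$, $\lah_{\ess}$, and $[\lah_{\ess},\lah_{\ess}]$ cleanly — specifically, verifying that ``essential'' is inherited by the semisimple part (so that $[\lah_{\ess},\lah_{\ess}]$ really is one of Losev's tabulated algebras, with the \emph{same} Cartan space), and confirming that Losev's Tables 1 and 2 are stated in a form that lets one extract $\laa(\lag,\mathfrak{j})$ and test regularity entry-by-entry. Both are addressed in \cite{MR2362821}, so the proof is essentially a careful citation-and-assembly argument; the computational content lives entirely in constructing Table \ref{table: h_ess nontrivial} by running the regularity test on Losev's finite list.
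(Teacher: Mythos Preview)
Your argument for part (ii) has a genuine gap. You assert that $\laa(\lag,[\lah_{\ess},\lah_{\ess}])=\laa(\lag,\lah_{\ess})=\laa(\lag,\lah)$, and you flag this as the ``main obstacle'' to be handled by citation. But the first equality is \emph{false} whenever $\lah_{\ess}$ is non-semisimple: since $\lah_{\ess}$ is essential and $[\lah_{\ess},\lah_{\ess}]$ is then a proper ideal, the very definition of essential forces $\laa(\lag,\lah_{\ess})\subsetneq\laa(\lag,[\lah_{\ess},\lah_{\ess}])$. So checking regularity of the larger space $\laa(\lag,[\lah_{\ess},\lah_{\ess}])$ does not, by itself, establish regularity of $\laa(\lag,\lah)$. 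The paper deals with this by invoking Losev's \cite[Theorem 1.3(c)]{MR2362821}: the centre $\laz(\lah_{\ess})$ is contained in $\laz:=\laz(\laz_{\lag}([\lah_{\ess},\lah_{\ess}]))$, so $\lah_{\ess}\subseteq[\lah_{\ess},\lah_{\ess}]+\laz$, and one then verifies (from Losev's Table~2) that $\laa(\lag,[\lah_{\ess},\lah_{\ess}]+\laz)$ already contains regular elements for items~6 and~7 of Table~\ref{table: h_ess nontrivial}. This sandwiching is what makes the forward implication go through in the non-semisimple case; your proposal lacks any analogue.

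Two smaller issues. First, your claim that $\lah_{\ess}\ne\{0\}$ with $[\lah_{\ess},\lah_{\ess}]=\{0\}$ would make $\lah_{\ess}$ ``a nonzero central torus in $\lag$'' conflates abelian with central; an abelian $\lah_{\ess}$ is merely a torus in $\lag$, not a central one. The paper rules out this case differently: it conjugates the torus into $\lat$ and uses the sandwich $\laa(\lag,\lat)\subseteq\laa(\lag,\lah_{\ess})\subseteq\laa(\lag,\{0\})$ together with Example~\ref{Example: Cartan space of G/T} to force $\laa(\lag,\lah_{\ess})=\laa(\lag,\{0\})$, whence $\lah_{\ess}=\{0\}$ by the uniqueness property of $\lah_{\ess}$. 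Second, your attempt to route through Corollary~\ref{cor: gen stab} and argue that $[\lah_*,\lah_*]=\{0\}\iff[\lah_{\ess},\lah_{\ess}]=\{0\}$ would, if taken literally together with the previous point, yield that $(\lag,\lah)$ is \emph{never} $\laa$-regular when $\lah_{\ess}\ne\{0\}$ --- contradicting the lemma. That detour should be dropped; the paper works directly with Cartan spaces and the three-case split (a)/(b)/(c) according to which of Losev's tables $(\lag,[\lah_{\ess},\lah_{\ess}])$ appears in.
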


\begin{table}[h!]
\begin{center}
$$ \begin{array}{c|c|c}
& \lag & \lai \\
\hline
1& \lasl_{2k} & \lasl_{k}\oplus\lasl_{k}  \\
2& \lasl_{2k-1} & \lasl_{k}\oplus\lasl_{k-1} \\
3& \lasp_{8}& \lasp_{4}\oplus\lasl_{2}\oplus\lasl_{2}\\
4& \lasp_{6}& \lasl_{2}\oplus\lasl_{2}\oplus\lasl_{2}\\
5&\lae_{6}&\lasl_{6}\\
6& \lasl_{2n+1} & \lasl_{n+1} \\
7& \lasl_{2n+1} & \lasp_{2n} \\
\end{array}$$
  \vspace{0.2cm}
\caption{For each line, the embedding $\lai\subseteq\lag$ is as described in \cite[Section 6]{MR2362821}.}
\label{table: h_ess nontrivial}
\end{center}
\end{table}

\begin{proof}
We begin by proving (i), and thus assume that $(\lag,\lai)$ is not conjugate to a pair in Tables 1 or 2 from \cite{MR2362821} for any ideal $\lai\leq\lah$. Noting the particular classification that each table gives, we conclude that $[\lah_{\ess},\lah_{\ess}]$ cannot contain a non-zero semisimple ideal. Hence $[\lah_{\ess},\lah_{\ess}]=\{0\}$, i.e. $\lah_{\ess}$ is abelian.
Since $\lah_{\ess}$ is also reductive, one can find a Lie algebra automorphism of $\lag$ that sends $\lah_{\ess}$ into $\lat$. This implies that $(\lag,\lah)$ is conjugate to a pair $(\lag,\widetilde{\lah})$ satisfying $\widetilde{\lah}_{\ess}\subseteq\lat$. We may therefore assume that $\lah_{\ess}\subseteq\lat$.
 
Note that the inclusions $\{0\}\subseteq\lah_{\ess}\subseteq\lat$ yield $\laa(\lag,\lat)\subseteq\laa(\lag,\lah_{\ess})\subseteq\laa(\lag,\{0\})$, which by Example \ref{Example: Cartan space of G/T} amounts to the statement $\lat\subseteq\laa(\lag,\lah_{\ess})\subseteq\laa(\lag,\{0\})$. At the same time, the inclusion $\laa(\lag,\{0\})\subseteq\lat$ follows from how we defined Cartan spaces in \ref{subsection: image of moment map}. We conclude that $$\lat=\laa(\lag,\lah_{\ess})=\laa(\lag,\{0\}).$$ Recalling the properties of $\lah_{\ess}$ discussed in \ref{Subsection: The Cartan space of a homogeneous affine space}, the first equality implies that $\lat=\laa(\lag,\lah)$ and the second equality gives $\lah_{\ess}=\{0\}$. The $\laa$-regularity of $(\lag,\lah)$ now follows from the fact that $\lat\cap\lag_{\text{reg}}\neq\emptyset$, completing our proof of (i).

To prove (ii), we first assume that $(\lag,[\lah_{\ess},\lah_{\ess}])$ is conjugate to a pair in Table \ref{table: h_ess nontrivial}. If $\lah_{\ess}$ is semisimple, i.e. $[\lah_{\ess},\lah_{\ess}]=\lah_{\ess}$, then $\laa(\lag,[\lah_{\ess},\lah_{\ess}])=\laa(\lag,\lah_{\ess})=\laa(\lag,\lah)$. This observation and an inspection of \cite[Table 1]{MR2362821} reveal that $(\lag,\lah)$ is $\laa$-regular. If $\lah_{\ess}$ is not semisimple, then $(\lag,[\lah_{\ess},\lah_{\ess}])$ is conjugate to one of items 6 and 7 in Table \ref{table: h_ess nontrivial} and $\lah_{\ess}$ has a non-trivial center $\laz(\lah_{\ess})$. The result \cite[Theorem 1.3(c)]{MR2362821} then shows that $$\laz(\lah_{\ess})\subseteq\laz:=\laz(\laz_{\lag}([\lah_{\ess},\lah_{\ess}])),$$ where $\laz_{\lag}([\lah_{\ess},\lah_{\ess}])$ is the subalgebra of all elements in $\lag$ that commute with every element of $[\lah_{\ess},\lah_{\ess}]$ and $\laz(\laz_{\lag}([\lah_{\ess},\lah_{\ess}]))$ is the center of this subalgebra. Noting again that $(\lag,[\lah_{\ess},\lah_{\ess}])$ is conjugate to item 6 or 7 in Table \ref{table: h_ess nontrivial}, one uses \cite[Table 2]{MR2362821} to see that $\laa(\lag,[\lah_{\ess},\lah_{\ess}]+\laz)$ has regular elements. Note also that $$\lah_{\ess}=[\lah_{\ess},\lah_{\ess}]+\laz(\lah_{\ess})\subseteq[\lah_{\ess},\lah_{\ess}]+\laz$$ implies $$\laa(\lag,[\lah_{\ess},\lah_{\ess}]+\laz)\subseteq\laa(\lag,\lah_{\ess})=\laa(\lag,\lah).$$ The previous two sentences together show that $(\lag,\lah)$ is $\laa$-regular.

For the converse we suppose that $\lah_{\ess}\le\lah$ is not the trivial ideal. The discussion above implies that $\lah_{\ess}$ cannot be abelian, so that $[\lah_{\ess},\lah_{\ess}]\le\lah$ is a semisimple and non-trivial ideal. It then follows from Losev's setup in \cite{MR2362821} that $[\lah_{\ess},\lah_{\ess}]$ is conjugate to a pair in \cite[Table 1]{MR2362821} or \cite[Table 2]{MR2362821}. Hence there are three mutually exclusive possibilities: $(\lag,[\lah_{\ess},\lah_{\ess}])$ is conjugate to a pair in:  
\begin{itemize}
\item[(a)] \cite[Table 1]{MR2362821}, but not to one in \cite[Table 2]{MR2362821};
\item[(b)] \cite[Table 1]{MR2362821} and \cite[Table 2]{MR2362821};
\item[(c)] \cite[Table 2]{MR2362821}, but not to one in \cite[Table 1]{MR2362821}.
\end{itemize}
In each instance, we simply use Losev's tables to inspect all possible Cartan spaces $\laa(\lag,\lah)$ and determine whether each has a regular element. 

We first suppose that (a) holds. Then $(\lag,\lah)$ is $\laa$-regular precisely when $(\lag,\lah_{\ess})$ is conjugate to one of the items 2 (with $k=n/2,(n+1)/2$), 6 (with $n=4$), 7 or 21 from \cite[Table 1]{MR2362821}. These pairs constitute the first five lines of Table \ref{table: h_ess nontrivial}.

Now suppose that (b) holds. Then $(\lag,[\lah_{\ess},\lah_{\ess}])$ is conjugate to one of the items 1, 2 (with $n/2<k\le n-2$), 10 or 19 from \cite[Table 1]{MR2362821}. A case-by-case examination reveals that $(\lag,[\lah_{\ess},\lah_{\ess}])$ is not $\laa$-regular, i.e. $\laa(\lag, [\lah_{\ess},\lah_{\ess}])\cap\lag_{\reg}=\emptyset$. It then follows from the inclusion $\laa(\lag,\lah_{\ess})\subseteq\laa(\lag, [\lah_{\ess},\lah_{\ess}])$ that $\laa(\lag,\lah_{\ess})\cap\lag_{\reg}=\emptyset$. Since $\laa(\lag,\lah_{\ess})=\laa(\lag,\lah)$, this means that $(\lag,\lah)$ is not $\laa$-regular. 

We last suppose that (c) holds, in which case $(\lag,[\lah_{\ess},\lah_{\ess}])$ is conjugate to item 6 or 7 in Table \ref{table: h_ess nontrivial}. As argued above, the pair $(\lag,\lah)$ is necessarily $\laa$-regular.
\end{proof}

For the last preliminary topic, let $H$ be any reductive subgroup of our connected semisimple group $G$. The coordinate ring $\bbC[G/H]$ then decomposes into certain irreducible, highest-weight $G$-modules, and the highest weights appearing in this decomposition are the so-called \textit{spherical weights}. These weights form a finitely generated semigroup $\Lambda_{+}(G,H)$. With this in mind, we record the following immediate consequence of  \cite[Proposition 5.14]{TimashevBook}. 

\begin{lemma}\label{lemma: weight semigroup}
If $H$ is any closed, reductive subgroup of $G$, then $\mathfrak{a}(\mathfrak{g},\mathfrak{h})^*$ is spanned by $\Lambda_{+}(G,H)$.
\end{lemma}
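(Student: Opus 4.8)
The plan is to deduce Lemma \ref{lemma: weight semigroup} directly from the citation \cite[Proposition 5.14]{TimashevBook}, bridging the small gap between the two statements. First I would recall the precise content of that proposition: for a quasi-affine homogeneous space $G/H$, the weight lattice $\Lambda_{G/H}$ (in the sense of \ref{subsection: image of moment map}) coincides with the group generated by the weight semigroup $\Lambda_{+}(G,H)$, i.e. the semigroup of highest weights $\lambda$ such that $V_\lambda^*$ (equivalently $V_\lambda$, after possibly replacing $\lambda$ by $-w_0\lambda$) occurs in $\bbC[G/H]$. Here one uses that $G/H$ is affine — since $H$ is reductive — so the coordinate ring $\bbC[G/H]$ is defined and its $G$-module structure is governed by $\Lambda_{+}(G,H)$.

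The key step is then purely formal. By definition $\mathfrak{a}(\mathfrak{g},\mathfrak{h})^* = \Lambda_{G/H}\otimes_{\bbZ}\bbC$, and since $\Lambda_{G/H}$ is the \emph{group} generated by the semigroup $\Lambda_{+}(G,H)$, tensoring with $\bbC$ (or equivalently passing to the $\bbC$-span) kills the distinction between the semigroup and the group it generates: the $\bbC$-span of a subset of a lattice equals the $\bbC$-span of the subgroup it generates. Hence $\mathfrak{a}(\mathfrak{g},\mathfrak{h})^* = \vspan_{\bbC}\Lambda_{+}(G,H)$, which is the assertion. I would phrase this as: "Since $H$ is reductive, $G/H$ is affine and in particular quasi-affine, so \cite[Proposition 5.14]{TimashevBook} applies and gives that $\Lambda_{+}(G,H)$ generates the lattice $\Lambda_{G/H}$ as a group. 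Tensoring with $\bbC$ and recalling $\mathfrak{a}(\mathfrak{g},\mathfrak{h})^* = \Lambda_{G/H}\otimes_{\bbZ}\bbC$, we conclude that $\mathfrak{a}(\mathfrak{g},\mathfrak{h})^*$ is spanned by $\Lambda_{+}(G,H)$."

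There is essentially no obstacle here; the only thing requiring minor care is the compatibility of conventions — the excerpt defines $\Lambda_Y$ via $B$-semi-invariant \emph{rational} functions, whereas the spherical-weight semigroup is defined via the $G$-module decomposition of the regular functions $\bbC[G/H]$. These agree precisely because $G/H$ is affine (so that every $B$-semi-invariant rational function is, up to scaling, a ratio of highest-weight vectors in $\bbC[G/H]$, and the highest weights of $\bbC[G/H]$ generate $\Lambda_{G/H}$ as a group), and this is exactly the content of \cite[Proposition 5.14]{TimashevBook}. So the proof is a one-line invocation of the cited proposition together with the observation that $\vspan_{\bbC}$ does not see the passage from a semigroup to its generated group.
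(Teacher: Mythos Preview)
Your proposal is correct and matches the paper's approach exactly: the paper simply records the lemma as an immediate consequence of \cite[Proposition 5.14]{TimashevBook}, and your write-up spells out precisely why that citation suffices (affineness of $G/H$ from reductivity of $H$, the identification $\Lambda_{G/H}=\langle\Lambda_{+}(G,H)\rangle$, and tensoring with $\bbC$).
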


\subsection{The classifications}\label{Subsection: The classification}

We maintain the notation used in \ref{Subsection: The Cartan space of a homogeneous affine space}, and now address the classification of $\mathfrak{a}$-regular pairs $(G,H)$ (equivalently, $\laa$-regular pairs $(\lag,\lah)$) in each of the following three cases: $H$ is a Levi subgroup \ref{sss: Levi}, $H$ is symmetric \ref{sss: sym}, and $H$ is simultaneously reductive, spherical, and non-symmetric \ref{sss: sph}. In each case, we reduce to the classification of strictly indecomposable, $\laa$-regular pairs. We list all conjugacy classes of such pairs in each of the cases \ref{sss: sym} and \ref{sss: sph}, where the notion of conjugacy class comes from Definition \ref{Definition: Conjugate}. 

\begin{remark}\label{Remark: Division into cases}
We emphasize that the classification of strictly indecomposable pairs works differently in each of the above-mentioned cases. In the case of Levi subgroups $H\subseteq G$, the classification is almost entirely based on Losev's work \cite{MR2362821}. This is in contrast to the case of symmetric subgroups, in which we appeal to representation-theoretic results about symmetric spaces. Several of these results are not applicable to the case of a reductive spherical $H\subseteq G$, for which we instead harness the works of Brion \cite{MR906369}, Kr\"amer \cite{MR528837}, and Mikityuk \cite{MR842398}.
\end{remark}

\begin{remark}
Note that every symmetric subgroup of $G$ is reductive and spherical (see \cite[Theorem 26.14]{TimashevBook}). The techniques and arguments in \ref{sss: sph} thereby imply the classification results in \ref{sss: sym}. Despite this, we believe that the representation-theoretic approach taken in \ref{sss: sym} is independently interesting and worthwhile. Further distinctions between \ref{sss: sym} and \ref{sss: sph} are discussed in Remark \ref{Remark: Indecomposable} and Example \ref{Example: Indecomposable}.  
\end{remark}

\subsubsection{Levi subgroups.}\label{sss: Levi} 

Assume that $H$ is a Levi subgroup of $G$, by which we mean that $H$ is a Levi factor of a parabolic subgroup $P\subseteq G$. It follows that $\lah$ is a Levi factor of a parabolic subalgebra $\lap\subseteq\lag$. Now let $\lag=\lag_1\oplus
\cdots\oplus\lag_n$ be the decomposition of $\lag$ into its simple ideals $\lag_1,\ldots,\lag_n$. The parabolic subalgebra $\lap$ is then a sum of parabolic subalgebras $\lap_i\subseteq\lag_i$ for $i=1,\ldots,n$, implying that $\lah$ is a sum of Levi factors $\lah_i\subseteq\lap_i$, $i=1,\ldots,n$. An application of Lemma \ref{lem: indecomposable} then shows that $(\lag,\lah)$ is $\laa$-regular if and only if $(\lag_i,\lah_i)$ is $\laa$-regular for all $i=1,\ldots,n$. It therefore suffices to assume that $\lag$ is simple. Our classification then takes the following form.

\begin{proposition}\label{Proposition: Dynkin} Assume that $\mathfrak{g}$ is simple and that $\mathfrak{h}$ is a Levi subalgebra of $\mathfrak{g}$ with $\lah_{\ess}\neq\{0\}$. The pair $(\mathfrak{g},\mathfrak{h})$ is then $\mathfrak{a}$-regular if and only if it is conjugate to a pair in Table \ref{table: Levi}.
In this table, $\lal_{2}$ is any reductive subalgebra of $\lasl_{2n+1}$ that satisfies the following conditions: $\mathfrak{sl}_{n+1}\cap\lal_2=\{0\}$, $\lal_2$ commutes with $\mathfrak{sl}_{n+1}$, and $\lasl_{n+1}\oplus\lal_{2}$ is a Levi subalgebra of $\lasl_{2n+1}$.\footnote{We are implicitly using the embedding $\mathfrak{sl}_{n+1}\subseteq\mathfrak{sl}_{2n+1}$ from line 6 of Table \ref{table: h_ess nontrivial}.}  
\end{proposition}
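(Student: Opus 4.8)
The plan is to combine Lemma~\ref{lemma: classification}(ii) with Losev's classification of essential semisimple subalgebras and his tables of Cartan spaces \cite{MR2362821}, specialized to the case where $\lah$ is a Levi subalgebra of the simple Lie algebra $\lag$. By Lemma~\ref{lemma: classification}(ii), since $\lah_{\ess}\neq\{0\}$, the pair $(\lag,\lah)$ is $\laa$-regular if and only if $(\lag,[\lah_{\ess},\lah_{\ess}])$ is conjugate to one of the seven entries of Table~\ref{table: h_ess nontrivial}. The task is therefore to determine, for each of these seven entries, whether $\lai=[\lah_{\ess},\lah_{\ess}]$ can arise as the derived subalgebra of the essential ideal of a \emph{Levi} subalgebra $\lah\le\lag$, and to read off the resulting list of pairs $(\lag,\lah)$ up to conjugacy.

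First I would note that a Levi subalgebra $\lah$ of $\lag$ has the form $\lah=\lat'\oplus\lah'$ where $\lah'=[\lah,\lah]$ is semisimple and $\lat'$ is a central torus, and that $\lah_{\ess}\le\lah$ is an ideal; hence $[\lah_{\ess},\lah_{\ess}]$ is a (possibly zero) sum of simple ideals of $\lah'$. So the question becomes: which of the semisimple subalgebras $\lai$ appearing in Table~\ref{table: h_ess nontrivial} occur as an ideal of the semisimple part of some Levi subalgebra of $\lag$, with $\lai$ moreover being exactly $[\lah_{\ess},\lah_{\ess}]$? For each of the seven entries one checks this directly against the Levi subalgebras of the relevant simple $\lag$ (read off from deleting nodes of the Dynkin diagram). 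Entries~5 ($\lae_6\supseteq\lasl_6$), entry~3 ($\lasp_8$), and entry~4 ($\lasp_6$) involve embeddings $\lai\subseteq\lag$ that are \emph{not} Levi (e.g. $\lasl_6\subseteq\lae_6$ as in \cite[Section 6]{MR2362821} is not a Levi subalgebra), so these cannot contribute; entries~1 and~2 ($\lasl_k\oplus\lasl_k\subseteq\lasl_{2k}$, $\lasl_k\oplus\lasl_{k-1}\subseteq\lasl_{2k-1}$) \emph{are} Levi (they come from deleting one node of the type $A$ diagram), and contribute; entries~6 and~7 ($\lasl_{n+1}\subseteq\lasl_{2n+1}$ and $\lasp_{2n}\subseteq\lasl_{2n+1}$ with their specified embeddings) require care, since there $\lah_{\ess}$ is non-semisimple and one must verify whether $\lasl_{n+1}$ (resp.\ $\lasp_{2n}$) sits inside $\lasl_{2n+1}$ as an ideal of a Levi semisimple part in the relevant way. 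This is exactly where the extra data in the statement of the Proposition enters: for entry~6 one needs $\lah=\lasl_{n+1}\oplus\lal_2$ with $\lal_2$ a reductive complement making $\lasl_{n+1}\oplus\lal_2$ a Levi subalgebra of $\lasl_{2n+1}$, and one must check using \cite[Table 2]{MR2362821} and \cite[Theorem 1.3(c)]{MR2362821} that the central torus of $\lah_{\ess}$ is large enough to force $\laa(\lag,\lah)$ to contain a regular element (this is precisely the mechanism already used in the proof of Lemma~\ref{lemma: classification}(ii), case (c)).

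Having identified which of the seven entries are compatible with $\lah$ being Levi, I would assemble the corresponding pairs $(\lag,\lah)$ into Table~\ref{table: Levi}, taking care to record each up to the conjugacy of Definition~\ref{Definition: Conjugate} (so that, e.g., the choice of which simple factors of a type $A$ Levi are ``used up'' by $\lai$ is normalized). The main obstacle I anticipate is the non-semisimple case, entries~6 and~7: here the essential subalgebra $\lah_{\ess}$ strictly contains its derived subalgebra, and $\laa$-regularity depends on the center $\laz(\lah_{\ess})$, so one cannot argue purely at the level of semisimple parts — one must track which central torus directions of the Levi $\lah$ lie in $\lah_{\ess}$, invoke Losev's center-computing algorithm, and compare $\laa(\lag,\lah_{\ess})$ with the union of root hyperplanes. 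Once the relevant Cartan spaces are computed from \cite[Table 2]{MR2362821}, the regularity check is the same root-hyperplane argument used throughout Section~\ref{Section: The spherical geometry of T*(G/H)}, so the remaining steps are routine verification.
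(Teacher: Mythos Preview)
Your overall strategy matches the paper's: invoke Lemma~\ref{lemma: classification}(ii), then determine for which entries $(\lag,\lai)$ of Table~\ref{table: h_ess nontrivial} the subalgebra $\lai$ can occur as an ideal of a Levi subalgebra of $\lag$, using the Dynkin-subdiagram criterion. The paper's proof is organized the same way.

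However, your case analysis contains a concrete error. You exclude entry~5 ($\lae_6\supseteq\lasl_6$), asserting that this embedding ``is not a Levi subalgebra''. In fact $\lasl_6$ \emph{is} the semisimple part of a Levi subalgebra of $\lae_6$: removing the branch node from the $E_6$ Dynkin diagram leaves an $A_5$ subdiagram, and the associated Levi is $\lasl_6\oplus\bbC$. The paper verifies (and the caption of Table~\ref{table: Levi} explains) that the embedding from Losev's tables is conjugate to this Levi embedding, so entry~5 \emph{does} contribute, yielding line~3 of Table~\ref{table: Levi}. With your exclusion you would miss this pair entirely.

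Conversely, you leave entry~7 ($\lasp_{2n}\subseteq\lasl_{2n+1}$) in limbo, saying it ``requires care''. It does not: the Dynkin diagram of $\lasp_{2n}$ is of type $C_n$, which is never a subdiagram of the type $A_{2n}$ diagram of $\lasl_{2n+1}$ (for $n\geq 2$), so no Levi subalgebra of $\lasl_{2n+1}$ contains $\lasp_{2n}$ as an ideal. Entry~7 is therefore excluded by the same elementary argument you used for entries~3 and~4. The only entry genuinely requiring the extra analysis with $\lal_2$ is entry~6, exactly as the paper records in line~4 of Table~\ref{table: Levi}. Once you correct the handling of entries~5 and~7, your argument coincides with the paper's.
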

\begin{table}[h!]
\begin{center}
$$ \begin{array}{c|c|c}
&\lag & \lal \\
\hline
1&\lasl_{2k}&\mathfrak{s}(\lagl_{k}\oplus\lagl_{k}) \\
2&\lasl_{2k-1} & \mathfrak{s}(\lagl_{k}\oplus\lagl_{k-1}) \\
3&\lae_{6}&\lasl_{6}\oplus\bbC\\
4&\lasl_{2n+1} & \lasl_{n+1}\oplus\lal_{2}\\
\end{array}$$
  \vspace{0.2cm}
\end{center}
\caption{Line 3 is to be understood as follows. Up to Lie algebra automorphism, $\lae_{6}$ contains precisely one subalgebra isomorphic to $\lasl_{6}\oplus\lasl_{2}$ (see \cite[Theorem 5.5, Table 12, and Theorem 11.1]{zbMATH03125755}). By choosing a Cartan subalgebra of $\mathfrak{sl}_2$ and identifying it with $\bbC$, one obtains a unique automorphism class of subalgebras in $\lae_{6}$ that are isomorphic to $\lasl_{6}\oplus\bbC$. This turns out to be a class of Levi subalgebras in $\lae_6$, and the reader may take any of these to be the subalgebra $\lal$ in line 3.}
\label{table: Levi}
\end{table}

\begin{proof}
We first assume that $(\lag,\lah)$ is conjugate to a pair in Table \ref{table: Levi}. A case-by-case analysis reveals that each pair in Table \ref{table: Levi} is $\laa$-regular, implying that $(\lag,\lah)$ is $\laa$-regular.

Conversely, assume that $(\lag,\lah)$ is $\laa$-regular. Lemma \ref{lemma: classification}(ii) then implies the existence of an ideal $\lai$ in $\lah$ for which $(\lag,\lai)$ is conjugate to a pair in Table \ref{table: h_ess nontrivial}. We will therefore begin by finding the pairs in Table \ref{table: h_ess nontrivial} for which this is possible. For each such pair $(\mathfrak{r},\laj)$, we will subsequently find the Levi subalgebras $\lal\subseteq\mathfrak{r}$ that contain $\laj$ as an ideal. Note that $(\lag,\lah)$ will then be conjugate to one of the pairs $(\mathfrak{r},\lal)$ arising in this way. It will then suffice to observe that the aforementioned pairs $(\mathfrak{r},\mathfrak{l})$ appear in Table \ref{table: Levi}.

Let $(\mathfrak{r},\laj)$ be any of the pairs appearing in lines 3,4, and 7 of Table \ref{table: h_ess nontrivial}. Observe that the Dynkin diagram of $\laj$ is not a subdiagram in the Dynkin diagram of $\mathfrak{r}$. At the same time, the Dynkin diagram of any ideal in a Levi subalgebra of $\mathfrak{g}$ must be a subdiagram in the Dynkin diagram of $\mathfrak{g}$. It follows that $(\lag,\lai)$ cannot be conjugate to $(\mathfrak{r},\laj)$ for any ideal $\lai\leq\lah$.

In light of the previous paragraph, we may restrict our attention to the pairs in lines 1,2,5, and 6 of Table \ref{table: h_ess nontrivial}. Let $(\mathfrak{r},\laj)$ be any such pair, recalling that the embedding of $\laj$ into $\mathfrak{r}$ is described in \cite[Section 6]{MR2362821} (cf. the caption of Table \ref{table: h_ess nontrivial}). This description is easily seen to imply that $\laj$ is an ideal in a Levi subalgebra of $\mathfrak{r}$. If $(\mathfrak{r},\mathfrak{j})$ is in one of lines 1,2, and 5 from Table \ref{table: h_ess nontrivial}, then the Dynkin diagram of $\laj$ uniquely determines a Levi subalgebra $\lal\subseteq\mathfrak{r}$ that contains $\laj$ as an ideal. The pair $(\mathfrak{r},\lal)$ is recorded in Table \ref{table: Levi}. If $(\mathfrak{r},\mathfrak{j})$ is in line 6 from Table \ref{table: h_ess nontrivial}, i.e. $\mathfrak{r}=\mathfrak{sl}_{2n+1}$ and $\laj=\mathfrak{sl}_{n+1}$, then there are several Levi subalgebras $\lal\subseteq\mathfrak{r}$ that contain $\laj$ as an ideal. The Dynkin diagram of any such $\lal$ is a subdiagram in the Dynkin diagram of $\mathfrak{sl}_{2n+1}$, and it contains the Dynkin diagram of $\mathfrak{sl}_{n+1}$ as a connected component. It follows that $\lal=\mathfrak{sl}_{n+1}\oplus\lal_2$ for some reductive subalgebra $\lal_2\subseteq\mathfrak{sl}_{2n+1}$ that satisfies the desired hypotheses.
\end{proof}

\subsubsection{Symmetric subgroups}\label{sss: sym}

Using the notation established in \ref{subsection: image of moment map} and the introduction of Section \ref{Section: The hyperkahler geometry of T*(G/H)}, we assume that the subgroup $H\subseteq G$ is symmetric. This means that $H$ is an open subgroup of $G^{\theta}$, the subgroup of fixed points of an involutive algebraic group automorphism $\theta:G\to G$. It follows that $(\lag,\lah)$ is a symmetric pair, i.e. $\lah$ coincides with the set of $\theta$-fixed vectors $\lag^{\theta}\subseteq\lag$ for the corresponding involutive Lie algebra automorphism $\theta:\lag\rightarrow\lag$.

\begin{lemma}\label{Lemma: Symmetric}
If $\lah$ is any reductive subalgebra of $\lag$, then $(\lag,\lah)$ is a symmetric pair if and only if there exist strictly indecomposable symmetric pairs $(\lag_i,\lah_i)$, $i\in\{1,\ldots,n\}$, such that $\lag_i$ (resp. $\lah_i$) is an ideal in $\lag$ (resp. $\lah$) for all $i$ and $$(\mathfrak{g},\mathfrak{h})=\bigg(\bigoplus_{i=1}^n\mathfrak{g}_i,\bigoplus_{i=1}^n\mathfrak{h}_i\bigg).$$ 
\end{lemma}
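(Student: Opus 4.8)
The plan is to prove both implications by reducing to structural facts about symmetric pairs. The backward implication is essentially immediate: if $(\lag,\lah)=\bigoplus_i(\lag_i,\lah_i)$ with each $(\lag_i,\lah_i)$ a symmetric pair, say $\lah_i = \lag_i^{\theta_i}$ for an involution $\theta_i:\lag_i\to\lag_i$, then $\theta := \bigoplus_i\theta_i$ is an involutive automorphism of $\lag$ with $\lag^\theta = \bigoplus_i\lag_i^{\theta_i} = \lah$, so $(\lag,\lah)$ is symmetric. (One should note that strict indecomposability of the summands is not used here; it is part of the conclusion, not a hypothesis, in this direction.)

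For the forward implication, suppose $(\lag,\lah)$ is a symmetric pair, $\lah = \lag^\theta$. First I would invoke Remark \ref{Remark: Decomposition remark}: write $(\lag,\lah) = \bigoplus_{i=1}^n(\lag_i,\lah_i)$ with each $(\lag_i,\lah_i)$ indecomposable, $\lag_i\trianglelefteq\lag$ and $\lah_i\trianglelefteq\lah$. The first task is to show each summand $(\lag_i,\lah_i)$ is again a symmetric pair. The point is that $\theta$ permutes the simple ideals of $\lag$, and — because $\lah = \lag^\theta$ is a \emph{subalgebra} of $\lag$ containing no ``diagonal'' contribution that would link distinct $\lag_i$ — one checks that $\theta$ must in fact preserve each $\lag_i$ occurring in the indecomposable decomposition. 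Concretely: if $\lag' \oplus \lag''$ is a $\theta$-stable sum of two simple ideals swapped by $\theta$, then $(\lag'\oplus\lag'')^\theta$ is a diagonally embedded copy of $\lag'$, and the pair $(\lag'\oplus\lag'',(\lag'\oplus\lag'')^\theta)$ is indecomposable; if instead $\theta(\lag_i)=\lag_i$ for a simple ideal, the pair $(\lag_i,\lag_i^\theta)$ is a symmetric pair in the usual sense. In either case $\theta$ restricts to an involution of $\lag_i$ with $\lag_i^{\theta|_{\lag_i}} = \lah_i$, so $(\lag_i,\lah_i)$ is symmetric. The remaining task is to pass from ``indecomposable'' to ``strictly indecomposable'': we must check that for a symmetric pair, indecomposability of $(\lag,\lah)$ already forces indecomposability of $(\lag,[\lah,\lah])$. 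This uses that in a symmetric pair $\lah$ is reductive with $[\lah,\lah]$ semisimple and $\lah = [\lah,\lah]\oplus\laz(\lah)$, and that the center $\laz(\lah)$ is built from the $\pm1$-eigenspace decomposition in a way that does not introduce a new splitting of $\lag$ compatible with a splitting of $[\lah,\lah]$ but not of $\lah$; I would argue this by a direct eigenspace bookkeeping, or cite the relevant statement in \cite[Section 26]{TimashevBook} or \cite[Section 10]{TimashevBook}.

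The main obstacle I anticipate is the verification that $\theta$ respects the indecomposable decomposition of $(\lag,\lah)$, i.e. controlling how an arbitrary involution can interleave with an arbitrary decomposition of the fixed-point subalgebra into ideals. The subtlety is that a priori the indecomposable decomposition of $(\lag,\lah)$ (which only knows that $\lah_i\trianglelefteq\lah$ sits inside $\lag_i\trianglelefteq\lag$) need not be the ``obvious'' $\theta$-eigenspace-adapted one; one has to show these coincide. I would handle this by first producing the canonical $\theta$-adapted decomposition — group the simple ideals of $\lag$ into $\theta$-orbits, getting $\lag = \bigoplus_j\lag^{(j)}$ with each $\lag^{(j)}$ $\theta$-stable and $(\lag^{(j)},(\lag^{(j)})^\theta)$ indecomposable — and then appeal to uniqueness of the indecomposable decomposition (again via Remark \ref{Remark: Decomposition remark}/Definition \ref{Definition: decomposable}) to conclude it agrees with the $(\lag_i,\lah_i)$. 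Everything else is routine bookkeeping with reductive Lie algebras.
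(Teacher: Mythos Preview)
Your approach is correct and rests on the same key observation as the paper: grouping the simple ideals of $\lag$ into $\theta$-orbits produces a decomposition $(\lag,\lah)=\bigoplus_j(\lag^{(j)},(\lag^{(j)})^\theta)$ into symmetric pairs. The paper, however, skips your detour through Remark~\ref{Remark: Decomposition remark} and the uniqueness-of-decomposition argument entirely: it simply constructs this $\theta$-adapted decomposition directly and then verifies that each piece is strictly indecomposable. For that last step the paper is also more concrete than your proposed ``eigenspace bookkeeping'': when $\lag^{(j)}$ is a single simple ideal, strict indecomposability is automatic; when $\lag^{(j)}=\las\oplus\las$ with $\theta$ the swap, $(\lag^{(j)})^\theta=\mathrm{diag}(\las)$ is simple and hence equals its own derived subalgebra, so strict indecomposability reduces to indecomposability, and the diagonal cannot lie inside a proper ideal of $\las\oplus\las$. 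Your appeal to uniqueness of the indecomposable decomposition is therefore unnecessary for the lemma as stated (existence suffices), and in any case is not actually supplied by Remark~\ref{Remark: Decomposition remark}.
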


\begin{proof}
The backward implication follows from the following simple observation: if $(\lag_1,\lah_1)$ and $(\lag_2,\lah_2)$ are symmetric pairs, then $(\lag_1\oplus\lag_2,\lah_1\oplus\lah_2)$ is also a symmetric pair.

To prove the forward implication, assume that $(\lag,\lah)$ is a symmetric pair and let $\theta:\lag\rightarrow\lag$ be an involutive automorphism for which $\lah=\lag^{\theta}$. Note that each simple ideal of $\lag$ is either $\theta$-stable or interchanged by $\theta$ with a different simple ideal. We may therefore identify $\lag$ with
$$\lag_1^{\oplus 2}\cdots\oplus\lag_s^{\oplus 2}\oplus\lag_{s+1}\cdots\oplus\lag_{s+t}$$ for simple Lie algebras $\lag_1,\ldots,\lag_{s+t}$, such that $\theta$ becomes the following map: $(x,y)\mapsto (y,x)$ on each summand $\lag_i^{\oplus 2}$ and $x\mapsto\theta_j(x)$ on each summand $\mathfrak{g}_j$, where $\theta_j:\lag_j\mapsto\lag_j$ is an involutive automorphism. It follows that
$$\lah=\lag^{\theta}=\mathrm{diag}(\lag_1)\oplus\cdots\oplus\mathrm{diag}(\lag_s)\oplus\lag_{s+1}^{\theta_{s+1}}\oplus\cdots\oplus\lag_{s+t}^{\theta_{s+t}},$$ where $\mathrm{diag}(\mathfrak{g}_i):=\{(x,y)\in\mathfrak{g}_i^{\oplus 2}:x=y\}$ for all $i\in\{1,\ldots,s\}$.

In light of the above, it suffices to prove that the symmetric pairs $(\lag_i^{\oplus 2},\mathrm{diag}(\lag_i))$ and $(\lag_j,\lag_j^{\theta_j})$ are strictly indecomposable for all $i\in\{1,\ldots,s\}$ and $j\in\{s+1,\ldots,s+t\}$. The strict indecomposability of the latter pair follows from the fact that $\lag_j$ is simple. Now observe that the simplicity of $\lag_i\cong\mathrm{diag}(\lag_i)$ implies that $(\lag_i^{\oplus 2},[\mathrm{diag}(\lag_i),\mathrm{diag}(\lag_i)])=(\lag_i^{\oplus 2},\mathrm{diag}(\lag_i))$. It follows that $(\lag_i^{\oplus 2},\mathrm{diag}(\lag_i))$ is strictly indecomposable if and only if it is indecomposable. However, since $\mathrm{diag}(\lag_i)$ is simple, the decomposability of $(\lag_i^{\oplus 2},\mathrm{diag}(\lag_i))$ would entail $\mathrm{diag}(\lag_i)$ being contained in a proper ideal of $\lag_i^{\oplus 2}$. This is not possible, meaning that $(\lag_i^{\oplus 2},\mathrm{diag}(\lag_i))$ is indeed strictly indecomposable. The proof is complete.
\end{proof}

\begin{remark}\label{Remark: Indecomposable}
One immediate consequence is that every indecomposable symmetric pair $(\lag,\lah)$ is strictly indecomposable. This is not true of an arbitrary reductive spherical pair $(\lag,\lah)$ (see Example \ref{Example: Indecomposable}).
\end{remark}

Together with Lemma \ref{lem: indecomposable}, Lemma \ref{Lemma: Symmetric} reduces the classification of $\laa$-regular symmetric pairs to the classification of $\laa$-regular, strictly indecomposable symmetric pairs. Let $(\lag,\lah)$ be a pair of the latter sort, and let $(G,H)$ denote an associated pair of groups. Let us also consider an involutive automorphism $\theta:\lag\rightarrow\lag$ satisfying $\lah=\lag^{\theta}$. This forms part of the eigenspace decomposition $\lag=\lah\oplus\laq$, where $\mathfrak{q}\subseteq\mathfrak{g}$ is the $-1$-eigenspace of $\theta$. One can then find a maximal abelian subspace $\mathfrak{c}\subseteq\laq$, meaning that $\mathfrak{c}$ is a vector subspace of $\mathfrak{q}$ that is maximal with respect to the following condition: $\mathfrak{c}$ is abelian and consists of semisimple elements in $\lag$ (cf. \cite[Corollary 37.5.4]{Tauvel}).

Now recall our discussion of the generic stabilizer $H_{*}\subseteq H$ and its Lie algebra $\lah_{*}\subseteq\lah$ (see \ref{subsection: The cotangent bundle of a homogeneous space} and \ref{Subsection: The Cartan space of a homogeneous affine space}).  At the same time, let $\laz_{\lah}(Y)$ denote the subalgebra of all $x\in\lah$ that commute with every vector in a subset $Y\subseteq\lag$.

\begin{lemma}\label{lemma: m = h_*}
We have $\lah_{*}=\laz_{\lah}(\lac)$.
\end{lemma}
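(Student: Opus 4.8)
The plan is to identify the generic stabilizer $H_*$ for the $H$-action on $\lah^\perp$ with the centralizer $\laz_\lah(\lac)$ by a direct geometric argument about stabilizers, using the symmetric-space structure. First I would use the involution $\theta$ to describe $\lah^\perp$ concretely. Since $\lag = \lah \oplus \laq$ is the $\pm 1$-eigenspace decomposition of $\theta$ and the Killing form pairs $\lah$ with $\lah$ and $\laq$ with $\laq$ (the two eigenspaces are Killing-orthogonal), the annihilator $\lah^\perp$ is exactly $\laq$ as an $H$-representation. So the task becomes: show that the generic $H$-stabilizer on $\laq$ equals $\laz_\lah(\lac)$, where $\lac \subseteq \laq$ is a maximal abelian subspace of semisimple elements.

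Next I would invoke the structure theory of symmetric spaces: every semisimple element of $\laq$ is $H$-conjugate into $\lac$, and more to the point the set of elements of $\laq$ whose centralizer in $\lag$ meets $\laq$ in a space of minimal dimension — the ``regular'' elements of $\laq$ — is open and dense in $\laq$, and each such element is $H$-conjugate to a regular element of $\lac$ (cf. the Kostant--Rallis theory, or \cite[Corollary 37.5.4]{Tauvel} and its neighbourhood). For a regular $c \in \lac$ one has $\laz_\laq(c) = \lac$, and correspondingly $\laz_\lah(c) = \laz_\lah(\lac)$: indeed $\laz_\lah(\lac) \subseteq \laz_\lah(c)$ trivially, while conversely an element of $\lah$ commuting with the regular element $c$ must centralize all of $\lac$ because $\lac$ is contained in the centralizer of $c$, which is a $\theta$-stable reductive subalgebra whose $\laq$-part is precisely $\lac$. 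Thus on the dense open set of regular elements of $\laq$ the $H$-stabilizer of $c$ is $Z_\lah(\lac)$ for every such $c$, which is exactly the defining property of the generic stabilizer. Taking Lie algebras gives $\lah_* = \laz_\lah(\lac)$.

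The step I expect to be the main obstacle is the passage from ``stabilizer of a single regular $c \in \lac$'' to ``generic stabilizer'', which requires knowing that the regular elements of $\laq$ form a single dense $H$-orbit-type stratum with stabilizer conjugate to $Z_\lah(\lac)$, not merely that some dense subset has this stabilizer. This is where one must cite the symmetric-space analogue of Kostant's regularity theory: the regular locus $\laq_{\reg}$ is open dense, $H \cdot \lac_{\reg} = \laq_{\reg}$, and all the groups $Z_\lah(c)$ for $c \in \lac_{\reg}$ are conjugate in $H$ (they are in fact all equal to $Z_\lah(\lac)$, since $Z_H(\lac)$ normalizes $\lac$ and acts on it, while $c$ ranges over a Zariski-dense subset of $\lac$). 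One also needs the orthogonality $\lah \perp \laq$ under the Killing form to make the identification $\lah^\perp = \laq$; this is standard for symmetric pairs and can be quoted. Assembling these facts, the lemma follows without further computation.
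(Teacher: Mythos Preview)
Your proposal is correct and follows essentially the same route as the paper. The paper also identifies $\lah^{\perp}\cong\laq$ via the Killing form, uses that $H\cdot\lac$ is dense in $\laq$ to reduce to stabilizers of points in $\lac$, and then invokes the fact that $\laz_{\lah}(c)=\laz_{\lah}(\lac)$ for $c$ in an open dense subset of $\lac$; the only cosmetic difference is that the paper cites these two density statements directly from \cite{Tauvel} rather than phrasing them in the Kostant--Rallis language of $\laq$-regular elements.
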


\begin{proof}
The $H$-module isomorphisms $\mathfrak{h}^{\perp}\cong\mathfrak{g}/\mathfrak{h}\cong\mathfrak{q}$ imply that $H_{*}$ is a generic stabilizer for the $H$-action on $\laq$. Note also that $H\cdot\lac\subseteq\laq$ is dense (see \cite[Lemma 38.7.1]{Tauvel}) and constructible. It follows that $\lah_{*}=\laz_{\lah}(c)$ for all $c$ in an open dense subset $\lac_{1}\subseteq\lac$.
At the same time, there exists an open dense subset $\lac_{2}\subseteq\lac$ with the property that $\laz_{\lah}(c)=\laz_{\lah}(\lac)$ for all $c\in\lac_{2}$ (see the proof of \cite[Proposition 38.4.5]{Tauvel}). 
Hence $\lah_{*}=\laz_{\lah}(\lac)$, as desired.
\end{proof}

\begin{remark}With Remark \ref{Remark: Generic stabilizer} in mind, one can phrase Lemma \ref{lemma: m = h_*} as follows: $\laz_{\lah}(\lac)$ represents the conjugacy class of Lie algebras of generic stabilizers for the $H$-action on $\lah^{\perp}$.\end{remark}

We now explain the classification of $\laa$-regular, strictly indecomposable symmetric pairs $(\lag,\lah)$.  Up to conjugation (see Definition \ref{Definition: Conjugate}), such pairs are parametrized by Satake diagrams (see \cite[Section 26.5]{TimashevBook}). The Satake diagram for a symmetric pair $(\lag,\lah)$ is the Dynkin diagram of $\lag$, together with extra decorations that encode the associated involution $\theta:\lag\to\lag$.
Part of this decoration consists of painting some of the nodes black; these are precisely the simple roots of $\laz_{\lah}(\lac)$.
At the same time, recall that Lemma \ref{lemma: m = h_*} identifies $\laz_{\lah}(\lac)$ with $\lah_{*}$. Appealing to Corollary \ref{cor: gen stab}, we see that the $\laa$-regularity of $(\lag,\lah)$ is equivalent to the Satake diagram of $(\lag,\lah)$ having none of its nodes painted black. This leads to the following result.

\begin{proposition}\label{Proposition: Symmetric proposition}
A strictly indecomposable symmetric pair $(\lag,\lah)$ is $\laa$-regular if and only if it is conjugate to one of the pairs in Table \ref{table: symmetric}. In this table, $\las$ denotes any simple Lie algebra.
\end{proposition}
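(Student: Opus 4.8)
The plan is to deduce the result from the criterion already established just before the statement: by Corollary \ref{cor: gen stab} together with Lemma \ref{lemma: m = h_*}, a strictly indecomposable symmetric pair $(\lag,\lah)$ is $\laa$-regular if and only if $\laz_{\lah}(\lac)$ has abelian identity component, and since $\laz_{\lah}(\lac)$ is reductive this is equivalent to $\laz_{\lah}(\lac)$ being \emph{toral}, i.e.\ having no semisimple part. In Satake-diagram terms, the black nodes of the Satake diagram of $(\lag,\lah)$ are exactly the simple roots of the semisimple part of $\laz_{\lah}(\lac)$; hence $\laa$-regularity is equivalent to the Satake diagram having \emph{no} black nodes. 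So the whole proof reduces to a bookkeeping exercise: go through Berger's (or the standard E.\,Cartan) classification of strictly indecomposable symmetric pairs via Satake diagrams, and read off precisely those whose Satake diagram is all-white.

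First I would recall the precise correspondence: strictly indecomposable symmetric pairs up to conjugacy are in bijection with connected Satake diagrams, where the two families of diagrams are (a) $\las\oplus\las$ with the flip involution, whose Satake diagram is the Dynkin diagram of $\las$ doubled with arrows identifying the two copies (this has no black nodes ever), giving $(\las\oplus\las,\mathrm{diag}(\las))$; and (b) a single simple $\lag$ with an involution, whose Satake diagram is the Dynkin diagram of $\lag$ with some nodes possibly painted black and some pairs of white nodes joined by arrows. For family (a) the pair is automatically $\laa$-regular, which accounts for the ``group case'' row of Table \ref{table: symmetric}. For family (b) I would invoke the standard table of Satake diagrams (e.g.\ from \cite[Section 26.5]{TimashevBook}, or Helgason/Araki) and simply list those with no black nodes. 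These are exactly the symmetric pairs of maximal rank together with a handful of others in which the involution is still ``split enough'' in the relevant sense; concretely they include the split real forms' complexified symmetric pairs such as $(\lasl_n,\laso_n)$, $(\lasp_{2n},\lagl_n)$, $(\laso_{2n},\lagl_n)$, $(\lasl_{2n},\lasp_{2n})$, $(\laso_{p+q},\laso_p\oplus\laso_q)$ with $|p-q|\le 1$, the relevant exceptional cases ($\lae_6,\lasp_8$), $(\lae_7,\lasl_8)$, $(\lae_8,\laso_{16})$, $(\laf_4,\lasp_6\oplus\lasl_2)$, $(\lag_2,\lasl_2\oplus\lasl_2)$, plus the low-rank coincidences; I would put the resulting list into Table \ref{table: symmetric} and check line by line that each has an all-white Satake diagram and that no others do.

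The one genuinely substantive point — as opposed to pure table-reading — is the claim that the black nodes of the Satake diagram are precisely the simple roots of $\laz_{\lah}(\lac)$. I would justify this by recalling the construction of the Satake diagram: one chooses $\lac$ maximal abelian in $\laq$ consisting of semisimple elements, extends to a $\theta$-stable Cartan, and orders the roots so that roots not vanishing on $\lac$ are positive; the black nodes are by definition the simple roots vanishing identically on $\lac$, and the $\theta$-fixed (equivalently $\lac$-centralized) root spaces are exactly those spanned by such roots, so they span $\laz_{\lah}(\lac)$'s semisimple part together with $\lat\cap\lah$. Combined with Lemma \ref{lemma: m = h_*} ($\lah_*=\laz_{\lah}(\lac)$) and Corollary \ref{cor: gen stab} ($\laa$-regular $\iff$ $\lah_*^{\circ}$ abelian $\iff$ $\lah_*$ toral $\iff$ no black nodes), the characterization is immediate.

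The main obstacle I anticipate is not conceptual but organizational: making sure the table is complete and that the low-rank isomorphisms (e.g.\ $\laso_3\cong\lasl_2$, $\laso_4\cong\lasl_2\oplus\lasl_2$, $\laso_5\cong\lasp_4$, $\laso_6\cong\lasl_4$) do not cause a pair to be listed twice or a strictly-indecomposable pair to be overlooked, and that the ``$\las$ denotes any simple Lie algebra'' row correctly captures the group case without over- or under-counting. I would cross-check the final list against the known classification of symmetric spaces of maximal real rank (split symmetric spaces), since ``no black nodes'' is exactly the maximal-rank condition for the associated real symmetric space, giving an independent sanity check on Table \ref{table: symmetric}.
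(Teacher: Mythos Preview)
Your approach is exactly the paper's: use Lemma \ref{lemma: m = h_*} and Corollary \ref{cor: gen stab} to reduce $\laa$-regularity to the Satake diagram having no black nodes, then read off the answer from the standard table (the paper cites \cite[Table 26.3]{TimashevBook}). The justification you sketch for why black nodes are the simple roots of $\laz_{\lah}(\lac)$ is correct and matches the paper's one-line appeal to the definition of the Satake diagram.

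One caution on your tentative list: $(\laso_{2n},\lagl_n)$ and $(\lasl_{2n},\lasp_{2n})$ do \emph{not} correspond to split forms and do \emph{not} have all-white Satake diagrams (they come from $\mathrm{SO}^*(2n)$ and $\mathrm{SU}^*(2n)$, which have black nodes for $n\ge 2$), so they must be excluded. Conversely, you are missing $(\lasl_{p+q},\,\lasl_p\oplus\lasl_q\oplus\bbC)$ with $|p-q|\le 1$ (corresponding to $\mathrm{SU}(p,q)$) and $(\lae_6,\lasl_6\oplus\lasl_2)$ (corresponding to $\mathrm{E\,II}$), both of which do appear in Table \ref{table: symmetric}. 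Your promised line-by-line check against the Satake table would catch these, but be aware that ``no black nodes'' is strictly weaker than ``split'': it is the \emph{quasi-split} condition, so the relevant list is the quasi-split real forms, not just the split ones.
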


\begin{table}[h!]
\begin{center}
$$ \begin{array}{c|c|c}
 & \lag & \lah  \\
\hline
1 & \lasl_{n}& \laso_{n} \\
\hline
2 & \lasl_{2n+1} & \lasl_{n+1}\oplus\lasl_{n}\oplus\bbC\\
 & \lasl_{2n} & \lasl_{n}\oplus\lasl_{n}\oplus\bbC \\
\hline
3 & \laso_{2n+1} & \laso_{n+1}\oplus\laso_{n}\\
 & \laso_{2n} & \laso_{n}\oplus\laso_{n} \\
 & \laso_{2n} & \laso_{n-1}\oplus\laso_{n+1} \\
\hline
4 & \lasp_{2n} & \lagl_{n} \\
\hline
5 & \lae_{6} & \lasp_{8} \\
\hline
6 & \lae_{6} & \lasl_{6}\oplus\lasl_{2}\\
\hline
7 & \lae_{7} & \lasl_{8} \\
\hline
8 & \lae_{8} & \laso_{16} \\
\hline
9 & \laf_{4} & \lasp_{6}\oplus \lasl_{2} \\
\hline
10 & \lag_{2} & \lasl_{2}\oplus\lasl_{2} \\
\hline
11 & \las\oplus\las & \diag(\las) 
\end{array}$$
\vspace{0.2cm}
\caption{The embeddings $\lah\subseteq\lag$ are obtained from \cite[Table 1]{MR528837}, which describes each embedding on the level of algebraic groups.}
\label{table: symmetric}
\end{center}
\end{table}

\begin{proof}
Following the discussion above, we only need to list the symmetric pairs whose Satake diagrams have no black nodes. These diagrams can be found in \cite[Table 26.3]{TimashevBook}, and the result follows from an inspection of this table.
\end{proof}

\subsubsection{Reductive spherical subgroups}\label{sss: sph}
Using the notation in \ref{subsection: image of moment map} and the introduction of Section \ref{Section: The hyperkahler geometry of T*(G/H)}, we additionally assume that $(G,H)$ and $(\lag,\lah)$ are \textit{reductive spherical pairs}. This means that $H$ is a reductive spherical subgroup of $G$, i.e. $H$ is reductive and $B$ has an open orbit in $G/H$. Note that this is equivalent to $\lah$ being a reductive subalgebra of $\lag$ satisfying $\widetilde{\lab}+\lah=\lag$ for some Borel subalgebra $\widetilde{\lab}\subseteq\lag$ (see \cite[Section 25.1]{TimashevBook}).
We shall sometimes also require $(G,H)$ and $(\lag,\lah)$ to be non-symmetric, noting that the classification in \ref{sss: sym} renders this a harmless assumption.     

\begin{example}\label{Example: Indecomposable}
In contrast to the situation considered in Remark \ref{Remark: Indecomposable}, an indecomposable reductive spherical pair need not be strictly indecomposable. Set $\lag=\lasl_{n+1}\oplus\lasl_{2}$ and let $\lah\subseteq\lag$ be the image of
$$\lasl_{n}\oplus\bbC\to \lag,\quad (A,t)\mapsto (\diag(A+tI_{n},-nt),\diag(t,-t)),\quad (A,t)\in \lasl_{n}\oplus\bbC.$$
This is an indecomposable spherical pair, but it is not strictly indecomposable.
\end{example}

\begin{remark}
The strictly indecomposable reductive spherical pairs $(G,H)$ have been classified by Kr\"amer \cite{MR528837} for $G$ simple, and by Mikityuk \cite{MR842398} and Brion \cite{MR906369} for $G$ semisimple.
\end{remark}

We begin by assuming that our reductive spherical pair $(G,H)$ is strictly indecomposable. Now note that Lemma \ref{lemma: weight semigroup} allows us to investigate $\mathfrak{a}$-regularity via $\Lambda_{+}(G,H)$, and the case-by-case analyses of \cite{MR528837} thereby become important. The aforementioned reference gives explicit semigroup generators of $\Lambda_{+}(G,H)$ if $G$ is simple. If $G$ is only semisimple, then a description of $\Lambda_{+}(G,H)$ can be obtained from \cite[Table 1]{MR2779106} as follows. If $\lah$ has a trivial center, then generators of $\Lambda_{+}(G,H)$ are given in \cite[Table 1]{MR2779106}. If $\lah$ has a non-trivial center, then \cite[Table 1]{MR2779106} provides a finite set $\{(\lambda_{1},\chi_{1}),\ldots,(\lambda_{s},\chi_{s})\}$ of generators for the so-called \textit{extended weight semigroup} of $(G,H)$. The $\lambda_{i}$ are dominant weights for $G$ and the $\chi_{i}$ are characters of $H$. The weight semigroup $\Lambda_{+}(G,H)$ identifies with the collection of all points in the extended weight semigroup that have the form $(\lambda,0)$. This amounts to the following statement: a dominant weight $\lambda$ belongs to $\Lambda_{+}(G,H)$ if and only if $\lambda=\sum_{i=1}^{s}n_{i}\lambda_{i}$ for some non-negative integers $n_{i}$ satisfying $\sum_{i=1}^{s}n_{i}\chi_{i}=0$.
Together with an inspection of \cite[Table 1]{MR528837} and \cite[Table 1]{MR2779106}, this discussion yields the following fact.

\begin{lemma}\label{Lemma: Spherical lemma}
If $(G,H)$ is a strictly indecomposable reductive spherical pair that is not symmetric, then $(G,H)$ is $\laa$-regular if and only if $(\lag,\lah)$ is conjugate to a pair in Table \ref{table spherical}.
\end{lemma}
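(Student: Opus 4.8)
The plan is to convert $\laa$-regularity into an orthogonality condition on weights and then run a finite case check against the classification of strictly indecomposable reductive spherical pairs. Recall first that by Lemma~\ref{lemma: weight semigroup} the Cartan space $\laa(\lag,\lah)^*$ equals the $\bbC$-span of the spherical weight semigroup $\Lambda_{+}(G,H)$, hence is already determined by any finite set of semigroup generators of $\Lambda_{+}(G,H)$. Recall second that by Proposition~\ref{prop: regular element}, $(G,H)$ is $\laa$-regular if and only if $\laa(\lag,\lah)$ is not contained in $\ker\alpha$ for any root $\alpha\in\Delta$. Transporting this through the Killing-form identification $\lat^{*}\cong\lat$, the condition $\laa(\lag,\lah)\subseteq\ker\alpha$ becomes $(\alpha,\lambda)=0$ for every $\lambda\in\Lambda_{+}(G,H)$, where $(\cdot,\cdot)$ is the induced bilinear form on $\lat^{*}$. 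Thus $(G,H)$ is $\laa$-regular if and only if \emph{no} root of $\lag$ is orthogonal to all of the semigroup generators of $\Lambda_{+}(G,H)$; in particular, if $\mathrm{rk}_{G}(G/H)=\dim\lat$ then $\laa(\lag,\lah)=\lat$ and the pair is automatically $\laa$-regular.

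With this reformulation in hand, the argument is a case check over the classification of strictly indecomposable reductive spherical pairs $(G,H)$, which is due to Kr\"amer~\cite{MR528837} for $G$ simple and to Mikityuk~\cite{MR842398} and Brion~\cite{MR906369} for $G$ semisimple but not simple; we may discard from these lists the symmetric pairs, which are handled in~\ref{sss: sym}. For each remaining pair we read off generators of $\Lambda_{+}(G,H)$. When $G$ is simple these generators are tabulated directly by Kr\"amer. When $G$ is semisimple we extract them from \cite[Table~1]{MR2779106} by the recipe recalled above: if $\lah$ has trivial center the generators are given outright, and if $\lah$ has a nontrivial center we take the finitely many pairs $\{(\lambda_{i},\chi_{i})\}$ generating the extended weight semigroup and select the dominant weights $\lambda=\sum_{i}n_{i}\lambda_{i}$ with $n_{i}\in\mathbb{Z}_{\ge 0}$ subject to $\sum_{i}n_{i}\chi_{i}=0$. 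Since $\laa$-regularity is a conjugacy invariant in the sense of Definition~\ref{Definition: Conjugate}, and since Kr\"amer's and Mikityuk--Brion's lists are classifications up to such conjugacy, it suffices to perform the test for one representative of each conjugacy class.

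Carrying out the test, for each pair one either observes that $\mathrm{rk}_{G}(G/H)=\dim\lat$ (full rank, hence $\laa$-regular), or checks the span of the generators against the roots directly: the pairs for which some root is orthogonal to every generator are exactly those one must exclude, and the pairs that survive are precisely those collected in Table~\ref{table spherical}. I expect the main obstacle to be the bookkeeping in the semisimple case: determining $\Lambda_{+}(G,H)$ from the extended weight semigroup requires, pair by pair, solving the integral system $\sum_{i}n_{i}\chi_{i}=0$ over $\mathbb{Z}_{\ge 0}$ and re-expressing the admissible $\lambda$'s in a basis of fundamental weights before the orthogonality test can be applied. The simple case is comparatively mechanical, the only mild point being to note that Kr\"amer's tabulated weights genuinely generate the semigroup, so that their span really is $\laa(\lag,\lah)^{*}$ rather than merely a subspace of it; this is exactly what Lemma~\ref{lemma: weight semigroup} guarantees. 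As a consistency check one may also verify the numerical identity of Corollary~\ref{cor: num crit} for each pair deemed $\laa$-regular, using the ranks and complexities recorded in the same references.
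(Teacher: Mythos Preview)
Your proposal is correct and follows essentially the same approach as the paper: the paper's proof is precisely the case-by-case inspection of the Kr\"amer and \cite{MR2779106} tables via Lemma~\ref{lemma: weight semigroup}, together with the extended weight semigroup recipe you describe. Your explicit reformulation of $\laa$-regularity as the non-existence of a root orthogonal to all generators of $\Lambda_{+}(G,H)$ is a helpful gloss on what the paper leaves implicit in the phrase ``an inspection of,'' but it does not constitute a genuinely different route.
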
  

\begin{table}[h!]
\begin{center}
$$ \begin{array}{c|c|c}
 & \lag & \lah  \\
\hline
1 & \lasl_{2n+1} & \lasl_{n+1}\oplus\lasl_{n} \\
\hline
2 & \lasl_{2n+1} & \lasp_{2n}\oplus\bbC\\
\hline
3 & \lasl_{2n+1} & \lasp_{2n}\\
\hline
4 & \laso_{2n+1} & \lagl_{n} \\
\hline
5 & \lasl_{n+1}\oplus\lasl_{n} & \lasl_{n}\oplus\bbC\\
\hline
6 & \laso_{n+1}\oplus\laso_{n} & \laso_{n}\\
\hline
7& \lasl_{n}\oplus\lasp_{2m} & \lagl_{n-2}\oplus\lasl_{2}\oplus\lasp_{2m-2}\\
&  (n=3,4,5, m=1,2) &\\
\hline
8 & \lasl_{n}\oplus\lasp_{2m} & \lasl_{n-2}\oplus\lasl_{2}\oplus\lasp_{2m-2}\\
 &  (n=3,5, m=1,2) \\
\hline
9 &\lasp_{2m}\oplus\lasp_{2n} & \lasp_{2n-2}\oplus\lasp_{2}\oplus\lasp_{2m-2}\\
& (m,n=1,2) \\
\hline
10 & \lasp_{2n}\oplus\lasp_{4} & \lasp_{2n-4}\oplus\lasp_{4}\\
& (n=3,4) &\\
\hline
11 & \lasp_{2\ell}\oplus\lasp_{2m}\oplus\lasp_{2n} & \lasl_{2\ell-2}\oplus\lasp_{2m-2}\oplus\lasp_{2n-2}\oplus\lasp_{2} \\
 &(\ell,m,n=1,2) &\\
\hline
12 & \lasp_{2n}\oplus\lasp_{4}\oplus\lasp_{2m} & \lasp_{2n-2}\oplus\lasp_{2}\oplus\lasp_{2}\oplus\lasp_{2n-2} \\
& (n,m=1,2) &\\
\end{array}$$
  \vspace{0.2cm}
\caption{The embeddings $\lah\subseteq\lag$ are obtained from \cite[Table 1]{MR528837} and \cite[Theorem 0]{MR906369}, which describe each embedding on the level of algebraic groups.
}\label{table spherical}
\end{center}
\end{table}

Together with Proposition \ref{Proposition: Symmetric proposition}, this result classifies the $\laa$-regular, strictly indecomposable reductive spherical pairs. A natural next step is to study the indecomposable reductive spherical pairs that are $\laa$-regular, for which we need the following lemma. 

\begin{lemma}\label{lem: inherited regularity}
Let $(\lag,\lah)$ be a strictly indecomposable reductive spherical pair. If $(\lag,[\lah,\lah])$ is $\laa$-regular, then $(\lag,
\lah)$ is $\laa$-regular.
\end{lemma}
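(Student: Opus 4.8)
The plan is to pass from $\laa$-regularity to generic stabilizers via Corollary~\ref{cor: gen stab}, and then to compare the generic stabilizer of $H$ acting on $\lah^{\perp}$ with that of $[H,H]$.

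First I would record an elementary Killing-orthogonal decomposition
\[
[\lah,\lah]^{\perp}=\laz(\lah)\oplus\lah^{\perp}.
\]
Indeed $\laz(\lah)\perp[\lah,\lah]$, since $\langle z,[a,b]\rangle=\langle[z,a],b\rangle=0$ for $z\in\laz(\lah)$ and $a,b\in\lah$; also $\lah\cap\lah^{\perp}=\{0\}$ because $\lah$ is reductive in $\lag$; and a dimension count forces equality. Now $\laz(\lah)$ is a trivial $[H,H]$-submodule of $[\lah,\lah]^{\perp}$ (it is central in $\lah$), so a generic stabilizer for $[H,H]$ acting on $[\lah,\lah]^{\perp}$ coincides with one for $[H,H]$ acting on $\lah^{\perp}$. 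By Corollary~\ref{cor: gen stab} applied to the pair $(\lag,[\lah,\lah])$, the hypothesis that $(\lag,[\lah,\lah])$ is $\laa$-regular therefore says exactly that the identity component of the generic stabilizer of $[H,H]$ on $\lah^{\perp}$ is abelian.

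Next I would choose $y\in\lah^{\perp}$ lying in the intersection of the two dense open subsets of $\lah^{\perp}$ on which $\mathrm{Stab}_{H}(y)$ and $\mathrm{Stab}_{[H,H]}(y)$ realize the respective generic stabilizers; this intersection is nonempty, so for such $y$ the group $\mathrm{Stab}_{[H,H]}(y)^{\circ}$ is abelian and $\mathrm{Stab}_{H}(y)$ is conjugate to $H_{*}$. Set $E:=\mathrm{Stab}_{H}(y)^{\circ}$, which is conjugate to $H_{*}^{\circ}$ and hence connected reductive (generic stabilizers of linear representations are reductive, by \cite[Theorem~9.1]{TimashevBook}). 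The commutator subgroup $[E,E]$ is connected, is contained in $[H,H]$ (as $E\subseteq H$), and is contained in $E$; hence $[E,E]\subseteq(E\cap[H,H])^{\circ}\subseteq\mathrm{Stab}_{[H,H]}(y)^{\circ}$, which is abelian. On the other hand, $[E,E]$ is semisimple because $E$ is connected reductive, and a semisimple group that is abelian is trivial. Thus $[E,E]=\{e\}$, so $E$, and therefore $H_{*}^{\circ}$, is abelian; Corollary~\ref{cor: gen stab} then gives that $(\lag,\lah)$ is $\laa$-regular.

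The computations involved are all routine; the one step needing care is the comparison of generic stabilizers, i.e. ensuring that a single generic $y$ serves both $H$ and $[H,H]$ and that passing to identity components is compatible with the intersection $E\cap[H,H]$. I also note that this argument uses neither strict indecomposability nor sphericality of $(\lag,\lah)$ in an essential way; these are simply the hypotheses under which the lemma is subsequently applied.
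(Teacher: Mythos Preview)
Your argument is correct and is genuinely different from the paper's. The paper proceeds by contraposition and classification: assuming $(\lag,\lah)$ is strictly indecomposable spherical, non-semisimple, and not $\laa$-regular, it invokes the Kr\"amer--Brion--Mikityuk classification to produce a short explicit list of all such pairs, and then verifies case by case (via Tables~\ref{table: symmetric}, \ref{table spherical}, and the weight-semigroup data of \cite[Table~10.2]{TimashevBook}) that $(\lag,[\lah,\lah])$ is likewise not $\laa$-regular. Your proof bypasses the classification entirely: the decomposition $[\lah,\lah]^{\perp}=\laz(\lah)\oplus\lah^{\perp}$ identifies the generic stabilizer of $[H,H]$ on $[\lah,\lah]^{\perp}$ with that on $\lah^{\perp}$, and the inclusion $[H_{*}^{\circ},H_{*}^{\circ}]\subseteq\mathrm{Stab}_{[H,H]}(y)^{\circ}$ then forces $H_{*}^{\circ}$ abelian whenever the latter is. Your observation that neither sphericality nor strict indecomposability is used is accurate, so your argument actually proves a stronger statement; the paper's hypotheses are there only because the lemma is extracted from, and fed back into, the classification framework of \S\ref{sss: sph}. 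One small remark: the assertion $\lah\cap\lah^{\perp}=\{0\}$ is valid here because $\lah$ is the Lie algebra of a reductive algebraic subgroup (equivalently, $\lah=\lal\otimes_{\mathbb{R}}\bbC$ with $\lal$ compact, so the Killing form of $\lag$ restricts non-degenerately), but it can fail for a subalgebra that is merely ``reductive in $\lag$'' in the Bourbaki sense; you might want to phrase that step accordingly.
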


\begin{proof}
The statement is obviously true if $\lah$ is semisimple, so we assume $\lah$ to be non-semisimple. Let us prove the statement by contraposition, assuming that $(\lag,\lah)$ is a strictly indecomposable reductive spherical pair that is not $\laa$-regular. At the same time, $\lah$ being non-semisimple and an inspection of \cite[Table 26.3]{TimashevBook}, \cite[Table 1]{MR528837}, and \cite[Table 1]{MR2779106} reveal that $(\lag,\lah)$ is conjugate to one of the pairs in Table \ref{table: not regular} below. It therefore suffices to prove the following claim: if $(\lag,\lah)$ is conjugate to a pair in Table \ref{table: not regular}, then $(\lag,[\lah,\lah])$ is not $\laa$-regular.

\begin{table}[h!]
\begin{center}
$$ \begin{array}{c|c|c}
&\lag & \lah \\
\hline
1& \lasl_{p+q}\quad (|p-q|>1) & \lasl_{p}\oplus\lasl_{q}\oplus\bbC\\
\hline
2&\laso_{2n}&\lagl_{n}\\
\hline
3&\lae_{6}&\laso(10)\oplus\bbC\\
\hline
4&\lae_{7}&\lae_{6}\oplus\bbC\\
\hline
5&\lasp_{2n}\quad(n>2)&\lasp_{2n-2}\oplus\bbC\\
\hline
6&\laso_{10}&\laso_{7}\oplus\laso_{2}\\
\hline
7&\lasl_{n}\oplus\lasp_{2m}\quad(\mbox{$n>6$ or $m>2$})&\lagl_{n-2}\oplus\lasl_{2}\oplus\lasp_{2m-2}
\end{array}$$
  \vspace{0.2cm}
\end{center}
\caption{The embeddings $\lah\subseteq\lag$ are as described in \cite[Table 1]{MR528837} and \cite[Theorem 0]{MR906369}, where they are given as embeddings of the corresponding algebraic groups.}
\label{table: not regular}
\end{table}

Suppose that $(\lag,\lah)$ is conjugate to a pair in lines 1,2,3, or 7 of Table \ref{table: not regular}. It then follows that $(\lag,[\lah,\lah])$ is a strictly indecomposable reductive spherical pair, as it appears in at least one of the classifications of 
Kr\"amer \cite{MR528837}, Mikityuk \cite{MR842398}, and Brion \cite{MR906369}. At the same time, one can verify that $(\lag,[\lah,\lah])$ is not conjugate to a pair in Table \ref{table: symmetric} or Table \ref{table spherical}. Proposition \ref{Proposition: Symmetric proposition} and Lemma \ref{Lemma: Spherical lemma} then imply that $(\lag,[\lah,\lah])$ is not $\laa$-regular. 

Now assume that $(\lag,\lah)$ is conjugate to one of the remaining pairs in Table \ref{table: not regular}. Let $(G,H)$ be a corresponding reductive spherical pair of groups, and let us take $G$ to be simply-connected. We note that \cite[Table 10.2]{TimashevBook} then provides explicit generators of $\Lambda_{+}(G,[H,H])$. It is now straightforward to apply Lemma \ref{lemma: weight semigroup} and conclude that $(\lag,[\lah,\lah])$ is not $\laa$-regular.
\end{proof}

We now study the $\laa$-regular, indecomposable reductive spherical pairs. Let $(\lag,\lah)$ be an indecomposable reductive spherical pair and note that $(\lag,[\lah,\lah])$ has the following form (cf. Remark \ref{Remark: Decomposition remark}):
$$(\lag,[\lah,\lah])=\bigg(\bigoplus_{i=1}^n\lag_i,\bigoplus_{i=1}^n\widetilde{\lah}_i\bigg),$$
where for all $i\in\{1,\ldots,n\}$, $\widetilde{\lah}_i$ is a semisimple ideal in $[\lah,\lah]$, $\lag_i$ is a reductive ideal in $\lag$ containing $\widetilde{\lah}_i$, and $(\lag_i,\widetilde{\lah}_{i})$ is indecomposable. Note that each pair $(\lag_i,\widetilde{\lah}_{i})$ is actually strictly indecomposable, owing to the fact that $\widetilde{\lah}_i$ is semisimple. 

Let $\pi_{i}:\lag\to\lag_{i}$ denote the projection onto the $i^{\text{th}}$ factor and set $\laz_{i}:=\pi_{i}(\laz(\lah))$, where $\laz(\lah)$ is the center of $\lah$. It is clear that $\laz_{i}$ is reductive and that it commutes with $\widetilde{\lah}_{i}$, from which we conclude that $\overline{\lah}_{i}:=\widetilde{\lah}_{i}\oplus\laz_{i}\subseteq\lag_{i}$ is a reductive subalgebra. Now set
$$\overline{\lah}:=\bigoplus_{i=1}^n\overline{\lah}_i\subseteq\lag.$$
It follows by construction that $[\lah,\lah]\subseteq\overline{\lah}$ and $\laz(\lah)\subseteq\bigoplus_{i=1}^{n}\laz_{i}\subseteq\overline{\lah}$, implying that $\lah\subseteq\overline{\lah}$ and $\widetilde{\lab}+\lah\subseteq\widetilde{\lab}+\overline{\lah}$ for any Borel subalgebra $\widetilde{\lab}\subseteq\lag$. Since $(\lag,\lah)$ is a reductive spherical pair, the previous sentence shows $(\lag,\overline{\lah})$ to be a reductive spherical pair. Our next result establishes that $(\lag_{i},\overline{\lah}_{i})$ is a reductive spherical pair for all $i\in\{1,\ldots,n\}$.

\begin{lemma}\label{Lemma: Reduction to indecomposable}
Let $(\lag,\lah)$ be an indecomposable reductive spherical pair and use the notation from above. Then $(\lag_i,\overline{\lah}_i)$ is a strictly indecomposable reductive spherical pair for all $i\in\{1,\ldots,n\}$.  
\end{lemma}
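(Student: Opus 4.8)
The plan is to verify separately the three conditions defining a strictly indecomposable reductive spherical pair. Reductivity of $\overline{\lah}_i$ is already contained in the discussion preceding the statement: $\overline{\lah}_i=\widetilde{\lah}_i\oplus\laz_i$ with $\widetilde{\lah}_i$ semisimple and $\laz_i=\pi_i(\laz(\lah))$ a toral subalgebra of $\lag_i$ commuting with $\widetilde{\lah}_i$, so $\overline{\lah}_i$ is reductive in $\lag_i$. It therefore remains to establish sphericity and strict indecomposability, and both will follow by bookkeeping with the ideal decomposition $\lag=\bigoplus_{i=1}^n\lag_i$.

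For sphericity I would use the fact, recorded just before the lemma, that $(\lag,\overline{\lah})$ is a reductive spherical pair; hence there is a Borel subalgebra $\widetilde{\lab}\subseteq\lag$ with $\widetilde{\lab}+\overline{\lah}=\lag$. Since $\lag=\bigoplus_{i=1}^{n}\lag_i$ is a decomposition into ideals and $\lag$ is semisimple in the present setting, the Borel subalgebra splits compatibly: $\widetilde{\lab}=\bigoplus_{i=1}^{n}\widetilde{\lab}_i$ with $\widetilde{\lab}_i:=\widetilde{\lab}\cap\lag_i$ a Borel subalgebra of $\lag_i$ (fix a Cartan subalgebra inside $\widetilde{\lab}$ and split the corresponding positive system along the simple ideals). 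By construction $\overline{\lah}=\bigoplus_{i=1}^{n}\overline{\lah}_i$ with $\overline{\lah}_i\subseteq\lag_i$, whence
$$\bigoplus_{i=1}^{n}\lag_i=\lag=\widetilde{\lab}+\overline{\lah}=\bigoplus_{i=1}^{n}\bigl(\widetilde{\lab}_i+\overline{\lah}_i\bigr).$$
As $\widetilde{\lab}_i+\overline{\lah}_i\subseteq\lag_i$ for every $i$, comparing summands forces $\widetilde{\lab}_i+\overline{\lah}_i=\lag_i$, so that $(\lag_i,\overline{\lah}_i)$ is a reductive spherical pair.

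For strict indecomposability I would compute the derived subalgebra $[\overline{\lah}_i,\overline{\lah}_i]$. The subalgebra $\laz_i=\pi_i(\laz(\lah))$ is abelian, being a homomorphic image of the abelian Lie algebra $\laz(\lah)$, and it commutes with $\widetilde{\lah}_i$; together with $[\widetilde{\lah}_i,\widetilde{\lah}_i]=\widetilde{\lah}_i$ (which holds since $\widetilde{\lah}_i$ is semisimple), this gives $[\overline{\lah}_i,\overline{\lah}_i]=\widetilde{\lah}_i$. Therefore $(\lag_i,[\overline{\lah}_i,\overline{\lah}_i])=(\lag_i,\widetilde{\lah}_i)$, which is indecomposable by the very construction of the $\lag_i$ and $\widetilde{\lah}_i$; hence $(\lag_i,\overline{\lah}_i)$ is strictly indecomposable.

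I do not anticipate a genuine obstacle: once the already-established sphericity of $(\lag,\overline{\lah})$ and the decomposition $\overline{\lah}=\bigoplus_i\overline{\lah}_i$ are in hand, the argument is purely formal. The one step deserving explicit care is the compatibility of the Borel subalgebra $\widetilde{\lab}$ with the decomposition $\lag=\bigoplus_i\lag_i$ into ideals, which is harmless since each $\lag_i$ is semisimple here; the remaining verifications (abelianness of $\laz_i$ and $[\widetilde{\lah}_i,\widetilde{\lah}_i]=\widetilde{\lah}_i$) are immediate.
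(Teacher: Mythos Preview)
Your proof is correct and follows essentially the same route as the paper: both establish sphericity by splitting a Borel subalgebra along the ideal decomposition $\lag=\bigoplus_i\lag_i$ and using that $(\lag,\overline{\lah})$ is spherical, and both deduce strict indecomposability from $[\overline{\lah}_i,\overline{\lah}_i]=\widetilde{\lah}_i$ together with the indecomposability of $(\lag_i,\widetilde{\lah}_i)$. Your write-up is slightly more explicit in justifying $[\overline{\lah}_i,\overline{\lah}_i]=\widetilde{\lah}_i$, but the argument is the same.
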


\begin{proof}
Since $(\lag,\lah)$ is spherical, there exists a Borel subalgebra $\widetilde{\lab}\subseteq\lag$ satisfying $\widetilde{\lab}+\lah=\lag$. The decomposition $\lag=\lag_{1}\oplus\cdots\oplus\lag_{n}$ gives rise to a decomposition of the form $\widetilde{\lab}=\lab_{1}\oplus\cdots\oplus\lab_{n}$, where $\lab_{i}$ is a Borel subalgebra of $\lag_i$ for all $i\in\{1,\ldots,n\}$. Now note that $\lab_{i}+\overline{\lah}_{i}=\lag_{i}$ for all $i\in\{1,\ldots,n\}$ if and only if $\widetilde{\lab}+\overline{\lah}=\lag$. Recalling that $(\lag,\overline{\lah})$ is a reductive spherical pair, the previous sentence implies that $(\lag_i,\overline{\lah}_i)$ is a reductive spherical pair for all $i\in\{1,\ldots,n\}$.

To complete the proof, we observe that $[\overline{\lah}_{i},\overline{\lah}_{i}]=\widetilde{\lah}_{i}$ for all $i\in\{1,\ldots,n\}$. The strict indecomposability of $(\lag_i,\overline{\lah}_i)$ thus follows from the indecomposability of $(\lag,\widetilde{\lah}_{i})$.
\end{proof}

We may now relate the $\laa$-regularity of $(\lag,\lah)$ to that of $(\lag,\overline{\lah})$.

\begin{proposition}\label{prop class}
Let $(\lag,\lah)$ be an indecomposable reductive spherical pair and use the notation from above. Then $(\lag,\lah)$ is $\laa$-regular if and only if $(\lag,\overline{\lah})$ is $\laa$-regular.
\end{proposition}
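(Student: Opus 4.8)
The plan is to obtain the proposition as a bookkeeping consequence of the lemmas already established, since the substantive content is carried by Lemmas \ref{lem: indecomposable}, \ref{lem: inherited regularity}, and \ref{Lemma: Reduction to indecomposable} together with the construction of $\overline{\lah}$ preceding the statement. One implication is immediate: since $\lah\subseteq\overline{\lah}$, monotonicity of Cartan spaces gives $\laa(\lag,\overline{\lah})\subseteq\laa(\lag,\lah)$, so if $\laa(\lag,\overline{\lah})$ contains a regular element of $\lag$ then so does $\laa(\lag,\lah)$; hence $(\lag,\overline{\lah})$ being $\laa$-regular forces $(\lag,\lah)$ to be $\laa$-regular.

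For the converse, assume $(\lag,\lah)$ is $\laa$-regular. Because $[\lah,\lah]$ is a subalgebra of $\lah$, monotonicity again yields $\laa(\lag,\lah)\subseteq\laa(\lag,[\lah,\lah])$, so $(\lag,[\lah,\lah])$ is $\laa$-regular as well. Now apply Lemma \ref{lem: indecomposable} to the decomposition $(\lag,[\lah,\lah])=\bigl(\bigoplus_{i=1}^n\lag_i,\bigoplus_{i=1}^n\widetilde{\lah}_i\bigr)$ into indecomposable pairs: each $(\lag_i,\widetilde{\lah}_i)$ is $\laa$-regular. By Lemma \ref{Lemma: Reduction to indecomposable}, each $(\lag_i,\overline{\lah}_i)$ is a strictly indecomposable reductive spherical pair, and by construction $[\overline{\lah}_i,\overline{\lah}_i]=\widetilde{\lah}_i$ (the center $\laz_i$ being abelian and commuting with the semisimple ideal $\widetilde{\lah}_i$). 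Hence Lemma \ref{lem: inherited regularity} applies to $(\lag_i,\overline{\lah}_i)$ and shows that the $\laa$-regularity of $(\lag_i,[\overline{\lah}_i,\overline{\lah}_i])=(\lag_i,\widetilde{\lah}_i)$ entails the $\laa$-regularity of $(\lag_i,\overline{\lah}_i)$. Finally, $(\lag,\overline{\lah})=\bigl(\bigoplus_{i=1}^n\lag_i,\bigoplus_{i=1}^n\overline{\lah}_i\bigr)$ is again a decomposition into indecomposable pairs, so a second application of Lemma \ref{lem: indecomposable} gives that $(\lag,\overline{\lah})$ is $\laa$-regular.

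The two places requiring care — and the reason the preceding lemmas are phrased exactly as they are — are, first, that the two appeals to Lemma \ref{lem: indecomposable} are made to genuine decompositions of pairs, which is guaranteed by the setup ($\lag=\bigoplus_i\lag_i$, each $\widetilde{\lah}_i$ resp. $\overline{\lah}_i$ an ideal of $[\lah,\lah]$ resp. $\overline{\lah}$ lying in $\lag_i$, and each factor indecomposable); and second, that each $(\lag_i,\overline{\lah}_i)$ is strictly indecomposable so that Lemma \ref{lem: inherited regularity} is available for it. Neither of these is a real obstacle — both are recorded in the material preceding the proposition. The conceptual content, namely that replacing $\lah$ by the enlarged subalgebra $\overline{\lah}$ (which has the same derived subalgebra but a ``redistributed'' center) cannot destroy $\laa$-regularity, is already encoded in Lemma \ref{lem: inherited regularity}; the present proof merely transports that statement across the direct-sum decomposition of $(\lag,[\lah,\lah])$.
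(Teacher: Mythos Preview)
Your proof is correct and follows essentially the same argument as the paper: both directions rest on the monotonicity inclusion $\laa(\lag,\overline{\lah})\subseteq\laa(\lag,\lah)\subseteq\laa(\lag,[\lah,\lah])$, and the nontrivial direction is handled by passing to the decomposition into $(\lag_i,\widetilde{\lah}_i)$ via Lemma~\ref{lem: indecomposable}, upgrading each factor to $(\lag_i,\overline{\lah}_i)$ via Lemmas~\ref{Lemma: Reduction to indecomposable} and~\ref{lem: inherited regularity}, and then reassembling with Lemma~\ref{lem: indecomposable} again. Your additional commentary on why the hypotheses of the cited lemmas are met is accurate and matches what the paper records in the discussion preceding the proposition.
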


\begin{proof}
The inclusion of subalgebras $[\lah,\lah]\subseteq\lah\subseteq\overline{\lah}$ implies the inclusion of Cartan spaces $\laa(\lag,\overline{\lah})\subseteq\laa(\lag,\lah)\subseteq\laa(\lag,[\lah,\lah])$, from which we deduce the backward implication.

For the forward implication, suppose that $(\lag,\lah)$ is $\laa$-regular. The inclusion $\laa(\lag,\lah)\subseteq\laa(\lag,[\lah,\lah])$ then shows $(\lag,[\lah,\lah])$ to be $\laa$-regular, which is equivalent to all of the strictly indecomposable pairs $(\lag_{i},\widetilde{\lah}_{i})$ being $\laa$-regular (see Lemma \ref{lem: indecomposable}). Since $(\lag_{i},\overline{\lah}_{i})$ is a strictly indecomposable reductive spherical pair (see Lemma \ref{Lemma: Reduction to indecomposable}) with $[\overline{\lah}_{i},\overline{\lah}_{i}]=\widetilde{\lah}_{i}$, Lemma \ref{lem: inherited regularity} implies that $(\lag_{i},\overline{\lah}_{i})$ must be $\laa$-regular. It then follows from Lemma \ref{lem: indecomposable} that $(\lag,\overline{\lah})$ is $\laa$-regular.
\end{proof}

We now connect this discussion of $\laa$-regularity for indecomposable reductive spherical pairs to the overarching objective --- a classification of $\laa$-regular reductive spherical pairs. The following lemma is a crucial step in this direction.

\begin{lemma}\label{Lemma: New reduction to indecomposable}
If $\lah$ is any reductive subalgebra of $\lag$, then $(\lag,\lah)$ is a reductive spherical pair if and only if there exist indecomposable reductive spherical pairs $(\lag_i,\lah_i)$, $i\in\{1,\ldots,n\}$, such that $\lag_i$ (resp. $\lah_i$) is an ideal in $\lag$ (resp. $\lah$) for all $i$ and 
$$(\mathfrak{g},\mathfrak{h})=\bigg(\bigoplus_{i=1}^n\mathfrak{g}_i,\bigoplus_{i=1}^n\mathfrak{h}_i\bigg).$$
\end{lemma}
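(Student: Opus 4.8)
The plan is to mirror the proof of Lemma \ref{Lemma: Symmetric}, replacing ``symmetric pair'' by ``reductive spherical pair'' throughout and isolating the two elementary facts that make the argument run: a finite direct sum of Borel subalgebras is a Borel subalgebra of the direct sum, and conversely every Borel subalgebra of the semisimple Lie algebra $\lag$ splits along any decomposition of $\lag$ into ideals. Both halves of the equivalence then become bookkeeping with these facts.

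First I would treat the backward implication. It suffices to record the observation that if $(\lag_1,\lah_1)$ and $(\lag_2,\lah_2)$ are reductive spherical pairs then so is $(\lag_1\oplus\lag_2,\lah_1\oplus\lah_2)$: the subalgebra $\lah_1\oplus\lah_2$ is visibly reductive, and choosing Borel subalgebras $\lab_i\subseteq\lag_i$ with $\lab_i+\lah_i=\lag_i$ (which exist by the characterization of reductive sphericity recalled in \ref{sss: sph}), the subalgebra $\lab_1\oplus\lab_2\subseteq\lag_1\oplus\lag_2$ is a Borel subalgebra and satisfies $(\lab_1\oplus\lab_2)+(\lah_1\oplus\lah_2)=\lag_1\oplus\lag_2$. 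A trivial induction on $n$ then gives the claim; the hypothesis that the $(\lag_i,\lah_i)$ be indecomposable is not needed here.

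Next I would treat the forward implication. Assuming $(\lag,\lah)$ is a reductive spherical pair, I would invoke Remark \ref{Remark: Decomposition remark} to write $(\lag,\lah)=\big(\bigoplus_{i=1}^n\lag_i,\bigoplus_{i=1}^n\lah_i\big)$ with each $(\lag_i,\lah_i)$ indecomposable, $\lag_i\le\lag$ an ideal, $\lah_i\le\lah$ an ideal, and $\lah_i\subseteq\lag_i$. It then remains to see that each $(\lag_i,\lah_i)$ is reductive spherical. Reductivity of $\lah_i$ is immediate, since an ideal of the reductive Lie algebra $\lah$ is reductive. For sphericity, fix a Borel subalgebra $\widetilde{\lab}\subseteq\lag$ with $\widetilde{\lab}+\lah=\lag$; since $\lag$ is semisimple and $\lag=\bigoplus_i\lag_i$, the subalgebra $\widetilde{\lab}$ decomposes as $\widetilde{\lab}=\bigoplus_i\lab_i$ with $\lab_i:=\widetilde{\lab}\cap\lag_i$ a Borel subalgebra of $\lag_i$. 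Comparing $i^{\text{th}}$ summands in $\bigoplus_i(\lab_i+\lah_i)=\widetilde{\lab}+\lah=\lag=\bigoplus_i\lag_i$ forces $\lab_i+\lah_i=\lag_i$, so $(\lag_i,\lah_i)$ is a reductive spherical pair for all $i$, as desired.

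I do not expect a genuine obstacle; the only point requiring an explicit word is the splitting $\widetilde{\lab}=\bigoplus_i(\widetilde{\lab}\cap\lag_i)$, which follows from the fact that a Borel subalgebra of a semisimple Lie algebra is the direct sum of Borel subalgebras of its simple ideals, together with the observation that each $\lag_i$, being an ideal of $\lag$, is a sum of simple ideals of $\lag$. If one prefers to avoid even this, the splitting can instead be deduced group-theoretically from the compatibility of a Borel with the almost-direct product decomposition of $G$, but the Lie-algebra statement is cleanest here.
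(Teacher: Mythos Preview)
Your argument is correct and matches the paper's approach: the paper's proof simply invokes Remark~\ref{Remark: Decomposition remark} for the decomposition and then says the rest is ``entirely analogous to that of Lemma~\ref{Lemma: Reduction to indecomposable}'', which is precisely the Borel-splitting argument you spell out. One small point: you frame your plan as mirroring Lemma~\ref{Lemma: Symmetric}, but the mathematical content of your forward implication (splitting $\widetilde{\lab}$ along the ideals $\lag_i$ and comparing summands) is exactly the argument of Lemma~\ref{Lemma: Reduction to indecomposable}, not the involution-based reasoning of Lemma~\ref{Lemma: Symmetric}; this is only a labeling issue and does not affect correctness.
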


\begin{proof}
By virtue of Remark \ref{Remark: Decomposition remark}, one can find indecomposable pairs $(\lag_i,\lah_i)$ satisfying the above-advertised properties. The proof then becomes entirely analogous to that of Lemma \ref{Lemma: Reduction to indecomposable}. 
\end{proof}

The classification of $\laa$-regular reductive spherical pairs is now described as follows. By virtue of Lemmas \ref{lem: indecomposable} and \ref{Lemma: New reduction to indecomposable}, it suffices to classify the indecomposable reductive spherical pairs that are $\laa$-regular. We thus suppose that $(\lag,\lah)$ is any indecomposable reductive spherical pair. If $(\lag,\lah)$ is strictly indecomposable, then it is $\laa$-regular if and only if it is conjugate to a pair in Table \ref{table: symmetric} or Table \ref{table spherical}. If $(\lag,\lah)$ is not strictly indecomposable, then we consider the associated pair $(\lag,\overline{\lah})$. The $\laa$-regularity of $(\lag,\lah)$ is then equivalent to that of $(\lag,\overline{\lah})$ (see Proposition \ref{prop class}). This is in turn equivalent to every strictly indecomposable pair $(\lag_{i},\overline{\lah}_{i})$ being $\laa$-regular (see Lemma \ref{lem: indecomposable}), which can be assessed via Tables \ref{table: symmetric} and \ref{table spherical}.

\begin{remark}
One might ask about the feasibility of classifying the $\laa$-regular reductive spherical pairs $(G,H)$ satisfying $c_{G}(G/H)>0$. The complexity-one case might be tractable, largely because the papers \cite{MR2089817} and \cite{MR1708594} classify all strictly indecomposable reductive spherical pairs $(G,H)$ with $c_{G}(G/H)=1$. One can thereby determine which of the strictly indecomposable, complexity-one pairs are $\laa$-regular. In analogy with \ref{sss: sym} and \ref{sss: sph}, this might imply a classification of all reductive spherical $(G,H)$ with $c_{G}(G/H)=1$. The case of $c_{G}(G/H)>1$ remains unclear to us. 
\end{remark}

\bibliographystyle{acm} 
\bibliography{Spherical}

\end{document}